\numberwithin{equation}{section}
\newtheorem{thm}{Theorem}[section]
\newtheorem{prop}[thm]{Proposition}
\newtheorem{lem}[thm]{Lemma}
\newtheorem{dfn}[thm]{Definition}
\theoremstyle{definition} 
\newtheorem{ex}[thm]{Example}
\newtheorem{rem}[thm]{Remark}
\newcommand\ND{\newcommand}
\ND\lref[1]{Lemma~\ref{#1}}
\ND\tref[1]{Theorem~\ref{#1}}
\ND\pref[1]{Proposition~\ref{#1}}
\ND\sref[1]{Section~\ref{#1}}
\ND\ssref[1]{Subsection~\ref{#1}}
\ND\aref[1]{Appendix~\ref{#1}}
\ND\rref[1]{Remark~\ref{#1}}
\ND\cref[1]{Corollary~\ref{#1}}
\ND\eref[1]{Example~\ref{#1}}
\ND\fref[1]{Fig.\ {#1} }
\ND\lsref[1]{Lemmas~\ref{#1}}
\ND\tsref[1]{Theorems~\ref{#1}}
\ND\dref[1]{Definition~\ref{#1}}
\ND\psref[1]{Propositions~\ref{#1}}
\ND\rsref[1]{Remarks~\ref{#1}}
\ND\sssref[1]{Subsections~\ref{#1}}
\ND\esref[1]{Examples~\ref{#1}}
\ND\asref[1]{Assumption~\ref{#1}}
\newcommand{\ep}{\varepsilon}
\newcommand{\h}{\quad}
\newcommand{\dis}{\displaystyle}
\newcommand{\barD}{\overline{D}}
\newcommand{\bn}{\mathbf{n}}
\newcommand{\cV}{\mathcal{V}}
\newcommand{\cJ}{\mathcal{J}}
\newcommand{\cM}{\mathcal{M}}
\newcommand{\cD}{\mathcal{D}}
\newcommand{\cP}{\mathcal{P}}
\newcommand{\cW}{\mathcal{W}}
\newcommand{\cS}{\mathcal{S}}
\newcommand{\cT}{\mathcal{T}}
\newcommand{\clD}{\overline{D}}
\title{Topology optimization concerning the mass distribution via filtered gradient flows on the Wasserstein space}
\date{}
\author{Fumiya Okazaki\thanks{Department of School of Computing, Institute of Science Tokyo, 2 Chome-12-1 Ookayama, Meguro-ku, Tokyo 152-8552, Japan\\
Email address: \texttt{okazaki.f.660b@m.isct.ac.jp}}\and
Takayuki Yamada\thanks{Graduate School of Engineering, The University of Tokyo, Yayoi 2-11-16, Bunkyo-ku, Tokyo 113-8656, Japan\\
Email address: \texttt{t.yamada@mech.t.u-tokyo.ac.jp}}}
\begin{document}
\maketitle
\renewcommand{\thepage}{\arabic{page}}
\begin{abstract}
In this article, we formulate topology optimization problems concerning the mass distribution as minimization problems for functionals on the Wasserstein space. We relax optimization problems regarding non-convex objective functions on the Wasserstein space by using the Neumann heat semigroup and prove the existence of minimizers of relaxed problems. Furthermore, we introduce the filtered Wasserstein gradient flow and derive the error estimate between the original Wasserstein gradient flow and the filtered one in terms of the Wasserstein distance. We also construct a candidate for the optimal mass distribution for a given fixed total mass and simultaneously obtain the shape of the material by the numerical calculation of filtered Wasserstein gradient flows.
\end{abstract}

\begin{flushleft}
{\bf Keywords:} Topology optimization, mass optimization, Wasserstein space, Wasserstein gradient flow\\
{\bf MSC2020 Subject Classifications: 35Q93}
\end{flushleft}

\section{Introduction}
Structural optimization is widely used in structural design and studied from the viewpoints of both theoretical aspects and applications. Structural optimization is a problem of finding candidates of shapes of the material which minimize or maximize a certain performance. In the mathematical setting, it is formulated as an optimization problem for objective functions defined on a set of shapes with some constraints. Shape optimization and topology optimization are representatives of structural optimization and in this article, we distinguish these two terms as follows: Shape optimization refers to the problem to optimize the boundary of the shape of the material. On the other hand, topology optimization refers to the optimization problem on the space of more general sets to obtain the distribution of the material. In this sense, topology optimization deals with the most general setting regarding the space of shapes.

Recently, shape optimization has been studied from the viewpoint of optimization problems on infinite dimensional Riemannian manifolds. In \cites{MM05, MM06, MM07}, the geometry of the space of shapes defined by the quotient of embeddings of circles into the plane has been studied through several Riemannian structures introduced on those spaces. Based on those Riemannian structures, it was observed that shape optimization can be regarded as optimization problems on the shape space in \cite{Schulz14}. From this viewpoint, shape optimization has been further studied in e.g. \cites{SSW15, SSW16, Welker21}. In particular, the relation between the shape derivative in the context of the shape calculus and the gradient with respect to those metrics has been clarified.

The aim of this article is to formulate topology optimization as optimization problems on a certain kind of infinite dimensional Riemannian manifold. Since the infinite dimensional Riemannian structures employed in the previous studies mentioned above have restricted their scope to the space that consists of embeddings of a certain fixed manifold, we need to adopt another shape space in order to incorporate the change of topology such as the creation and disappearance of holes.

First of all, the space of Borel subsets in the design domain or equivalently, the space of characteristic functions of Borel subsets, is inconvenient as a space to formulate the topology optimization. In fact, topology optimization problems are essentially ill-posed, which means objective functions do not attain their minima on the space. It is due to the fact that objective functions can decrease as the subset forms the finer microstructure. In order to make the problem well-posed, the relaxation of the problem based on the homogenization method has been proposed in \cite{BK88}, which also provides the methodology to obtain a final shape with different topology from the initial state. Moreover, in \cites{Bendsoe89, BS99, YCL94, RZB92}, the simple isotropic material with penalization, which is called SIMP method in short, has been introduced. SIMP method consists in the simplified version of the homogenization method and lets us replace characteristic functions with density functions $\rho(x)$ and obtain a desired shape by the penalization by setting $p>0$ as the power for the density $\rho$. However, even if we replace characteristic functions with density functions with values in $[0,1]$, it still seems to be hard to guarantee that typical objective functions such as the mean compliance of elastic structures attain their minima in general unless the power for density functions is set to $p=1$. In order to make SIMP method viable, filtering procedures are often employed. There are several types of filtering for densities and sensitivities. For instance, in \cite{Bourdin01}, filtering by the convolution has been theoretically studied in a general setting and the existence of the minimum of the relaxed problem has been shown.

In our study, we focus on the problem to obtain the optimal distribution $\rho(x)$ of mass of the material being subject to $\dis \int \rho = M$ for a given total mass $M>0$. We call such problems mass optimization following \cite{BB01}. Then we formulate the problem as the optimization problem on the space of probability measures called the Wasserstein space. The Wasserstein space is a set of probability measures equipped with the distance function constructed from the transportation cost. It is well-known that the Wasserstein space formally admits a structure of an infinite dimensional Riemannian manifold due to the Otto calculus \cite{Villani03}. Namely, we can define the notion of tangent spaces, the Riemannian metric, the gradient and gradient flows on the Wasserstein space. While conventional topology optimization methods rely on the $L^2$-metric on function spaces, it seems to be more natural to regard a space of shapes as a manifold as observed in previous studies on shape optimization since the formal Riemannian structure on the Wasserstein space is based on the cost of the horizontal movement of distributions rather than the variation of values of density functions.

Formulations of mass optimization on the space of probability measures have already been investigated. In \cites{BBP97, BB01}, PDE-constrained mass optimization problems for the heat conductance and the mean compliance of linear elastic structures were considered and the optimality condition has been derived. In addition, the relation between the $L^1$-Monge-Kantorovich problem and mass optimization problems has been clarified. 
See also \cite{Ambrosio00} for details.

The main results of this article lie in the formulation and procedure of mass optimization on the Wasserstein space. While our formulation of mass optimization is based on the setting in \cite{BB01}, we directly construct flows of the mass distribution and apply the theory of gradient flows on the Wasserstein space. In addition, unlike \cite{BB01}, we impose penalization on density functions following the SIMP method. Due to the power of the density, objective functions are not convex in our setting. Therefore, we relax optimization problems via the convolution with the Neumann heat semigroup in a similar way to \cite{Bourdin01}. In \tsref{wellposed1} and \ref{wellposed2}, we establish the existence of solutions to those relaxed problems. We also show that this relaxation approximates the minimizers of the original objective functions in \tsref{approx1} and \ref{wellposed2}. The proofs of these results can be obtained by simple calculations using the H-convergence theory (e.g. \cite{Allaire02}). In \pref{Sensitivity}, we also observe that the Neumann heat kernel allows us to derive the Wasserstein gradient of objective functions. Methods for updating the mass distribution using Wasserstein gradient flows have been proposed in \cite{OkaYama25} and sensitivities of objective functions have been derived in several settings. However, the approach in \cite{OkaYama25} required the base space to be the whole Euclidean space and demanded the $H^2$-regularity of physical states to derive the sensitivity. We can overcome those obstacles via regularization using the Neumann heat kernel. Nevertheless, in order to obtain suitable candidates of optimal distributions, we need additional filtering for the sensitivities. We therefore introduce a filtered Wasserstein gradient flow with a parameter and provide error estimates in \tref{ErrorEstimate} in terms of the Wasserstein distance between the original gradient flow and the filtered one. Finally, we present numerical examples using the filtered Wasserstein gradient flow. We observe that even though we deal with mass optimization problems and our density functions are not restricted to values in $[0,1]$, optimal distributions obtained numerically in \sref{results} provide implications of optimal shapes.

Other than the previous research mentioned above, the phase field method based on the Cahn-Hilliard equation can also be regarded as the formulation of topology optimization on the space of probability measures. The phase field method is the method to describe the distribution of the material by a phase field function. A phase field function is a function defined on the design domain which is supposed to take its value around $1$ at points where the material exists and around $0$ on the void space. Therefore, we can relax the problem for raw subsets and replace it with the problem for the phase field function. In order to penalize the phase field function in such a way that the values of the phase field function keep separated into $0$ and $1$ in the process of optimization, a double-well potential has been added to the objective function. Then the phase field function is updated so that the so-called extended Ginzburg-Landau-type energy decreases. The Cahn-Hilliard equation is derived as the $H^{-1}$-gradient flow of the Ginzburg-Landau-type energy and applied to topology optimization in \cites{WangZhou04, Zhou05, ZhouWang06, ZhouWang07}. Contrary to the Allen-Cahn equation, which is derived as the $L^2$-gradient flow of the Ginzburg-Landau-type energy and applied to topology optimization in \cites{AC79, TNK10}, the Cahn-Hilliard equation intrinsically preserves the integral of the solution. Therefore, the Cahn-Hilliard equation can also be regarded as the flow on the space of probability measures and can be applied to the topology optimization with the total volume constraint without incorporating the Lagrange multiplier. The main differences of our formulation in this article from the method of the Cahn-Hilliard equation are regularization and penalization. In contrast to the Cahn-Hilliard equation, which is a 4th order nonlinear parabolic PDE, we do not need to consider such a high order PDE in the procedure of optimization since we focus on the mass constraint rather than the volume constraint and employ simpler regularization and penalization. Therefore, in the numerical calculation in \sref{results}, we can assume that the physical state is in the 1st order finite element space and execute the numerical calculation by the finite element method.

The outline of this paper is as follows. In \sref{transport}, we recall some basic settings and facts regarding the theory of the Wasserstein space. In most parts of this section, we refer to \cite{AGS21}. In \sref{Formulation} and following Subsections \ref{minheat} and \ref{mincomp}, we provide the formulation of two typical cases of mass optimization as optimization problems on the Wasserstein space. Then we show that the relaxed problems are well-posed and minimizers of the relaxed problems approximate solutions of the original problems in some sense. In \sref{numerics}, we derive sensitivities of objective functions and introduce the filtering procedure for the sensitivities. Then we show the error estimate regarding the filtered gradient flow. In \sref{results}, we exhibit some numerical examples of mass optimization. 

\section{Preliminary}\label{transport}
In this section, we briefly recall some basic notions regarding the Wasserstein space. Let $d \in \mathbb{N}$ and $D \subset \mathbb{R}^d$ a bounded convex domain. Denote the set of Borel probability measures on $\clD$ by $\cP(\clD)$. For a Borel measurable map $\Phi \colon \clD \to \clD$ and $\mu \in \cP(\clD)$, we denote the push-forward measure by $\Phi_{\sharp}\mu$, which is defined by
\[
\Phi_{\sharp}\mu (A):= \mu(\Phi^{-1}(A))\ \text{for}\ A\in \mathcal{B}(\clD).
\]
For $\mu, \nu \in \mathcal{P}(\clD)$, let $\mathcal{C}(\mu,\nu)$ be the set of couplings of $\mu$ and $\nu$, namely,
\[
\mathcal{C}(\mu,\nu):=\{ \pi \in \mathcal{P}(\clD \times \clD) \mid P_{1 \sharp}\pi=\mu,\ P_{2 \sharp}\pi=\nu \},
\]
where $P_i \colon \clD \times \clD \to \clD$ ($i=1,2$) is the projection to the $i$-th $D$. For $\mu, \nu \in \cP(\clD)$ and $p\in [1,\infty)$, the optimal transport problem for the $p$-th power of the Euclidean metric on $\clD$ in the sense of Kantorovich is to find
\[
I_p(\mu, \nu) := \inf \left\{ \int_{\clD \times \clD} |x-y|^p \, \pi(dxdy) \mid \pi \in \mathcal{C}(\mu, \nu) \right\}
\]
and the minimizer $\pi \in \mathcal{C}(\mu,\nu)$, which is shown to exist in the general theory. The minimizer is called the optimal coupling.
Define the $L^p$-Wasserstein distance $\mathcal{W}_p$ on $\mathcal{P}(\clD)$ by
\[
\mathcal{W}_p(\mu, \nu):= I_p(\mu, \nu)^{\frac{1}{p}}\ \text{for}\ \mu,\nu \in \cP(\clD).
\]
It is known that $\mathcal{W}_p$ is actually a distance function on $\cP(\clD)$. In this paper, we call the pair $(\cP(\clD), \cW_2)$ the $L^2$-Wasserstein space on $\clD$ or simply the Wasserstein space. Since $\clD$ is compact, the convergence of any sequence $\{\mu_n\}_{n\in \mathbb{N}}$ in $(\cP(\clD),\cW_2)$ to some $\mu$ is equivalent to the narrow convergence, that is,
\[
\lim_{n \to \infty}\int_{\clD}\phi\, d\mu_n = \int_{\clD} \phi \, d\mu
\]
for each bounded continuous function $\phi$ on $\clD$.

Next we recall a kind of infinite dimensional Riemannian structure on the Wasserstein space established by Otto \cite{Otto01}.
We say a curve $\{ \mu_t\}_{t\in [0,T]}$ on $(\cP(\clD),\cW_2)$ is absolutely continuous if there exists $f\in L^1([0,T])$ such that
\[
\mathcal{W}_2(\mu_s,\mu_t) \leq \int_s^tf(r)\, dr
\]
for all $s,t \in [0,T]$. If $\{ \mu_t\}_{t\in [0,T]}$ is absolutely continuous, for a.e. $t\in [0,T]$ the curve $\mu_t$ admits the metric derivative $|\mu'_t| \in L^1([0,T])$ defined by
\[
|\mu'_t|:= \lim_{\ep \to 0} \frac{\mathcal{W}_2(\mu_t,\mu_{t+\ep})}{|\ep|}.
\]
Absolute continuous curves on the Wasserstein space can be characterized through the continuity equation. This fact is described in \cite{AGS21} in detail for the case of the Wasserstein space $(\cP_2(\mathbb{R}^d),\cW_2)$ over the whole Euclidean space. Since we now assume that $\clD$ is a closed convex set in $\mathbb{R}^d$, a curve $\mu_t$ in $\cP(\clD)$ is absolutely continuous in $(\cP(\clD),\cW_2)$ if and only if it is absolutely continuous in $(\cP_2(\mathbb{R}^d),\cW_2)$. Therefore, absolutely continuous curves on $(\cP(\clD),\cW_2)$ can also be characterized through the continuity equation in $\cP(\clD)$. For a curve of probability measures $\{\mu_t\}_{t\in [0,T]}$ on $\cP(\clD)$ and a time-dependent Borel vector field $\theta_t\colon \clD \to \mathbb{R}^d$, the pair $(\mu_t,\theta_t)$ is said to satisfy the continuity equation in the distributional sense if
\begin{align}\label{ContiEq}
\int_0^T\int_{\clD} \left(\partial_t\phi(t,x) +\nabla_x \phi(t,x) \cdot \theta_t(x) \right)\, d\mu_t dt=0.
\end{align}
for all $\phi \in C^{\infty}_0((0,T)\times \clD)$, where
\[
C^{\infty}_0((0,T)\times \clD):=C^{\infty}_0((0,T)\times \mathbb{R}^d)|_{(0,T)\times \clD}.
\]
This is equivalent to that $t \to \langle \mu_t, \phi \rangle$ is absolutely continuous and
\[
\frac{d}{dt}\langle \mu_t, \phi \rangle = \int_{\clD} \theta_t \cdot \nabla \phi \, \mu_t(dx)
\]
for a.e. $t \in [0,T]$. Note that this formulation of the continuity equation includes the boundary condition on $\partial D$ since test functions do not necessarily take the value zero near $\partial D$. Due to Theorem 8.3.1 in \cite{AGS21}, if a curve $\mu_t$ is absolutely continuous in $(\cP(\clD),\cW_2)$ with
\begin{align}
\int_0^T|\mu'_t|\, dt<\infty,\label{integrability}
\end{align}
then there exists a Borel vector field $\theta \colon [0,T] \times \clD \to \mathbb{R}^d$ such that $\mu_t$ satisfies the continuity equation \eqref{ContiEq} with respect to $\theta$ in the distributional sense. 


Note that the uniqueness of velocity vector fields satisfying the continuity equation is not guaranteed for every absolutely continuous curve $\mu_t$ without any conditions for velocity vector fields. Indeed, for a pair $(\mu_t,\theta_t)$ satisfying the continuity equation, if we take a vector field $w_t$ on $\clD$ with $\mathrm{div}\mu_t w_t=0$, the pair $(\mu_t,\theta_t + w_t)$ also satisfies the continuity equation. However, if restricted to vector fields which satisfy the equality
\begin{align}\label{vecres}
\| \theta_t\|_{L^2(\mu_t)} \leq |\mu'_t|
\end{align}
for a.e. $t\in [0,T]$, the velocity vector field is uniquely determined by the convexity of $\| \cdot \|_{L^2(\mu_t)}$. 
The unique vector field is determined as follows: For a.e. $t \in [0,T]$, we can define the functional $\partial_t \mu_t$ on $C^{\infty}(\clD)$ by
\[
\langle \partial_t \mu_t, \phi \rangle := \frac{d}{dt} \langle \mu_t, \phi \rangle.
\]
Then by the absolute continuity of $\mu_t$, the functional $\partial_t \mu_t$ satisfies
\begin{align}\label{mutbound}
|\langle \partial_t \mu_t, \phi \rangle| \leq \| \nabla \phi \|_{L^2(\mu_t)} |\mu'_t|. 
\end{align}
See the proof of Theorem 8.3.1 in \cite{AGS21} for details. The inequality \eqref{mutbound} implies that the functional $\partial_t \mu_t$ is bounded linear functional on the weighted homogeneous Sobolev space $\dot{H}^1_{\mu_t}(\clD)$ defined by the completion of the quotient space $C^{\infty}(\clD)/\sim$ with respect to the weighted homogeneous Sobolev seminorm $\| \phi \|_{\dot{H}^1_{\mu}(\clD)}=\|\nabla \phi \|_{L^2(\mu)}$, where the equivalence relation $\sim$ is defined on $C^{\infty}(\clD)$ in such a way that for $\phi,\psi \in C^{\infty}(\clD)$, $\psi \sim \phi$ if and only if $\nabla \phi=\nabla \psi \ \mu$-a.e. Note that the space $\dot{H}^1_{\mu}(\clD)$ is isometric to
\[
\cV_{\mu}:=\overline{\{ \nabla \phi \mid \phi \in C^{\infty}(\clD) \}}^{L^2(\mu)}.
\]
Therefore, by the Riesz representation theorem, there exists $\theta_t \in \cV_{\mu_t}$ such that
\begin{align}\label{Riesz}
\langle \partial_t\mu_t, \psi \rangle_{\dot{H}^{-1}_{\mu_t}(\clD)} = \int_{\clD} \theta_t \cdot \nabla \psi \, d\mu_t
\end{align}
for all $\psi \in C^{\infty}(\clD)$ and a.e. $t\in [0,T]$. This implies that the pair $(\mu_t, \theta_t)$ satisfies the continuity equation in the distributional sense. Moreover, we can deduce from \eqref{mutbound} that the vector field $\theta_t$ satisfies \eqref{vecres}. Therefore, this $\theta_t$ is the velocity vector field associated with $\mu_t$ which is uniquely determined.
Through this procedure, for each $\mu \in \cP(\clD)$, we can regard the negative weighted homogeneous Sobolev space $\dot{H}^{-1}_{\mu}(\clD)$ as the tangent space $T_{\mu}\cP(\clD)$ at $\mu$. Based on the tangent space defined in this way, the gradient of functionals on the Wasserstein space over $\clD$ can be defined as follows.
\begin{dfn}
Let $J \colon \cP(\clD) \to \mathbb{R}$. A tangent vector $\nabla^{\cW}J[\mu] \in T_{\mu}\cP(\clD)$ is called the Wasserstein gradient at $\mu$ if for any absolutely continuous curve $\mu_t$ on $\cP(\clD)$ satisfying the continuity equation with a velocity vector field $\nabla \phi$ with $\phi \in C^{\infty}(\clD)$ near $t=0$, it holds that
\[
\left(\frac{d}{dt}\right)_{t=0}J(\mu_t)=\langle \nabla^{\cW}J[\mu], \phi \rangle_{\dot{H}^{-1}_{\mu_t}(\clD)}.
\]
\end{dfn}


\section{Formulation of mass optimization}\label{Formulation}
We provide the foundation for optimization on $\cP(\clD)$ compatible with mass optimization. Continuing from the previous section, we let $D$ be a bounded convex domain in $\mathbb{R}^d$. Let $\{p_t\}_{t\geq 0}$ be the Neumann heat semigroup on $D$, namely, the map $p_t \colon \mathcal{B}_+(\clD) \to \mathcal{B}_+(\clD)$ is defined by
\[
p_th(x):=\int_D p^r_t(x,y)h(y)\, dy,
\]
where $\mathcal{B}_+(\clD)$ is the set of non-negative Borel functions on $D$ and $p^r_t(x,y)$ is the heat kernel of reflective Brownian motion on $D$. We denote
\[
p_t(x,A):=p_t\mathbf{1}_A (x)
\]
for $A \in \mathcal{B}(\clD)$. Then since reflective Brownian motion is conservative, the Neumann heat semigroup satisfies
\[
p_t(x,D)=p_t1 =1
\]
for any $t>0$. For any fixed $\delta>0$, we set
\begin{align}
c_1&=c_1(\delta):=\inf_{D\times D}p_{\delta}^r(x,y),\label{infHK}\\
c_2&=c_2(\delta):=\sup_{D\times D}p_{\delta}^r(x,y).\label{supHK}
\end{align}
We set
\[
\cD_{\delta}:=\{ \rho(x) dx \in \cP(\clD)\mid c_1 \leq \rho \leq c_2 \}.
\]
Then the Neumann heat semigroup $p_{\delta}$ induces a map from $\cP(\clD)$ to $\cD_{\delta}$ as follows. For $\mu \in \cP(\clD)$, we define $\mu p_{\delta} \in \cP(\clD)$ by
\[
\mu p_{\delta}(A)=\int_D p_{\delta}(x,A)\, \mu(dx),\ A \in \mathcal{B}(\clD).
\]
Then $\mu p_{\delta} \in \cD_{\delta}$ for any $\mu \in \cP(\clD)$. Indeed, it holds that
\begin{align*}
\mu p_{\delta}(A)= \int_A \mu p^r_{\delta}(x)\, dx,
\end{align*}
where
\begin{align*}
\mu p^r_{\delta}(x):=\int_D p^r_{\delta}(x,y)\, \mu(dy).
\end{align*}
In addition, since $c_1\leq p^r_{\delta}(x,y) \leq c_2$ for all $x,y \in \clD$, it also holds that $c_1 \leq \mu p_{\delta}(x) \leq c_2$ for all $x \in \clD$. Hence $\mu p_{\delta} \in \cD_{\delta}$. We set
\[
\cD:=\bigcup_{\delta>0}\cD_{\delta}.
\]
We simply denote $\rho dx \in \cD$ by $\rho$. Then it holds that
\[
\rho p_t (A) = \int_A p_t \rho (x)\, dx
\]
for $A \in \mathcal{B}(\clD)$. From this, we can also check that $\cD_{\delta}$ is $p_t$-invariant for any $\delta, t>0$. Indeed, for any $\rho \in \cD_{\delta}$ and $t>0$,
\[
c_1(\delta) \leq p_t \rho \leq c_2(\delta).
\]
Thus $\rho p_t \in \cD_{\delta}$.

We assume that we are given a function $J \colon \cD \to (-\infty, \infty]$. Then we define a function $J_{\delta} \colon \cP(\clD) \to (-\infty, \infty]$ by
\begin{align}
J_{\delta}(\mu)&:=J(\mu p_{\delta}).\label{defJdelta}
\end{align}
The structural optimization problem which we would like to solve is to find a minimizer of $J$ or $J_{\delta}$ if they exist.
\begin{lem}
If $J_{\delta}$ is proper lower semi continuous with respect to the metric $\cW_2$ on $\cP(\clD)$, then $J_{\delta}$ admits a minimizer on $\cP(\clD)$.
\end{lem}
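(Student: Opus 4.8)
The plan is to apply the direct method in the calculus of variations, whose essential ingredient here is the compactness of the metric space $(\cP(\clD),\cW_2)$. First I would record that, since $D$ is a bounded domain, its closure $\clD$ is a compact subset of $\mathbb{R}^d$. On a compact base space every family of probability measures is automatically tight, so by Prokhorov's theorem $\cP(\clD)$ is relatively compact for the narrow topology; being also narrowly closed, $\cP(\clD)$ is narrowly compact. As already observed in \sref{transport}, on the compact set $\clD$ the narrow convergence coincides with convergence in $\cW_2$, and therefore $(\cP(\clD),\cW_2)$ is a compact metric space.

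With compactness in hand the argument is routine. I would set $m:=\inf_{\mu\in\cP(\clD)}J_{\delta}(\mu)$. Since $J_{\delta}$ is proper there exists some $\bar{\mu}$ with $J_{\delta}(\bar{\mu})<\infty$, so that $m<\infty$ and a minimizing sequence $\{\mu_n\}_{n\in\mathbb{N}}\subset\cP(\clD)$ with $J_{\delta}(\mu_n)\to m$ makes sense. By the compactness of $(\cP(\clD),\cW_2)$ I would pass to a subsequence $\{\mu_{n_k}\}$ converging in $\cW_2$ to some limit $\mu_\ast\in\cP(\clD)$.

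Finally I would invoke lower semicontinuity: since $J_{\delta}$ is lower semicontinuous for $\cW_2$ and $\mu_{n_k}\to\mu_\ast$ in $\cW_2$, it follows that
\[
J_{\delta}(\mu_\ast)\le \liminf_{k\to\infty}J_{\delta}(\mu_{n_k})=m.
\]
Because $J_{\delta}$ takes values in $(-\infty,\infty]$, the definition of the infimum gives the reverse inequality $J_{\delta}(\mu_\ast)\ge m$, hence $J_{\delta}(\mu_\ast)=m$ and $\mu_\ast$ is the desired minimizer; in particular this also forces $m>-\infty$.

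I do not expect any genuine difficulty in this proof. The only step requiring care is the compactness of $(\cP(\clD),\cW_2)$, and even this is standard once one uses that $\clD$ is compact and that narrow convergence and $\cW_2$-convergence agree on a compact base space. The properness hypothesis is used only to ensure that the infimum $m$ is not $+\infty$, so that a finite-valued minimizing sequence exists, whereas lower semicontinuity is precisely what converts the compactness of the minimizing sequence into an attained minimum.
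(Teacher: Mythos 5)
Your proof is correct and follows exactly the same route as the paper: the direct method, using compactness of $(\cP(\clD),\cW_2)$ (which the paper takes for granted and you justify via Prokhorov and the equivalence of narrow and $\cW_2$-convergence on the compact set $\clD$) to extract a convergent subsequence from a minimizing sequence, and then lower semicontinuity to conclude. The extra details you supply (properness guaranteeing $m<\infty$, the reverse inequality from the definition of the infimum) are all consistent with the paper's terser argument.
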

\begin{proof}
Let $\mu_n$ be a minimizing sequence in $\cP(\clD)$. Then by the compactness of $(\cP(\clD),\cW_2)$, there exists a subsequence $n_k$ such that $\mu_n \to \mu$ in $\cW_2$. Thus
\[
J_{\delta}(\mu) \leq \lim_{k\to \infty}J_{\delta}(\mu_{n_k})= \inf_{\cP(\clD)}J_{\delta}.
\]
\end{proof}
The following theorem shows that the minimization problem for $J_{\delta}$ is actually an approximation of that for $J$.
\begin{lem}\label{approxLem}
Assume that for each $\delta>0$ and sequence $\rho_n$ in $\cD_{\delta}$ which converges to some $\rho \in \cD_{\delta}$ a.e., the function $J$ satisfies
\[
\liminf_{n\to \infty}J(\rho_n) \leq J(\rho).
\]
We also assume that for any $\delta>0$, $J_{\delta}$ admits at least one minimizer $\mu_{\delta}$. Then there exists a sequence $\delta_k$ with $\dis \lim_{k \to \infty}\delta_k=0$ such that
\begin{align}\label{minimumconv}
\lim_{k \to \infty}J_{\delta_k}(\mu_{\delta_k})=\inf_{\cD}J.
\end{align}
\end{lem}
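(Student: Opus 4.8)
The plan is to prove \eqref{minimumconv} by combining a universal lower bound valid for every filtering parameter with a matching upper bound realized along a carefully selected sequence $\delta_k\to 0$. Throughout write $m_0:=\inf_{\cD}J$. First I would record the lower bound $J_{\delta}(\mu_{\delta})\geq m_0$ for every $\delta>0$. This is immediate from the definitions: by \eqref{defJdelta} we have $J_{\delta}(\mu)=J(\mu p_{\delta})$, and for every $\mu\in\cP(\clD)$ the measure $\mu p_{\delta}$ lies in $\cD_{\delta}\subseteq\cD$. Hence $J_{\delta}(\mu_{\delta})=J(\mu_{\delta}p_{\delta})$ is the value of $J$ at a point of $\cD$, so it is at least $m_0$. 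Consequently it suffices to produce $\delta_k\to 0$ along which $J_{\delta_k}(\mu_{\delta_k})\to m_0$ from above.

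For the upper bound, fix $l\in\mathbb{N}$ and choose a near-minimizer $\rho_l\in\cD$ with $J(\rho_l)<m_0+\tfrac{1}{2l}$ (if $m_0=-\infty$, instead pick $\rho_l$ with $J(\rho_l)<-l$ and argue identically). By definition of $\cD$ there is $\gamma_l>0$ with $\rho_l\in\cD_{\gamma_l}$. The key comparison is that, by minimality of $\mu_{\delta}$,
\[
J_{\delta}(\mu_{\delta})\leq J_{\delta}(\rho_l)=J(\rho_l p_{\delta})
\]
for every $\delta>0$, where $\rho_l p_{\delta}$ is the measure with density $p_{\delta}\rho_l$. Thus the whole problem reduces to making $J(\rho_l p_{\delta})$ close to $J(\rho_l)$ for suitably small heat time $\delta$.

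To achieve this I would invoke two properties of the Neumann heat semigroup. First, the $p_t$-invariance of $\cD_{\gamma_l}$ already established in the excerpt guarantees $\rho_l p_{\delta}\in\cD_{\gamma_l}$ for all $\delta>0$, so the entire family stays in the single fixed class $\cD_{\gamma_l}$. Second, since $p_t$ is a strongly continuous Markov semigroup on $L^2(D)$ with $p_0=\mathrm{id}$ and $\rho_l$ is bounded, one has $p_t\rho_l\to\rho_l$ in $L^2(D)$ as $t\to 0^+$; fixing any sequence $t_j\downarrow 0$, a subsequence converges to $\rho_l$ a.e. Now assumption (1), applied to the fixed class $\cD_{\gamma_l}$ and this a.e.-convergent sequence, yields $\liminf_j J(\rho_l p_{t_j})\leq J(\rho_l)<m_0+\tfrac{1}{2l}$. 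Since the lower limit is strictly below $m_0+\tfrac1l$, infinitely many $t_j$ satisfy $J(\rho_l p_{t_j})<m_0+\tfrac1l$, and because $t_j\to 0$ I can select among them some $\delta_l<\tfrac1l$. Combining with the lower bound gives $m_0\leq J_{\delta_l}(\mu_{\delta_l})\leq J(\rho_l p_{\delta_l})<m_0+\tfrac1l$; letting $l\to\infty$ forces $\delta_l\to 0$ and $J_{\delta_l}(\mu_{\delta_l})\to m_0$, which is \eqref{minimumconv}.

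The step requiring the most care, and the \emph{main obstacle}, is correctly supplying the hypotheses of assumption (1): one must distinguish the two roles played by the parameter, namely the fixed membership class $\cD_{\gamma_l}$ (needed so that both the tail of the sequence and its limit $\rho_l$ lie in one common $\cD_{\gamma_l}$) versus the vanishing heat time $\delta$. Producing the required a.e.-convergent sequence hinges on the approximate-identity property $p_t\rho_l\to\rho_l$ upgraded to a.e. convergence along a subsequence, which is precisely where strong continuity of the Neumann heat semigroup on $L^2(D)$ is used; once this is in hand, the remaining selection argument and the lower bound are elementary.
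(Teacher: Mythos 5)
Your proof is correct and follows essentially the same route as the paper's: the lower bound $\inf_{\cD}J\leq J_{\delta}(\mu_{\delta})$ comes from $\mu_{\delta}p_{\delta}\in\cD$, and the upper bound comes from comparing $\mu_{\delta}$ with a fixed competitor $\rho$, using the $p_t$-invariance of the class $\cD_{\gamma}$ containing it, the strong $L^2$-continuity of the Neumann heat semigroup to extract a.e.\ convergence $p_{\delta_k}\rho\to\rho$ along a subsequence, and the $\liminf$ hypothesis on $J$. The one point where you genuinely refine the argument is the diagonal selection over near-minimizers $\rho_l$ with $\delta_l<1/l$: the paper extracts a sequence $\delta_k$ adapted to a single competitor $\rho$ and then concludes ``since this holds for any $\rho$,'' even though the sequence $\delta_k$ depends on $\rho$, whereas your construction makes that passage to $\inf_{\cD}J$ airtight and also handles the case $\inf_{\cD}J=-\infty$ explicitly.
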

\begin{proof}
Let $\rho \in \cD_{\bar{\delta}}$. Then for each $\delta > 0$,
\[
J_{\delta}(\mu_{\delta})\leq J_{\delta}(\rho)=J(p_{\delta}\rho).
\]
Since $\{p_t\}_{t>0}$ defines a strongly continuous semigroup on $L^2(D)$, there exists a sequence $\delta_k$ with $\dis \lim_{k\to \infty}\delta_k=0$ such that $\dis \lim_{k \to \infty}p_{\delta_k}\rho=\rho$ a.e. Since $p_{\delta_k} \rho$ is a sequence in $\cD_{\bar{\delta}}$, the assumption yields
\[
\liminf_{k\to \infty}J_{\delta_k}(\mu_{\delta_k})\leq \liminf_{k \to \infty}J(p_{\delta_k} \rho) \leq J(\rho).
\]
Since this holds for any $\bar{\delta}$ and $\rho \in \cD_{\bar{\delta}}$, we have
\[
\liminf_{k \to \infty}J_{\delta_k}(\mu_{\delta_k}) \leq \inf_{\cD}J.
\]
On the other hands, it holds that
\[
\inf_{\cD}J \leq J(\mu_{\delta}p_{\delta})=J_{\delta}(\mu_{\delta})
\]
for all $\delta > 0$. Thus we have \eqref{minimumconv}.
\end{proof}
We will provide some examples of $J$ and $J_{\delta}$ in the following subsections.

\subsection{Optimization for heat conduction}\label{minheat}
Let $D$ be a convex bounded domain in $\mathbb{R}^d$ with the partially smooth boundary. Let $\Gamma_0$ and $\Gamma_1$ be nonempty disjoint subsets in $\partial D$ and assume that $\Gamma_0$ has a positive $d-1$-dimensional Hausdorff measure. Let $\Gamma_2:=\partial D \backslash (\Gamma_0 \cup \Gamma_1)$. We denote the trace operator on $H^1(D)$ by $T \colon H^1(D)\to L^2(\partial D)$. Let $H^{-1}(D)$ be the dual space of $H^1_0(D)$. We set
\[
H^1_{\Gamma_0}(D):=\{ u \in H^1(D) \mid T u|_{\Gamma_0}=0 \}.
\]
We also set
\[
H^{\frac{1}{2}}(\Gamma_1):=T(H^1(D))|_{\Gamma_1}
\]
and denote the dual space of $H^{\frac{1}{2}}(\Gamma_1)$ by $H^{-\frac{1}{2}}(\Gamma_1)$. Let $\kappa \in C^1((0,\infty))$ be a bounded non-decreasing function with
\begin{align*}
\sup_{s\in (0,\infty)}(|\kappa'(s)|+|\kappa''(s)|)&<\infty,\\
\lim_{s \to 0} \kappa(s)&>0.
\end{align*}
For given $\rho \in \cD$, $f \in (H^1_{\Gamma_0})^*(D)$ and $g\in H^{-\frac{1}{2}}(\Gamma_1)$, there exists a unique $u=u_{\rho}$ satisfying
\begin{align}\label{gov1}
\int_{\Omega}\kappa(\rho) \nabla u \cdot \nabla \phi \, dx=\langle f, \phi \rangle_{(H^1_{\Gamma_0}(D))^*}+\langle g, T \phi|_{\Gamma_1} \rangle_{H^{-\frac{1}{2}}(\Gamma_1)}
\end{align}
for all $\phi \in H^1_{\Gamma_0}(D)$ by the Lax-Milgram theorem. If $f\in L^2(D)$, $g \in L^2(\Gamma_1)$ and $u=u_{\rho} \in H^2(D)$, the equation \eqref{gov1} corresponds to the PDE
\begin{align*}
\begin{cases}
-\mathrm{div}(\kappa (\rho) \nabla u )&=f\ \text{in}\ D,\\
u&=0\ \text{on}\ \Gamma_0,\\
\kappa(\rho) \partial_{\bn}u &= g\ \text{on}\ \Gamma_1,\\
\kappa(\rho) \partial_{\bn}u &= 0\ \text{on}\ \Gamma_2,
\end{cases}
\end{align*}
where $\bn$ is the outward unit normal vector field on $\partial D$ and $\partial_{\bn}$ is the outward normal derivative with the direction $\bn$.
We consider a functional $J \colon \cD \to $ defined by
\begin{align}\label{conductance}
J(\rho):=\langle f, u_{\rho} \rangle_{(H^1_{\Gamma_0})^*} + \langle g, Tu_{\rho} |_{\Gamma_1}\rangle_{H^{-\frac{1}{2}}(\Gamma_1)} - \frac{1}{2}\int_D \kappa(\rho(x))|\nabla u_{\rho}|^2(x)\, dx,
\end{align}
which is the sign-inverted potential energy in the thermal diffusion.
Since $u_{\rho}$ satisfies \eqref{gov1}, it holds that
\begin{align*}
J(\rho)=\frac{1}{2}\int_D \kappa(\rho(x))|\nabla u_{\rho}|^2(x)\, dx = \frac{1}{2}\left( \langle f, u_{\rho} \rangle_{(H^1_{\Gamma_0})^*} + \langle g, T u_{\rho} |_{\Gamma_1}\rangle_{H^{-\frac{1}{2}}(\Gamma_1)} \right).
\end{align*}
In particular, the optimization of $J$ is equivalent to the minimization of the weighted Dirichlet energy of $u_{\rho}$. Let $J_{\delta}$ be the functional defined by \eqref{defJdelta}.
\begin{thm}\label{wellposed1}
The functional $J_{\delta}$ is continuous on $\cP(\clD)$ with respect to the narrow topology. In addition, $J_{\delta}$ admits its minimum in $\cP(\clD)$.
\end{thm}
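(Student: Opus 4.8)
The plan is to reduce the continuity of $J_\delta$ to two ingredients: that the Neumann heat kernel turns narrow convergence of measures into pointwise convergence of the smoothed densities, and that the solution map $\rho \mapsto u_\rho$ of the state equation \eqref{gov1} is continuous under such convergence of the coefficient $\kappa(\rho)$. The observation that keeps the argument light is to work with the linear representation
\[
J(\rho)=\frac{1}{2}\left(\langle f, u_{\rho}\rangle_{(H^1_{\Gamma_0})^*}+\langle g, Tu_{\rho}|_{\Gamma_1}\rangle_{H^{-\frac{1}{2}}(\Gamma_1)}\right),
\]
derived just above, rather than the quadratic Dirichlet-energy form: being linear in $u_\rho$, it will suffice to establish only weak $H^1$-convergence of the states, not strong convergence.

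First I would take $\mu_n \to \mu$ narrowly and set $\rho_n:=\mu_n p_\delta$, $\rho:=\mu p_\delta$. For each fixed $x$ the function $y\mapsto p^r_\delta(x,y)$ is bounded and continuous on $\clD$, so narrow convergence gives $\rho_n(x)=\int_D p^r_\delta(x,y)\,\mu_n(dy)\to \int_D p^r_\delta(x,y)\,\mu(dy)=\rho(x)$ for every $x\in\clD$, with the uniform bounds $c_1\le \rho_n,\rho\le c_2$. By continuity of $\kappa$ this yields $\kappa(\rho_n)\to\kappa(\rho)$ pointwise and boundedly, whence $\kappa(\rho_n)\nabla\phi\to\kappa(\rho)\nabla\phi$ in $L^2(D)$ for each fixed $\phi$ by dominated convergence. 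Because $\rho_n\ge c_1>0$ and $\kappa$ is non-decreasing with $\lim_{s\to0}\kappa(s)>0$, I would record the uniform ellipticity $\kappa(\rho_n)\ge\kappa(c_1)=:\alpha>0$, together with $\kappa(\rho_n)\le\sup\kappa=:\beta<\infty$.

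Next, uniform ellipticity and the Poincaré inequality on $H^1_{\Gamma_0}(D)$ (valid since $\Gamma_0$ has positive measure) give, by testing \eqref{gov1} with $\phi=u_{\rho_n}$, a uniform bound $\|u_{\rho_n}\|_{H^1}\le C$, so along a subsequence $u_{\rho_n}\rightharpoonup u^*$ weakly in $H^1(D)$. I would then pass to the limit in the weak formulation written as $\int_D \nabla u_{\rho_n}\cdot\bigl(\kappa(\rho_n)\nabla\phi\bigr)\,dx=\langle f,\phi\rangle+\langle g,T\phi|_{\Gamma_1}\rangle$, combining the weak convergence of $\nabla u_{\rho_n}$ with the strong $L^2$-convergence of $\kappa(\rho_n)\nabla\phi$; this identifies $u^*$ as the solution of \eqref{gov1} for the coefficient $\kappa(\rho)$, so $u^*=u_\rho$ by uniqueness, and independence of the subsequence upgrades this to $u_{\rho_n}\rightharpoonup u_\rho$ for the whole sequence. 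Feeding the weak limit into the linear representation of $J$—using that $f\in(H^1_{\Gamma_0})^*$ pairs continuously with the weak limit and that $T$ sends weakly convergent sequences in $H^1$ to weakly convergent traces in $H^{\frac{1}{2}}(\Gamma_1)$—gives $J_\delta(\mu_n)=J(\rho_n)\to J(\rho)=J_\delta(\mu)$, which is narrow continuity. Finally, since $\clD$ is compact, $\cW_2$-convergence coincides with narrow convergence, so $(\cP(\clD),\cW_2)$ is compact and the continuous functional $J_\delta$ attains its minimum.

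The hard part will be the passage to the limit in the state equation: one must simultaneously handle the weak convergence of the gradients $\nabla u_{\rho_n}$ and the convergence of the variable coefficients $\kappa(\rho_n)$, and the clean resolution is to split the integrand as $\nabla u_{\rho_n}\cdot\bigl(\kappa(\rho_n)\nabla\phi\bigr)$ so that only the fixed test factor carries the coefficient and converges strongly. Securing the uniform ellipticity constant $\alpha>0$—which relies on $c_1>0$ and the positive lower limit of $\kappa$—is the other point I would be careful about, since it underlies both the uniform $H^1$ bound and the identification of the limit.
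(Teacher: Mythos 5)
Your proof is correct, and its overall skeleton coincides with the paper's: narrow convergence plus continuity of $y\mapsto p^r_\delta(x,y)$ gives pointwise convergence of the smoothed densities within $\cD_\delta$, this yields weak $H^1$-convergence of the states, and the linear representation $J(\rho)=\frac{1}{2}(\langle f,u_\rho\rangle+\langle g,Tu_\rho|_{\Gamma_1}\rangle)$ then passes to the limit, after which compactness of $(\cP(\clD),\cW_2)$ gives the minimizer. The one genuine difference is in how the middle step is justified: the paper cites H-convergence theory (Lemma 1.2.22 in Allaire, which asserts that a.e.\ convergent, uniformly elliptic and bounded coefficients H-converge to their pointwise limit), whereas you open that black box and prove the weak convergence of $u_{\rho_n}$ directly --- uniform ellipticity $\kappa(\rho_n)\ge\kappa(c_1)>0$ and Poincar\'e give the uniform $H^1$ bound, the splitting $\nabla u_{\rho_n}\cdot(\kappa(\rho_n)\nabla\phi)$ pairs a weakly convergent factor with a strongly $L^2$-convergent one, and Lax--Milgram uniqueness upgrades subsequential limits to the full sequence. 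Your version is self-contained and elementary (and is essentially the proof of the cited lemma in this scalar isotropic setting); the paper's citation is shorter and signals that only H-convergence of the coefficients is really needed, which is the viewpoint reused in Proposition \ref{approx1}. Both arguments correctly exploit that the linear form of $J$ requires only weak, not strong, convergence of the states.
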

\begin{proof}
Let $\mu_n$ be a sequence in $\cD$ which converges to $\mu$ narrowly. We denote $u_n=u_{\mu_n p_{\delta}}$. Then for each $x \in D$,
\begin{align*}
\mu_n p^r_{\delta}(x) \to \mu p^r_{\delta}(x).
\end{align*}
In addition, $\{ \mu_n p_{\delta}\}_{n\in \mathbb{N}}$ is a sequence in $\cD_{\delta}$.
Therefore, the diffusion coefficient $\mu_np^r_{\delta}$ H-converges to $\mu p^r_{\delta}$ (see Lemma 1.2.22 in \cite{Allaire02} for details). In particular, we have
\[
u_n \to u\ \text{weakly in}\ H^1_{\Gamma_0}(D).
\]
Therefore, it holds that
\begin{align*}
J_{\delta}(\mu_n)&=J(\mu_n p_{\delta})\\
&=\langle f, u_n \rangle_{(H^1_{\Gamma_0})^*} + \langle g, T u_n|_{\Gamma_1} \rangle_{H^{-\frac{1}{2}}(\Gamma_1)} \\
&\to \langle f, u \rangle_{(H^1_{\Gamma_0})^*} + \langle g, T u|_{\Gamma_1} \rangle_{H^{-\frac{1}{2}}(\Gamma_1)}\ (\text{as}\ \to \infty) \\
&=J(\mu p_{\delta})\\
&=J_{\delta}(\mu).
\end{align*}
\end{proof}
\begin{prop}\label{approx1}
Let $\delta>0$. Let $\rho_n$ be a sequence in $\cD_{\delta}$ which converges to some $\rho \in \cD_{\delta}$ a.e. Then
\[
\lim_{n\to \infty}J(\rho_n) = J(\rho).
\]
\end{prop}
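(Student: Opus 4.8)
The plan is to follow exactly the strategy of the proof of \tref{wellposed1}, with the smoothed densities $\mu_n p_{\delta}$ appearing there replaced by the densities $\rho_n$ themselves, which by hypothesis already lie in $\cD_{\delta}$. Write $u_n := u_{\rho_n}$ and $u := u_{\rho}$ for the solutions of \eqref{gov1} with diffusion coefficients $\kappa(\rho_n)$ and $\kappa(\rho)$, respectively. The first step is to record uniform ellipticity: since $\rho_n, \rho \in \cD_{\delta}$ we have $c_1 \leq \rho_n, \rho \leq c_2$, and because $\kappa$ is bounded, non-decreasing, and satisfies $\lim_{s\to 0}\kappa(s)>0$, there exist constants $0 < \lambda \leq \Lambda < \infty$ with $\lambda \leq \kappa(\rho_n), \kappa(\rho) \leq \Lambda$ on $D$ for every $n$. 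The Lax--Milgram estimate applied to \eqref{gov1} then yields a uniform bound on $\|u_n\|_{H^1_{\Gamma_0}(D)}$.

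Next I would invoke the H-convergence machinery just as in \tref{wellposed1}. Since $\kappa$ is continuous and $\rho_n \to \rho$ a.e., the coefficients satisfy $\kappa(\rho_n) \to \kappa(\rho)$ a.e.; combined with the uniform two-sided bound of the previous step, this strong convergence of uniformly elliptic coefficients implies that $\kappa(\rho_n)$ H-converges to $\kappa(\rho)$ (see Lemma 1.2.22 in \cite{Allaire02}). Consequently the associated solutions converge, namely $u_n \rightharpoonup u$ weakly in $H^1_{\Gamma_0}(D)$.

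Finally I would pass to the limit in the functional using the reduced expression
\[
J(\rho_n)=\frac{1}{2}\left( \langle f, u_n \rangle_{(H^1_{\Gamma_0})^*} + \langle g, T u_n|_{\Gamma_1} \rangle_{H^{-\frac{1}{2}}(\Gamma_1)} \right)
\]
derived from \eqref{gov1}. The weak convergence $u_n \rightharpoonup u$ makes the first pairing converge because $f \in (H^1_{\Gamma_0}(D))^*$, and it makes the boundary pairing converge because the trace is a bounded, hence weakly continuous, map into $H^{\frac{1}{2}}(\Gamma_1)$ and $g \in H^{-\frac{1}{2}}(\Gamma_1)$. Therefore $J(\rho_n) \to J(\rho)$, which, together with the symmetric role of $\rho$ and $\rho_n$, gives the claimed equality (rather than merely a $\liminf$ inequality).

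Since the argument is essentially a restatement of \tref{wellposed1}, I do not anticipate a genuine obstacle. The only point requiring care is that here the hypothesis is a.e. convergence of the \emph{coefficients}, whereas \tref{wellposed1} used narrow convergence of the \emph{measures}; this is precisely where the uniform bound $\lambda \leq \kappa(\rho_n) \leq \Lambda$ does the work, as it is the uniform ellipticity, and not the mode of convergence, that forces the H-limit to coincide with the pointwise limit $\kappa(\rho)$.
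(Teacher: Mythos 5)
Your proposal is correct and follows essentially the same route as the paper: the paper's proof likewise observes that the uniform two-sided bounds on $\rho_n$ in $\cD_{\delta}$ together with a.e.\ convergence give H-convergence of the coefficients, and then repeats the limit passage of \tref{wellposed1}. The only difference is that you spell out the intermediate steps (uniform ellipticity of $\kappa(\rho_n)$, weak $H^1$-convergence of $u_n$, continuity of the two pairings) that the paper leaves implicit.
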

\begin{rem}
In particular, we can deduce that minimizers of $J_{\delta}$ approximates $\dis \inf_{\cD}J$ in some sense by \lref{approxLem}.
\end{rem}
\begin{proof}
Since the sequence $\rho_n$ is uniformly bounded from both above and below by constants, it also H-converges to $\rho$. Thus in the same way as the proof of \tref{wellposed1}, we have $\dis \lim_{n\to \infty}J(\rho_n)=J(\rho)$.
\end{proof}
\subsection{Optimization for mean compliance of linear elastic structures}\label{mincomp}
We continue to use the setting in the previous subsection for $D, \Gamma_0, \Gamma_1, \Gamma_2, \kappa$. Let $m \in \mathbb{N}$. In this subsection, we consider the vector valued Sobolev space $H^1_{\Gamma_0}(D;\mathbb{R}^m)$ and $H^{\frac{1}{2}}(\Gamma_1; \mathbb{R}^m)$ defined in the same way as in Subsection \ref{minheat}. For given $\rho \in \cD$, $f\in (H^1_{\Gamma_0}(D; \mathbb{R}^m))^*$ and $g \in H^{-\frac{1}{2}}(\Gamma_1; \mathbb{R}^m)$, let $u=u_{\rho} \in H_{\Gamma_0}^1(D; \mathbb{R}^m)$ solve the following weak form of the Lam\'e system
\begin{align*}
\int_{\Omega} \kappa(\rho)  \ep(\phi)\colon \sigma(u) \, dx &= \langle f,\phi \rangle_{(H^1_{\Gamma_0})^*}+\langle g, T \phi|_{\Gamma_1} \rangle_{H^{-\frac{1}{2}}(\Gamma_1)}\ \text{for all}\ \phi \in H^1_{\Gamma_0}(D; \mathbb{R}^m),
\end{align*}
where $\ep(u)$ and $\sigma(u)$ are defined by
\begin{align*}
\ep(u)&:=\frac{1}{2}\left( \nabla u + (\nabla u)^T \right),\\
\sigma(u)&:=2 \lambda_1 \ep(u) + \lambda_2 \mathrm{tr}(\ep(u)) I_d
\end{align*}
for $\lambda_1, \lambda_2 >0$, which are called Lam\'e's constants. Define a functional $J \colon \cD \to (-\infty,\infty]$ by
\begin{align}\label{MeanCompliance}
J(\rho)&:= \langle f,u_{\rho} \rangle_{(H^1_{\Gamma_0})^*} + \langle g, T u|_{\Gamma_1} \rangle_{H^{-\frac{1}{2}}(\Gamma_1)} - \frac{1}{2}\int_D \kappa(\rho) \ep(u_{\rho})\colon \sigma(u_{\rho})\, dx,
\end{align}
which is the sign-inverted potential energy. Then it holds that
\[
J(\rho)=\frac{1}{2}\int_D \kappa(\rho) \ep(u_{\rho})\colon \sigma(u_{\rho})\, dx = \frac{1}{2}\left( \langle f,u_{\rho} \rangle_{(H^1_{\Gamma_0})^*} + \langle g, T u|_{\Gamma_1} \rangle_{H^{-\frac{1}{2}}(\Gamma_1)} \right)
\]
by the state equation for $u_{\rho}$.
Then in the same way as \tref{wellposed1} and \pref{approx1}, we can show the following.
\begin{thm}\label{wellposed2}
\begin{enumerate}
\item The functional $J_{\delta}$ is narrowly continuous on $\cP(\clD)$. Thus consequently $J_{\delta}$ attains its minimum in $\cP(\clD)$.
\item Let $\mu_{\delta}$ be a minimizer of $J_{\delta}$. Then there exists a subsequence $\delta_k$ with $\dis \lim_{k \to \infty}\delta_k = 0$ such that
\[
\lim_{k \to \infty}J_{\delta_k}(\mu_{\delta_k})=\inf_{\cD}J.
\]
\end{enumerate}
\end{thm}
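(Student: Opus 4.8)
The plan is to mirror the proofs of \tref{wellposed1} and \pref{approx1}, replacing the scalar diffusion equation by the Lam\'e system, and then to invoke \lref{approxLem} for the second part. For part (1), I would take $\mu_n$ converging narrowly to $\mu$ and first pass to the regularized densities. For fixed $\delta>0$ and fixed $x\in D$ the kernel $y\mapsto p^r_\delta(x,y)$ is bounded and continuous on the compact set $\clD$, so narrow convergence gives the pointwise convergence $\mu_n p^r_\delta(x)\to \mu p^r_\delta(x)$. Since each $\mu_n p_\delta$ lies in $\cD_\delta$, the densities satisfy the uniform bounds $c_1\le \mu_n p^r_\delta\le c_2$; the hypotheses on $\kappa$ then make the coefficients $\kappa(\mu_n p^r_\delta)$ uniformly bounded above and below by positive constants and pointwise convergent to $\kappa(\mu p^r_\delta)$, so dominated convergence upgrades this to strong $L^p(D)$ convergence for every $p<\infty$.

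With the coefficients converging, the Lam\'e operators scaled by $\kappa(\mu_n p^r_\delta)$ H-converge to the one scaled by $\kappa(\mu p^r_\delta)$ (the H-convergence theory for the elasticity system playing the role of Lemma 1.2.22 in \cite{Allaire02} in the scalar case), whence the states converge weakly, $u_n\to u$ weakly in $H^1_{\Gamma_0}(D;\mathbb{R}^m)$. Since $J_\delta(\mu_n)=\langle f,u_n\rangle_{(H^1_{\Gamma_0})^*}+\langle g,Tu_n|_{\Gamma_1}\rangle_{H^{-\frac{1}{2}}(\Gamma_1)}$ is a fixed bounded linear functional of $u_n$, it is weakly continuous, so $J_\delta(\mu_n)\to J_\delta(\mu)$ and $J_\delta$ is narrowly continuous. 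Existence of a minimizer then follows from the lower semicontinuity lemma of \sref{Formulation}, using compactness of $(\cP(\clD),\cW_2)$ together with the coincidence of narrow and $\cW_2$ convergence on $\clD$.

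For part (2), I would verify the two hypotheses of \lref{approxLem}. The convergence hypothesis, that $\liminf_n J(\rho_n)\le J(\rho)$ whenever $\rho_n\to\rho$ a.e. in $\cD_\delta$, follows exactly as in \pref{approx1}: a.e. convergence plus the two-sided bounds on $\cD_\delta$ again yield strong convergence of $\kappa(\rho_n)$ and hence weak $H^1$ convergence of the states, giving in fact the equality $\lim_n J(\rho_n)=J(\rho)$. The existence hypothesis is supplied by part (1). Invoking \lref{approxLem} then produces a sequence $\delta_k\to 0$ with $\lim_k J_{\delta_k}(\mu_{\delta_k})=\inf_{\cD}J$.

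The only genuinely new ingredient relative to the scalar case is the limit passage in the Lam\'e system, and this is where I expect the main difficulty. Rather than invoking the full H-convergence machinery for systems, the cleanest route is direct: uniform coercivity via Korn's inequality yields a uniform $H^1$ bound on $u_n$, so along a subsequence $u_n\rightharpoonup u^*$; then $\kappa(\rho_n)\ep(\phi)\to\kappa(\rho)\ep(\phi)$ strongly in $L^2$ by dominated convergence, paired against $\sigma(u_n)\rightharpoonup\sigma(u^*)$ weakly in $L^2$, allows one to pass to the limit in the weak form, and uniqueness of the weak solution identifies $u^*=u$ and removes the subsequence.
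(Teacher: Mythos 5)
Your proposal is correct and follows the same route as the paper, which for this theorem simply states that the argument of \tref{wellposed1} and \pref{approx1} carries over: narrow convergence gives pointwise convergence of the mollified densities $\mu_n p^r_\delta$ within the uniform bounds $c_1\le\cdot\le c_2$, the coefficients $\kappa(\mu_n p^r_\delta)$ converge a.e.\ and boundedly, the states converge weakly in $H^1_{\Gamma_0}(D;\mathbb{R}^m)$, and $J_\delta(\mu_n)\to J_\delta(\mu)$ because the objective is a fixed bounded linear functional of the state; part (2) is then exactly an application of \lref{approxLem}. The one place where you go beyond the paper is worthwhile: the paper justifies the limit passage by citing the scalar H-convergence result (Lemma 1.2.22 in \cite{Allaire02}), which does not literally cover the Lam\'e system, whereas your direct argument --- uniform coercivity via Korn's inequality giving an $H^1$ bound, extraction of a weakly convergent subsequence, passage to the limit in the weak form by pairing the strongly $L^2$-convergent $\kappa(\rho_n)\ep(\phi)$ against the weakly convergent $\sigma(u_n)$, and identification of the limit by uniqueness --- is self-contained and avoids invoking H-convergence theory for systems. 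Both approaches yield the same conclusion; yours is the more elementary and, for the vector-valued case, the more carefully justified of the two.
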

\section{Numerical algorithm}\label{numerics}
To begin with, we derive the Wasserstein gradient flow associated with the functional $J_{\delta}$ defined by \eqref{defJdelta} in order to obtain a candidate of a minimizer of $J_{\delta}$. In this section, we continue to use the notations in \sref{Formulation} but we additionally assume that the functional $J \colon \cD \to (-\infty,\infty]$ is given by the restriction of a functional $\cJ \colon L^{\infty}_{lbd}(D)\to (-\infty, \infty]$, where
\[
L^{\infty}_{lbd}(D)=\{ \rho \in L^{\infty}(D) \mid \text{There exists}\ c>0\ \text{such that}\ \rho \geq c\ \text{a.e.} \}.
\]
Let $\cM_+(\clD)$ be the set of positive finite Radon measures on $\clD$. Then for any $\delta>0$ and $\mu \in \cM_+(\clD)$,
\[
\mu p^r_{\delta}(x):=\int_{\clD}p^r_{\delta}(x,y)\, \mu(dy) \geq c^1\mu(\clD)>0.
\]
Therefore we can define a functional $\cJ_{\delta}\colon \cM_+(\clD) \to (-\infty, \infty]$ by
\[
\cJ_{\delta}(\mu):=\cJ(\mu p^r_{\delta}).
\]
\begin{prop}\label{Sensitivity}
Assume that $\cJ$ is G\^ateaux differentiable at each $\rho \in \cD$ in the direction $\psi \in C^{\infty}(D)$ and there exists $S[\rho] \in L^1(D)$ such that
\[
\lim_{\tau \to 0}\frac{1}{\tau}(\cJ(\rho + \tau \psi)-\cJ(\rho))=\int_D S(x)\psi(x)\, dx.
\]
\begin{enumerate}
\item The functional $\cJ_{\delta}$ is G\^ateaux differentiable on $\cP(\clD)$ in the direction of $C^{\infty}(D)$ and
\begin{align*}
\lim_{\tau \to 0}\frac{1}{\tau}(\cJ_{\delta}(\rho + \tau \psi)-\cJ_{\delta}(\rho))&=\int_D S_{\delta}(x)\psi(x)\, dx,\\
S_{\delta}[\mu]&=p_{\delta}S[\mu p_{\delta}].
\end{align*}
In particular, $S_{\delta}[\mu] \in C^{\infty}(D) \cap C(\overline{D})$ and it satisfies the Neumann boundary condition on $\partial D$.
\item For each $\mu \in \cP(\clD)$,
\begin{align}\label{GradFormula}
\nabla^{\cW}J_{\delta}[\mu]=-\mathrm{div}\mu \nabla S_{\delta}[\mu].
\end{align}
\end{enumerate}
\end{prop}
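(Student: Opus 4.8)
The plan is to establish the two assertions separately: part (1) by reducing the additive variation of $\cJ_\delta$ to that of $\cJ$ through the self-adjointness of $p_\delta$, and part (2) by differentiating $t\mapsto J_\delta(\mu_t)$ along a transport curve and matching the result with the Riesz representation defining the tangent space.

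For part (1), I would fix $\mu\in\cP(\clD)$ and $\psi\in C^\infty(D)$ and first rewrite the perturbed density. By linearity of $\nu\mapsto \nu p^r_\delta$ and the definition of the semigroup, $(\mu+\tau\psi\,dx)p^r_\delta=\mu p^r_\delta+\tau\,p_\delta\psi$, so that $\cJ_\delta(\mu+\tau\psi)=\cJ(\mu p_\delta+\tau\,p_\delta\psi)$. Here $\mu p_\delta\in\cD_\delta\subset\cD$, and since $\mu p^r_\delta\ge c_1>0$ while $p_\delta\psi$ is bounded, the argument stays in $L^\infty_{lbd}(D)$ for small $|\tau|$, so $\cJ$ is evaluated within its domain; moreover $p_\delta\psi\in C^\infty(D)$ by the smoothing of the Neumann heat semigroup. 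Applying the G\^ateaux hypothesis at $\rho=\mu p_\delta$ in the direction $p_\delta\psi$ gives derivative $\int_D S[\mu p_\delta]\,p_\delta\psi\,dx$. The key step is then to move $p_\delta$ off $\psi$: since $p^r_\delta(x,y)=p^r_\delta(y,x)$ by reversibility of reflecting Brownian motion with respect to Lebesgue measure, $p_\delta$ is self-adjoint on $L^2(D)$, whence $\int_D S[\mu p_\delta]\,p_\delta\psi\,dx=\int_D (p_\delta S[\mu p_\delta])\,\psi\,dx=\int_D S_\delta[\mu]\,\psi\,dx$. The claimed regularity of $S_\delta[\mu]=p_\delta S[\mu p_\delta]$ is exactly that the Neumann heat semigroup sends $L^1(D)$ into $C^\infty(D)\cap C(\clD)$ and yields functions satisfying the homogeneous Neumann condition, which follows from interior and boundary parabolic regularity together with conservativeness $p_\delta 1=1$.

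For the computation in part (2), take an absolutely continuous curve $\mu_t$ with $\mu_0=\mu$ whose velocity field is $\nabla\phi$ near $t=0$, and set $\rho_t:=\mu_t p^r_\delta$. Testing the continuity equation against $\zeta=p^r_\delta(x,\cdot)$, admissible because $p^r_\delta(x,\cdot)\in C^\infty(\clD)$ for each fixed $x$, shows that $t\mapsto\rho_t(x)$ is differentiable near $0$ with $\partial_t\rho_t(x)\big|_{t=0}=\int_D \nabla_y p^r_\delta(x,y)\cdot\nabla\phi(y)\,\mu(dy)=:v(x)$, a smooth function of $x$. Granting the chain rule $\frac{d}{dt}\big|_{0}\cJ(\rho_t)=\int_D S[\mu p_\delta]\,v\,dx$, I would apply Fubini (legitimate since $S[\mu p_\delta]\in L^1$, while $\nabla_y p^r_\delta$ and $\nabla\phi$ are bounded and $\mu$ finite) to obtain $\int_D \big(\int_D S[\mu p_\delta](x)\,\nabla_y p^r_\delta(x,y)\,dx\big)\cdot\nabla\phi(y)\,\mu(dy)$. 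Differentiating under the integral and using kernel symmetry, the inner integral equals $\nabla_y\,p_\delta S[\mu p_\delta](y)=\nabla S_\delta[\mu](y)$, so $\frac{d}{dt}\big|_0 J_\delta(\mu_t)=\int_D \nabla S_\delta[\mu]\cdot\nabla\phi\,d\mu$. Finally, $-\mathrm{div}\,\mu\nabla S_\delta[\mu]$ is by definition the element of $\dot{H}^{-1}_{\mu}(\clD)=T_\mu\cP(\clD)$ whose pairing with $\phi$ is $\int_D \nabla S_\delta[\mu]\cdot\nabla\phi\,d\mu$, so comparing with the defining property of $\nabla^{\cW}J_\delta[\mu]$ closes the argument.

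The hard part will be justifying the chain rule $\frac{d}{dt}\big|_0\cJ(\rho_t)=\int_D S[\mu p_\delta]\,v\,dx$, since G\^ateaux differentiability does not, in general, pass to differentiation along curves. I would handle it by writing $\rho_t=\rho_0+tv+R_t$ with $\|R_t\|_{L^\infty}=o(t)$, observing crucially that $R_t$ is smooth in $x$ because $\rho_t=\mu_t p^r_\delta$ inherits the smoothness of the kernel. The term $t^{-1}(\cJ(\rho_0+tv)-\cJ(\rho_0))$ converges to $\int_D S[\rho_0]v\,dx$ directly by the G\^ateaux hypothesis in the smooth direction $v$, while the remainder $t^{-1}(\cJ(\rho_0+tv+R_t)-\cJ(\rho_0+tv))$ is controlled by a mean-value identity along the smooth direction $R_t$, bounding it by $\sup_s\|S[\,\cdot\,]\|\,\|R_t\|/t=o(1)$ once a mild local boundedness and continuity of the first variation $S$ along these segments in $\cD_\delta$ is invoked. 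That continuity is exactly what the H-convergence arguments already behind \tref{wellposed1} and \pref{approx1} supply for the concrete functionals of Subsections~\ref{minheat} and \ref{mincomp}, so the formal computation becomes rigorous in the cases of interest.
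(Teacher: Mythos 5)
Your argument is correct and follows essentially the same route as the paper: part (1) is the same two-line computation (linearity of $\nu \mapsto \nu p_{\delta}$, the G\^ateaux hypothesis applied at $\mu p_{\delta}$ in the smooth direction $p_{\delta}\psi$, and symmetry of the Neumann heat kernel to move $p_{\delta}$ onto $S$), and part (2) reproduces the differentiation of $t\mapsto J_{\delta}(\mu_t)$ that the paper delegates to Proposition 3 of \cite{OkaYama25} before identifying $\int_D \nabla S_{\delta}[\mu]\cdot\nabla\phi\,d\mu$ with the pairing of $-\mathrm{div}(\mu\nabla S_{\delta}[\mu])$ against $\phi$ in $\dot{H}^{-1}_{\mu}(\clD)$. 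The only substantive difference is that you spell out, and correctly flag as the delicate point, the chain-rule justification via the decomposition $\rho_t=\rho_0+tv+R_t$ with $\|R_t\|_{L^{\infty}}=o(t)$, a detail the paper leaves inside the citation.
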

\begin{proof}
\begin{enumerate}
\item Let $\psi \in C^{\infty}(D)$. Then
\begin{align*}
\frac{1}{\tau}(\cJ_{\delta}(\mu + \tau \psi)-\cJ_{\delta}(\mu)) &= \frac{1}{\tau}(\cJ(\mu p_{\delta} + \tau p_{\delta}\psi)-\cJ(\mu p_{\delta}))\\
&\to \int_D S[\mu p_{\delta}](x)p_{\delta}\psi(x)\, dx\ (\text{as}\ \tau \to 0)\\
&=\int_Dp_{\delta}\left( S[p_{\delta}\rho] \right)(x)\psi(x)\, dx.
\end{align*}
Thus we have $S_{\delta}[\mu]=p_{\delta}S[\mu p_{\delta}]$.
\item Let $\mu_t$ be a curve in $\cP(\clD)$ with its initial value $\mu_0=\mu$ satisfying the continuity equation with the velocity vector field $\nabla \phi$ with $\phi \in C^{\infty}(\clD)$ near $t=0$. Then in the same way as Proposition 3 in \cite{OkaYama25}, which dealt with the case of the whole $\mathbb{R}^d$, we have
\begin{align*}
\left( \frac{d}{d t}\right)_{t=0}J_{\delta}(\mu_t)=\langle \nabla \phi, \nabla S_{\delta}[\mu] \rangle_{L^2(\mu)}.
\end{align*}
Since $S_{\delta}[\mu]$ is smooth and satisfies $\partial_{\bn}S_{\delta}[\mu]=0$ on $\partial D$, we have
\[
\langle \nabla \phi, \nabla S_{\delta}[\mu] \rangle_{L^2(\mu)}=\langle -\mathrm{div}(\mu \nabla S_{\delta}[\mu]),\phi \rangle_{\dot{H}^{-1}_{\mu}(\clD)}.
\]
Thus we obtain \eqref{GradFormula}.
\end{enumerate}
\end{proof}
\begin{ex}\label{diffheat}
Assume the setting of Subsection \ref{minheat}. In this case the functional $J$ defined by \eqref{conductance} can be realized as the restriction of the functional $\cJ$ on $L^{\infty}_{lbd}(D)$ defined straightforward just by regarding the density $\rho$ as an element in $L^{\infty}_{lbd}(D)$. It is well-known that one can derive the G\^ateaux differential of $\cJ$ as
\begin{align*}
\lim_{\tau \to 0}\frac{1}{\tau}(\cJ(\rho + \tau \psi)-\cJ(\rho))&=\int_D S \psi,
\end{align*}
where
\begin{align}\label{heatSensitivity}
S(x)&=\kappa'(\rho (x))|\nabla u_{\rho}|^2(x)
\end{align}
for $\rho \in L^{\infty}_{lbd}(D)$ and $\psi \in C^{\infty}(D)$.
\end{ex}
\begin{ex}\label{diffcomp}
In the setting of Subsection \ref{mincomp}, we can also define the functional $\cJ$ on $L^{\infty}_{lbd}(D)$ as an extension of $J$ defined by \eqref{MeanCompliance}. The G\^ateaux differential of $\cJ$ is given by
\begin{align*}
&\lim_{\tau \to 0}\frac{1}{\tau}(\cJ(\rho + \tau \psi)-\cJ(\rho))=\int_D S \psi,\\
&S(x)=\kappa'(\rho (x))\ep(u_{\rho}) : \sigma(u_{\rho})(x)
\end{align*}
for $\rho \in L^{\infty}_{lbd}(D)$ and $\psi \in C^{\infty}(D)$.
\end{ex}

In view of \pref{Sensitivity}, the Wasserstein gradient flow of the functional $J_{\delta}$ is
\begin{align}\label{gradientflow}
\partial_t\mu_t=\mathrm{div}\left( \mu_t \nabla S_{\delta}[\mu_t] \right).
\end{align}
Then
\begin{align*}
\left(\frac{d}{d t}\right)_{t=0} J_{\delta}(\mu_t)=-\int_D |\nabla S_{\delta}[\mu](x)|^2\, \mu(dx) \leq 0.
\end{align*}

However, the numerical calculation by the update following \eqref{gradientflow} tends to be unstable. Thus we modify the gradient flow by filtering the sensitivity as follows. Fix $\ep>0$. For $\eta>0$, let $S^{\eta}[\mu]$ be the unique solution to
\begin{align}\label{filteredS}
\begin{cases}
-\eta \mathrm{div}(\mu p^r_{\ep} \nabla S^{\eta}[\mu]) + S^{\eta}[\mu]=S_{\delta}[\mu]\ &\text{in}\ D,\\
\partial_{\bn}S^{\eta}[\mu]=0\ &\text{on}\ \partial D
\end{cases}
\end{align}
and consider the continuity equation
\begin{align}\label{filteredGF}
\partial_t \mu^{\eta}_t = \mathrm{div}(\mu_t^{\eta}\nabla S^{\eta}[\mu_t^{\eta}]).
\end{align}
We prepare some notations of norms on spaces of smooth functions. Assume that $D$ has the $C^k$-boundary. Let $C^k(\clD)$ be a space of functions in $C^k(\mathbb{R}^d)$ restricted to $\clD$. We denote $C(\clD)=C^0(\clD)$. For $u \in C^k(\clD)$, we set
\[
\|u\|_{C^k(\clD)}=\sum_{i = (i_1,\dots,i_m)\in \mathrm{MI}_k} \sup_{x\in \clD}|\partial_i u (x)|,
\]
where
\begin{align*}
\mathrm{MI}_k&=\{i=(i_1,\dots,i_m)\mid m\in \mathbb{N},\ i_1,\dots,i_m \in \mathbb{N},\ |i| \leq k \},\\
|i|&=i_1+\cdots+i_m,\\
\partial_i&=\partial_{i_1}\cdots \partial_{i_m},\ i=(i_1,\dots,i_m)\in \mathrm{MI}_k.
\end{align*}
For $\alpha \in (0,1)$ and $u\in C(\clD)$, we set
\begin{align*}
[u]_{C^{0,\alpha}(\clD)}:=\sup_{x,y\in \clD}\frac{|u(x)-u(y)|}{|x-y|^{\alpha}}.
\end{align*}
We also define the H\"{o}lder norm of $u\in C^k(\clD)$ by
\[
\|u\|_{C^{k,\alpha}(\clD)}:=\|u\|_{C^k(\clD)}+\sup_{i\in \mathrm{MI}_k,\ |i|=k}[\partial_i u]_{C^{0,\alpha}(\clD)}.
\]

\begin{lem}\label{ApproxLem}
Assume that $D$ has the $C^3$-boundary.
Let $\cT\colon \cP(\clD) \ni \mu \mapsto \cT_{\mu} \in L^1(D)$ and assume that there exists $A_1>0$ such that
\[
\|\cT_{\mu} \|_{L^1(D)} \leq A_1 
\]
for all $\mu \in \cP(\clD)$. We fix $\delta>0$ and define $\cS \colon \cP(\clD) \ni \mu \mapsto \cS_{\mu} \in C^{\infty}(D)$ by
\[
\cS_{\mu}:=p_{\delta}\cT_{\mu}.
\]
Then there exists $A_2, A_3>0$ depending only on $\delta$ and $D$ such that for all $\mu, \nu \in \cP(\clD)$ and $x,y \in \clD$,
\begin{align}
\int_D |\nabla \cS_{\mu}(x)- \nabla \cS_{\nu}(x)|^2\, dx &\leq A_2 \| \cT_{\mu}-\cT_{\nu} \|^2_{L^1(D)},\label{muLip}\\
|\nabla \cS_{\mu}(x)- \nabla \cS_{\mu}(y)| &\leq A_3 |y-x|.\label{xLip}
\end{align}
In addition, if we further fix $\ep>0$ and define $\cS^{\eta}_{\mu} \in C^{\infty}(D)$ by
\begin{align*}
\begin{cases}
-\eta \mathrm{div}(\mu p_{\ep} \, \nabla \cS^{\eta}_{\mu}) + \cS^{\eta}_{\mu} = \cS_{\mu}\ &\text{in}\ D,\\
\partial_{\bn} \cS^{\eta}_{\mu} = 0\ &\text{on}\ \partial D.
\end{cases}
\end{align*}
Then for any fixed $\gamma \in (0,\frac{1}{2})$, there exists $A_4=A_4(\gamma, \ep, \delta, D)>0$ such that for all $\mu \in \cP(\clD)$ and $\eta >0$,
\begin{align}
\| \nabla \cS^{\eta}_{\mu} - \nabla \cS_{\mu} \|_{C(\clD)} \leq A_4 \eta^{\frac{1}{2}-\gamma}. \label{etaApprox}
\end{align}
\end{lem}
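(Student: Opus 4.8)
The plan is to handle the two Lipschitz-type estimates \eqref{muLip}, \eqref{xLip} by direct differentiation of the heat potential defining $\cS_{\mu}$, and to handle \eqref{etaApprox} by combining a maximum principle, a rescaled Schauder estimate, and a H\"{o}lder interpolation. For the first two, I would write $\cS_{\mu}(x)=\int_D p^r_{\delta}(x,y)\,\cT_{\mu}(y)\,dy$ and differentiate under the integral sign, which is legitimate because for fixed $\delta>0$ the Neumann heat kernel is smooth up to the boundary (the $C^3$ assumption guarantees two $x$-derivatives continuous on $\clD\times\clD$). Setting $M_1:=\sup_{\clD\times\clD}|\nabla_x p^r_{\delta}|$ and $M_2:=\sup_{\clD\times\clD}|\nabla^2_x p^r_{\delta}|$, both finite, the pointwise bound $|\nabla\cS_{\mu}(x)-\nabla\cS_{\nu}(x)|\le M_1\|\cT_{\mu}-\cT_{\nu}\|_{L^1(D)}$ followed by squaring and integrating over $D$ gives \eqref{muLip} with $A_2=|D|M_1^2$, while the mean value theorem applied to $x\mapsto\nabla_x p^r_{\delta}(x,y)$ together with $\|\cT_{\mu}\|_{L^1}\le A_1$ gives \eqref{xLip} with $A_3=A_1 M_2$. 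These steps are routine once the kernel derivative bounds are in hand.

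For \eqref{etaApprox}, write $a:=\mu p_{\ep}$ and $w^{\eta}:=\cS^{\eta}_{\mu}-\cS_{\mu}$. Since $\cS_{\mu}=p_{\delta}\cT_{\mu}$ satisfies the Neumann condition on $\partial D$ and $c_1(\ep)\le a\le c_2(\ep)$, subtracting the equation for $\cS_{\mu}$ from the defining equation of $\cS^{\eta}_{\mu}$ shows that $w^{\eta}$ solves
\[
-\eta\,\mathrm{div}(a\nabla w^{\eta})+w^{\eta}=\eta\,\mathrm{div}(a\nabla\cS_{\mu})\quad\text{in }D,\qquad \partial_{\bn}w^{\eta}=0\ \text{on }\partial D.
\]
Writing $g:=\mathrm{div}(a\nabla\cS_{\mu})$, the norms $\|a\|_{C^1(\clD)}$ and $\|\cS_{\mu}\|_{C^2(\clD)}$ are bounded uniformly in $\mu$ (because $\mu$ is a probability measure and the kernel is smooth), so $\|g\|_{C(\clD)}\le C$ with $C=C(\ep,\delta,D)$. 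A weak maximum principle for $-\eta\,\mathrm{div}(a\nabla\cdot)+I$ — obtained by testing with $(w^{\eta}-\eta\|g\|_{C(\clD)})^+$, the Neumann boundary term being absent in the weak formulation — then yields the $C^0$-rate $\|w^{\eta}\|_{C(\clD)}\le\eta\|g\|_{C(\clD)}\le C\eta$.

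The key step, and the main obstacle, is a uniform-in-$\eta$ H\"{o}lder estimate $\|w^{\eta}\|_{C^{1,\beta}(\clD)}\le K$ for a fixed $\beta\in(0,1)$, with $K=K(\beta,\ep,\delta,D)$. I would obtain it by the rescaling $x=x_0+\sqrt{\eta}\,\tilde{x}$: the singular principal part $-\eta\,\mathrm{div}(a\nabla\cdot)$ becomes the non-degenerate uniformly elliptic operator $-\mathrm{div}_{\tilde{x}}(\tilde{a}\nabla_{\tilde{x}}\cdot)$ whose coefficient $\tilde{a}(\tilde{x})=a(x_0+\sqrt{\eta}\,\tilde{x})$ has $C^{\beta}$-seminorm bounded independently of $\eta$ (rescaling only decreases it), while the right-hand side is of order $\eta$ in $C^{\beta}$. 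Interior and boundary Schauder estimates for the Neumann problem then apply with constants independent of $\eta$ and $\mu$. The genuinely delicate point is the boundary estimate: one must verify the $\eta$-uniform Schauder bound up to $\partial D$ for this singularly perturbed Neumann problem, where the $C^3$-regularity of $\partial D$ and the flattening of the boundary under the rescaling are what make it work.

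Finally I would conclude by interpolation in H\"{o}lder spaces. Using $\|\nabla w^{\eta}\|_{C(\clD)}\le C\|w^{\eta}\|_{C(\clD)}^{\beta/(1+\beta)}\|w^{\eta}\|_{C^{1,\beta}(\clD)}^{1/(1+\beta)}$ together with the two bounds above gives
\[
\|\nabla w^{\eta}\|_{C(\clD)}\le C\,\eta^{\beta/(1+\beta)}K^{1/(1+\beta)}.
\]
Since $\beta/(1+\beta)$ sweeps $(0,\tfrac12)$ as $\beta$ ranges over $(0,1)$, given $\gamma\in(0,\tfrac12)$ I would choose $\beta=\frac{1-2\gamma}{1+2\gamma}\in(0,1)$, so that $\beta/(1+\beta)=\tfrac12-\gamma$ and \eqref{etaApprox} follows with $A_4=A_4(\gamma,\ep,\delta,D)$. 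This also explains the restriction $\gamma\in(0,\tfrac12)$: it is precisely the range of interpolation exponents available between the $O(\eta)$ bound in $C^0$ and the uniform bound in $C^{1,\beta}$, the loss $\gamma$ reflecting that one does not interpolate against a uniform $C^{1,1}$ (that is, $\beta=1$) bound here.
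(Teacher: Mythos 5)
Your estimates \eqref{muLip} and \eqref{xLip} are proved exactly as in the paper: differentiate $\cS_{\mu}(x)=\int_D p^r_{\delta}(x,y)\cT_{\mu}(y)\,dy$ under the integral and use the $C^1$- and $C^2$-bounds of the Neumann heat kernel together with the $L^1$-bounds on $\cT_{\mu}$. For \eqref{etaApprox} you also share the paper's starting point --- the same equation for $w=\cS^{\eta}_{\mu}-\cS_{\mu}$, the same $O(\eta)$ maximum-principle bound $\|w\|_{C(\clD)}\le\eta\|F\|_{C(\clD)}$ (the paper cites Lemma 2.2 of Kono for this; your truncation test-function argument is a correct direct substitute), and the same uniform control of $\|\mu p_{\ep}\|_{C^{1,\alpha}}$ and $\|\cS_{\mu}\|_{C^{2,\alpha}}$ --- but the core step is genuinely different. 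The paper never rescales: it applies the global Schauder estimate (Gilbarg--Trudinger, Theorem 6.30) to the $\eta$-independent operator $\mathrm{div}(\mu p_{\ep}\nabla\,\cdot\,)$, putting the singular zeroth-order term $\eta^{-1}w$ on the right-hand side, and then uses two H\"older interpolation inequalities with carefully chosen parameters $\theta=\eta^{(1-\alpha)/(2-\alpha)}$, $\theta'\sim\eta^{1/(2-\alpha)}$ to absorb the resulting $\eta^{-1}\|w\|_{C^{0,\alpha}}$ term, landing on $\|w\|_{C^1}\le A\eta^{(1-\alpha)/(2-\alpha)}\|F\|_{C^{0,\alpha}}$ with $\alpha=4\gamma/(1+2\gamma)$ --- the same exponent $\tfrac12-\gamma$ you obtain. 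You instead propose the rescaling $x=x_0+\sqrt{\eta}\,\tilde x$ to get an $\eta$-uniform $C^{1,\beta}$ bound on $w$ and then interpolate between $C^0$ and $C^{1,\beta}$. Your route is the classical singular-perturbation viewpoint and, if completed, would in fact yield the sharper interior rate $O(\eta^{1/2})$; the paper's route is less sharp in spirit but requires only the off-the-shelf global Neumann Schauder estimate on the fixed domain $D$.

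The one place where your argument is not yet a proof is precisely the step you flag: the $\eta$-uniform boundary Schauder estimate for the rescaled Neumann problem. This is standard but genuinely requires work --- covering a $\sqrt{\eta}$-neighbourhood of $\partial D$ by boundary charts, checking that the flattening maps and the rescaled coefficient $\tilde a$ have $C^{1,\beta}$-norms bounded uniformly as $\eta\to0$, and patching the local estimates --- and it is exactly the labour that the paper's non-rescaled argument avoids. If you prefer your route, you should either carry out this boundary analysis or cite a uniform Schauder estimate for singularly perturbed Neumann problems; otherwise the paper's absorption-by-interpolation argument is the shorter path to the stated (lossy) exponent.
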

\begin{proof}
By simple calculation, we have
\begin{align*}
|\nabla \cS_{\mu}(x)-\nabla \cS_{\nu}(x)| &\leq \int_D \nabla_x p^r_{\delta}(x,y)|(T_{\mu}(y)-T_{\nu}(y))|\, dy\\
&\leq \|p^r_{\delta} \|_{C^1(\clD \times \clD)} \|T_{\mu}-T_{\nu} \|_{L^1(D)}.
\end{align*}
Thus we obtain \eqref{muLip}. In a similar way, we have
\begin{align*}
|\nabla S_{\mu}(x)- \nabla S_{\mu}(y)| &\leq \int_D | \nabla_x p^r_{\delta}(x,z)-\nabla_y p^r_{\delta}(y,z) | T_{\mu}(z)\, dz\\
&\leq A_1 \|p_{\delta}^r\|_{C^2(\clD \times \clD)} |x-y|.
\end{align*}
Thus we have \eqref{xLip}. Next, we set $w:=\cS^{\eta}_{\mu}-\cS_{\mu}$. Then $w$ satisfies
\begin{align*}
\begin{cases}
-\eta \mathrm{div}(\mu p_{\ep} \, \nabla w) + w = -\eta \mathrm{div}(\mu p_{\ep} \nabla \cS_{\mu})\ &\text{in}\ D,\\
\partial_{\bn} w = 0\ &\text{on}\ \partial D.
\end{cases}
\end{align*}
In order to obtain \eqref{etaApprox}, we employ the Schauder estimate for $w$. We fix $\gamma \in (0,\frac{1}{2})$ and set $\alpha=\frac{4\gamma}{1+2\gamma} \in (0,1)$. We let $F=\mathrm{div}(\mu p_{\ep} \nabla \cS_{\mu})$. Hereafter, we denote constants depending only on $D$, $\gamma$, $\delta$ and $\ep$ by the same notation $A$ to simplify the argument. By Lemma 2.2 of \cite{Kono25}, we have
\begin{align}\label{maximal}
\| w \|_{C(\clD)} \leq \eta \|F\|_{C(\clD)}.
\end{align}
On the other hand, by Theorem 6.30 in \cite{GT}, it holds that
\begin{align}\label{Schauder}
\| w \|_{C^{2,\alpha}(\clD)} \leq A (\|w\|_{C(\clD)}+\|F-\eta^{-1}w\|_{C^{0,\alpha}(\clD)})
\end{align}
for some $A$. By the interpolation inequality regarding H\"{o}lder spaces (e.g. Lemma 6.35 in \cite{GT}), for each $\theta, \theta'>0$, 
\begin{align}
[ w ]_{C^{0,\alpha}(\clD)} &\leq A\theta^{-\frac{\alpha}{1-\alpha}}\|w\|_{C(\clD)}+\theta \|w\|_{C^1(\clD)},\\
\|w\|_{C^{1}(\clD)} &\leq A (\theta')^{-1}\|w\|_{C(\clD)}+\theta' \|w\|_{C^2(\clD)}.
\end{align}
Combining them with \eqref{maximal} and \eqref{Schauder}, we have
\begin{align*}
\|w\|_{C^{1}(\clD)} &\leq A (\theta')^{-1}\| w \|_{C(\clD)}+A\theta'(\|w\|_{C(\clD)}+\|F-\eta^{-1}w\|_{C^{0,\alpha}(\clD)})\\
&\leq A((\theta')^{-1} + \theta')\|w\|_{C(\clD)}+A\theta'\|F\|_{C^{0,\alpha}(\clD)}\\
&\h +\frac{A\theta'}{\eta}((\theta^{-\frac{\alpha}{1-\alpha}}+1)\|w\|_{C(\clD)}+\theta \|w\|_{C^1(\clD)}).
\end{align*}
Thus we have
\begin{align}
(1-\frac{A\theta \theta'}{\eta})\|w\|_{C^1(\clD)} \leq A(\theta'+\frac{\eta}{\theta'}+\eta \theta'+\theta'\theta^{-\frac{\alpha}{1-\alpha}})\|F\|_{C^{0,\alpha}(\clD)}.
\end{align}
Choosing $\theta=\eta^{\frac{1-\alpha}{2-\alpha}}$ and $\theta'=\frac{1}{2A}\eta^{\frac{1}{2-\alpha}}$, we have
\begin{align}
\|w\|_{C^1(\clD)} \leq A \eta^{\frac{1-\alpha}{2-\alpha}}\|F\|_{C^{0,\alpha}(\clD)}=A\eta^{\frac{1}{2}-\gamma}\|F\|_{C^{0,\alpha}(\clD)}.
\end{align}
Note that $\|F\|_{C^{0,\alpha}(\clD)}$ can be bounded by a constant which depends only on $\gamma$, $\delta$, $\ep$ and $D$. Indeed, we have
\begin{align*}
\|\cS_{\mu}\|_{C^{1,\alpha}(\clD)} &\leq A_1 \|p^r_{\delta}\|_{C^{1,\alpha}(\clD \times \clD)},\\
\|\cS_{\mu}\|_{C^{2,\alpha}(\clD)} &\leq A_1 \|p^r_{\delta}\|_{C^{2,\alpha}(\clD \times \clD)}.
\end{align*}
Therefore, the right-hand side of the following inequality
\begin{align*}
\|F\|_{C^{0,\alpha}(\clD)}&= \|\nabla \mu p_{\ep} \cdot \nabla \cS_{\mu}+\mu p_{\ep} \Delta \cS_{\mu}\|_{C^{0,\alpha}(\clD)}\\
&\leq \|\nabla \mu p_{\ep} \|_{C^{0,\alpha}(\clD)} \|\nabla \cS_{\mu}\|_{C^{0,\alpha}(\clD)}+\| \mu p_{\ep}\|_{C^{0,\alpha}(\clD)}\|\Delta \cS_{\mu}\|_{C^{0,\alpha}(\clD)}\\
&\leq \|\mu p_{\ep}\|_{C^{1,\alpha}(\clD)}\|\cS_{\mu}\|_{C^{1,\alpha}(\clD)} + \| \mu p_{\ep} \|_{C^{0,\alpha}(\clD)}\| \cS_{\mu} \|_{C^{2,\alpha}(\clD)}
\end{align*}
is bounded from above by a constant depending only on $\gamma, \delta, \ep$ and $D$.
Thus we obtain \eqref{etaApprox}.
\end{proof}
\begin{thm}\label{ErrorEstimate}
Assume that $D$ has the $C^3$-boundary. Let $J$ be a functional on $\cD$ defined by \eqref{conductance} in Subsection \ref{minheat} or \eqref{MeanCompliance} in Subsection \ref{mincomp}. We additionally assume that $f \in L^2(D)$ and $g\in L^2(\Gamma_1)$. Let $\{\mu_t^{\eta}\}_{t\in [0,T]}$ be a curve on $\cP(\barD)$ satisfying \eqref{filteredGF} with the initial state $\mu^{\eta}_0=\bar{\mu}$ which is independent of $\eta$. We set $\mu_t:=\mu^0_t$. Then for any $\gamma \in (0,\frac{1}{2})$, there exists a constant $A=A(\delta, \ep, \gamma,D,f,g,\kappa)$ such that for each $t \in [0,T]$,
\begin{align}
\cW_2(\mu_t,\mu^{\eta}_t) \leq A \eta^{\frac{1}{2}-\gamma} t e^{At}.\label{order}
\end{align}
In particular,
\[
\sup_{t\in [0,T]}\cW_2(\mu_t,\mu^{\eta}_t) = O(\eta^{\frac{1}{2}-\gamma})\ \text{as}\ \eta \to 0.
\]
\end{thm}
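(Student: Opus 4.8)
The plan is to represent both curves by their characteristic flows and then close a Gronwall estimate on the squared transport cost taken along the common initial coupling. Write $v_t := -\nabla S_\delta[\mu_t]$ for the velocity field of the original flow \eqref{gradientflow} and $v^{\eta}_t := -\nabla S^{\eta}[\mu^{\eta}_t]$ for that of \eqref{filteredGF}; note that in the notation of \lref{ApproxLem} we have $\cS_{\mu}=S_\delta[\mu]$, $\cS^{\eta}_{\mu}=S^{\eta}[\mu]$ and $\cT_{\mu}=S[\mu p_\delta]$, so that \eqref{muLip}, \eqref{xLip} and \eqref{etaApprox} are at our disposal once the hypotheses of \lref{ApproxLem} are verified. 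For each fixed $\eta\ge 0$ the field $v^{\eta}_t$ is smooth in $x$ by elliptic regularity of \eqref{filteredS} and continuous in $t$, so the ODE $\dot{X}^{\eta}_t(x)=v^{\eta}_t(X^{\eta}_t(x))$ with $X^{\eta}_0=\mathrm{id}$ is uniquely solvable and, by uniqueness of distributional solutions of the continuity equation for spatially Lipschitz fields, $\mu^{\eta}_t=(X^{\eta}_t)_{\sharp}\bar{\mu}$ and $\mu_t=(X^{0}_t)_{\sharp}\bar{\mu}$. Since both flows start from the same $\bar{\mu}$, the map $(X^{0}_t,X^{\eta}_t)_{\sharp}\bar{\mu}$ is a coupling of $\mu_t$ and $\mu^{\eta}_t$, whence setting
\[
E(t):=\int_{\clD}|X^{0}_t(x)-X^{\eta}_t(x)|^2\,\bar{\mu}(dx)
\]
gives $\cW_2(\mu_t,\mu^{\eta}_t)^2\le E(t)$, and it suffices to bound $E$.

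Differentiating and splitting the integrand as
\[
v_t(X^{0}_t)-v^{\eta}_t(X^{\eta}_t)=\bigl(v_t(X^{0}_t)-v_t(X^{\eta}_t)\bigr)+\bigl(v_t(X^{\eta}_t)-v^{\eta}_t(X^{\eta}_t)\bigr),
\]
the first bracket is controlled by the spatial Lipschitz bound \eqref{xLip} and contributes at most $2A_3E(t)$. For the second bracket, evaluated at the common point $X^{\eta}_t$, I would further decompose
\[
v_t-v^{\eta}_t=\bigl(\nabla S^{\eta}[\mu^{\eta}_t]-\nabla S_\delta[\mu^{\eta}_t]\bigr)+\bigl(\nabla S_\delta[\mu^{\eta}_t]-\nabla S_\delta[\mu_t]\bigr),
\]
where the first term is the filtering error, bounded in the uniform norm by $A_4\eta^{1/2-\gamma}$ via \eqref{etaApprox}, and the second is the measure–dependence, bounded pointwise by $\|p^r_\delta\|_{C^1(\clD\times\clD)}\,\|\cT_{\mu^{\eta}_t}-\cT_{\mu_t}\|_{L^1(D)}$ as in the proof of \eqref{muLip}. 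Using $\bar{\mu}$ a probability measure and Cauchy--Schwarz, these two terms contribute at most $2E(t)^{1/2}\bigl(A_4\eta^{1/2-\gamma}+\|p^r_\delta\|_{C^1}\|\cT_{\mu^{\eta}_t}-\cT_{\mu_t}\|_{L^1}\bigr)$.

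The one ingredient not already supplied by \lref{ApproxLem} is the $\cW_2$-to-$L^1$ Lipschitz continuity of the raw sensitivity $\cT_{\mu}=S[\mu p_\delta]$, given by \eqref{heatSensitivity} or its elastic analogue; this is the analytic heart of the proof. I would establish it in two steps. First, since $\mu p^r_\delta(x)=\int_D p^r_\delta(x,y)\,\mu(dy)$ with $p^r_\delta(x,\cdot)$ Lipschitz, Kantorovich--Rubinstein duality gives $\|\mu p_\delta-\nu p_\delta\|_{C^1(\clD)}\le C\,\cW_1(\mu,\nu)\le C\,\cW_2(\mu,\nu)$, so the coefficient $\kappa(\mu p_\delta)$ depends Lipschitz-continuously on $\mu$ in the uniform norm and stays uniformly elliptic on $\cD_\delta$. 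Second, the standard energy estimate for the state equation, obtained by subtracting the two weak forms \eqref{gov1} and testing with $u_{\rho_1}-u_{\rho_2}$, yields $\|\nabla u_{\rho_1}-\nabla u_{\rho_2}\|_{L^2}\le C\|\kappa(\rho_1)-\kappa(\rho_2)\|_{L^{\infty}}$, while uniform ellipticity bounds $\|\nabla u_{\rho}\|_{L^2}$ independently of $\rho\in\cD_\delta$. Writing the sensitivity difference as $\kappa'(\rho_1)\bigl(|\nabla u_1|^2-|\nabla u_2|^2\bigr)+\bigl(\kappa'(\rho_1)-\kappa'(\rho_2)\bigr)|\nabla u_2|^2$ and invoking the boundedness of $\kappa'$, the Lipschitzness of $\kappa'$ (from the $\kappa''$ bound) and Cauchy--Schwarz then gives $\|\cT_{\mu}-\cT_{\nu}\|_{L^1}\le C\,\cW_2(\mu,\nu)$; the identical computation with $\ep(u)\colon\sigma(u)$ treats the compliance case in Subsection \ref{mincomp}, and the same bounds furnish the uniform estimate $\|\cT_{\mu}\|_{L^1}\le A_1$ required to apply \lref{ApproxLem}.

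Combining the three contributions yields, with $\beta:=A_3+C$ and $b:=A_4\eta^{1/2-\gamma}$,
\[
\frac{d}{dt}E(t)\le 2\beta E(t)+2b\,E(t)^{1/2}.
\]
Comparing with the solution of $\dot{z}=\beta z+b$, $z(0)=0$, after the substitution $E=z^2$ (a routine regularization handles the set where $E=0$), gives $\cW_2(\mu_t,\mu^{\eta}_t)\le E(t)^{1/2}\le z(t)=b\,\frac{e^{\beta t}-1}{\beta}\le b\,t\,e^{\beta t}$, which is \eqref{order} after renaming constants, with $A$ depending only on $\delta,\ep,\gamma,D,f,g,\kappa$ as claimed; the final $\sup_t$ statement is immediate. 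I expect the main obstacle to be precisely the $\cW_2$-to-$L^1$ Lipschitz estimate for the sensitivity, namely the elliptic energy estimate combined with the heat-kernel smoothing, since everything else is either quoted from \lref{ApproxLem} or is a standard characteristic-flow and Gronwall argument; a secondary technical point is the rigorous justification of the representation $\mu^{\eta}_t=(X^{\eta}_t)_{\sharp}\bar{\mu}$, which rests on the spatial regularity of $S^{\eta}[\mu^{\eta}_t]$ for each fixed $\eta$.
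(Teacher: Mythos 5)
Your argument is correct, and its analytic core coincides with the paper's: the same three-way decomposition of the velocity discrepancy (filtering error via \eqref{etaApprox}, measure-dependence via \eqref{muLip}, spatial variation via \eqref{xLip}), and the same key estimate $\|S[\mu p_{\delta}]-S[\nu p_{\delta}]\|_{L^1(D)}\leq A\,\cW_1(\mu,\nu)$ obtained from Kantorovich duality for the mollified coefficient plus an energy estimate for the difference of states (your subtraction of the two weak forms of \eqref{gov1} is in fact slightly cleaner than the paper's strong-form manipulation, which carries extra boundary terms on $\Gamma_1$; both yield $\|\nabla(u_{\rho_1}-u_{\rho_2})\|_{L^2}\leq C\|\kappa(\rho_1)-\kappa(\rho_2)\|_{L^{\infty}}$). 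Where you genuinely diverge is the outer Gronwall mechanism: the paper stays Eulerian, differentiating $\cW_2(\mu_t^{\eta},\mu_t)^2$ along the optimal coupling $\pi_t^{\eta}$ via the derivative formula of Ambrosio--Gigli--Savar\'e (Theorem 8.4.7 and Proposition 8.5.4 there), whereas you pass to the Lagrangian picture, couple the two curves through the pair of flow maps started from the common $\bar\mu$, and run Gronwall on that (generally non-optimal) coupling. Your route buys a more elementary ODE comparison and avoids the metric-derivative formula, but it pays for this with the representation $\mu_t^{\eta}=(X_t^{\eta})_{\sharp}\bar\mu$: this needs a uniform-in-$t$ spatial Lipschitz bound on $\nabla S^{\eta}[\mu_t^{\eta}]$ (which for $\eta>0$ comes from the Schauder estimate in the proof of \lref{ApproxLem}, with a constant that degenerates as $\eta\to 0$ but is harmless since each $\eta$ is treated separately, and for $\eta=0$ from \eqref{xLip}), the tangency of the Neumann field to $\partial D$ so the flow preserves $\clD$, and a uniqueness/superposition principle for the continuity equation; you flag this but it is the one step that would need to be written out to make the proof complete. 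The paper's Eulerian argument sidesteps all of this at the cost of invoking the cited differentiability results.
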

\begin{proof}
Let $\pi_t^{\eta}$ be an optimal coupling of $\mu_t^{\eta}$ and $\mu_t$ with respect to the $2$-Wasserstein distance. Then we have the differential formula (Theorem 8.4.7, Remark 8.4.8, and Proposition 8.5.4 in \cite{AGS21})
\begin{align*}
\frac{d}{dt}\cW_2(\mu^{\eta}_t,\mu_t)^2&=\int_{\clD\times \clD}\langle x-y, \nabla S^{\eta}[\mu^{\eta}_t](x)-\nabla S[\mu_t](y) \rangle \, d\pi^{\eta}_t(dxdy)\\
&\leq \cW_2(\mu^{\eta}_t,\mu_t) \left( \int_{\clD \times \clD} |\nabla S^{\eta}[\mu^{\eta}_t](x)-\nabla S[\mu_t](y)|^2 \, \pi^{\eta}_t(dxdy) \right)^{\frac{1}{2}}.
\end{align*}
We set
\begin{align*}
I_1&:= \int_{\clD \times \clD} |\nabla S^{\eta}[\mu^{\eta}_t](x)-\nabla S[\mu^{\eta}_t](x)|^2 \, \pi^{\eta}_t(dxdy),\\
I_2&:= \int_{\clD \times \clD} |\nabla S[\mu^{\eta}_t](x)-\nabla S[\mu_t](x)|^2 \, \pi^{\eta}_t(dxdy),\\
I_3&:= \int_{\clD \times \clD} \nabla S[\mu_t](x)-\nabla S[\mu_t](y)|^2 \, \pi^{\eta}_t(dxdy).
\end{align*}
We apply \lref{ApproxLem} by setting $T_{\mu}:=S[\mu p_{\delta}]$. First we show the uniform $L^1$-bound of $T_{\mu}$. We assume the setting of Subsection \ref{minheat} and $J$ is defined by \eqref{conductance}. We set
\begin{align*}
k_{\delta}&:=\kappa(c_1),\\
K_{\delta}&:=\kappa(c_2),
\end{align*}
where $c_1$ and $c_2$ are as given by \eqref{infHK} and \eqref{supHK}, respectively. We also denote $\kappa(\mu p_{\delta})$ and $\kappa'(\mu p_{\delta})$ by $\kappa_{\mu}$ and $\kappa'_{\mu}$, respectively. Then we have
\begin{align*}
k_{\delta}\int_D |\nabla u_{\mu p_{\delta}}|^2 \, dz &\leq \int_D \kappa_{\mu} |\nabla u_{\mu p_{\delta}}|^2\, dz\\
&=\int_D fu_{\mu p_{\delta}} + \int_{\Gamma_1}g u_{\mu p_{\delta}}\\
&\leq \| u \|_{L^2(D)} \| f \|_{L^2(D)} + \| g \|_{L^2(\Gamma_1)} \| u \|_{H^1(D)}\\
&\leq C(\|f \|_{L^2(D)}+\| g \|_{L^2(\Gamma_1)}) \| \nabla u_{\mu p_{\delta}} \|_{L^2(D)},
\end{align*}
where we have used the continuity the trace operator in the second inequality and the Poincar\'e inequality in the third inequality, which is valid due to $|\Gamma_0| > 0$, and $C$ is a constant associated with it. Thus we obtain the uniform bound
\begin{align}
\|\nabla u_{\mu p_{\delta}}\|_{L^2(D)}\leq \frac{C}{k_{\delta}}(\|f\|_{L^2(D)}+\|g\|_{L^2(\Gamma_1)}).\label{DuL2}
\end{align}
Therefore, we have
\begin{align*}
\| S[\mu p_{\delta}]\|_{L^1(D)} &\leq \| \kappa'\|_{L^{\infty}([c_1,c_2])} \| \nabla u_{\mu p_{\delta}}\|^2_{L^2(D)}\\
&\leq \frac{\| \kappa'\|_{L^{\infty}([c_1,c_2])}C^2}{k_{\delta}^2} (\|f \|_{L^2(D)}+\| g \|_{L^2(D)})^2.
\end{align*}
Thus by applying \lref{ApproxLem}, for each fixed $\gamma \in (0,\frac{1}{2})$, there exists $A>0$ such that
\begin{align*}
I_1 &= \int_{\clD} |\nabla S^{\eta}_{\delta}[\mu_t^{\eta}]-\nabla S_{\delta}[\mu_t]|^2 \, d\mu^{\eta}_t \leq A \eta^{1-2\gamma},\\
I_2 &\leq A \| S[\mu_t p_{\delta}]-S[\mu^{\eta}_tp_{\delta}] \|^2_{L^1(D)},\\
I_3 &\leq A \int_{\clD\times \clD}|x-y|^2\, \pi^{\eta}_t(dxdy)=A \cW_2(\mu_t,\mu^{\eta}_t)^2.
\end{align*}
Hereafter, we denote constants depending only on $D$, $\gamma$, $\delta$, $\ep$, $f$, $g$ and $\kappa$ by the same notation $A$ to simplify the argument.
Next, we estimate the upper bound of $I_2$. It holds that
\begin{align*}
\| S[\mu_t p_{\delta}]-S[\mu^{\eta}p_{\delta}] \|_{L^1(D)}&\leq \int_D|\kappa'_{\mu}-\kappa'_{\nu}||\nabla u_{\mu p_{\delta}}|^2 + |\kappa'_{\nu}|||\nabla u_{\mu p_{\delta}}|^2-|\nabla u_{\nu p_{\delta}}|^2|.
\end{align*}
We denote the first and second terms by $H_1$ and $H_2$, respectively. Note that it holds that
\begin{align}\label{kappaWass}
\|\kappa_{\mu}-\kappa_{\nu} \|_{L^{\infty}(D)} & \leq \|\kappa'\|_{L^{\infty}([c_1,c_2])}\|\mu p_{\delta}-\nu p_{\delta} \|_{C(\clD)} \nonumber \\
&\leq \|\kappa'\|_{L^{\infty}([c_1,c_2])} \|p^r_{\delta} \|_{C^1(\clD \times \clD)}\cW_1(\mu,\nu),
\end{align}
where we used the Kantorovich duality in the second inequality. In the same way, we have
\begin{align}\label{kappa'Wass}
\|\kappa'_{\mu}-\kappa'_{\nu} \|_{L^{\infty}(D)} \leq \|\kappa''\|_{L^{\infty}([c_1,c_2])}\|p^r_{\delta} \|_{C^1(\clD \times \clD)}\cW_1(\mu,\nu).
\end{align}
Thus we have
\begin{align*}
H_1 \leq A \cW_1(\mu,\nu)
\end{align*}
for some $A$ by \eqref{DuL2}. Next, if we let $v=u_{\mu p_{\delta}}-u_{\nu p_{\delta}}$, then $v$ satisfies
\begin{align*}
\begin{cases}
\mathrm{div}\kappa_{\mu}\nabla v =\mathrm{div}(\kappa_{\mu}-\kappa_{\nu})\nabla u_{\mu p_{\delta}}\ &\text{in}\ D,\\
v=0\ &\text{on}\ \Gamma_0,\\
\partial_n v=(\kappa_{\mu}^{-1}-\kappa_{\nu}^{-1})g\ &\text{on}\ \Gamma_1,\\
\partial_n v=0\ &\text{on}\ \Gamma_2.
\end{cases}
\end{align*}
Therefore, for any $\phi \in C^{\infty}(D)$, it holds that
\[
\int_D\kappa_{\mu}\nabla v \cdot \nabla \phi \, dx=-\int_D (\kappa_{\mu}-\kappa_{\nu})\nabla u_{\mu p_{\delta}}\cdot \nabla \phi \, dx + \int_{\Gamma_1}\phi\{(\kappa_{\mu}-\kappa_{\nu})\kappa_{\nu}^{-1}+\kappa_{\mu}(\kappa_{\mu}^{-1}+\kappa_{\nu}^{-1}) \}g.
\]
Taking $\phi=v$, we have
\begin{align*}
\|\nabla v\|_{L^2(D)}^2 &\leq k_{\delta}^{-1} \int_D \kappa_{\mu}|\nabla v|^2\\
&\leq \| \kappa_{\mu}-\kappa_{\nu}\|_{L^{\infty}(D)}\|\nabla u_{\mu p_{\delta}}\|_{L^2(D)} \|\nabla v\|_{L^2(D)}\\
&\h +\left( k_{\delta}^{-1}+\frac{K_{\delta}}{k_{\delta}^2} \right)\|v\|_{H^1(D)}\|g\|_{L^2(\Gamma_1)} \|\kappa_{\mu}-\kappa_{\nu}\|_{L^{\infty}(D)}.
\end{align*}
By the Poincar\'e inequality and \eqref{DuL2}, we have
\[
\|\nabla v\|_{L^2(D)} \leq A \|\kappa_{\mu}-\kappa_{\nu}\|_{L^{\infty}(D)}
\]
for some $A$. Combining with \eqref{kappaWass}, we have
\[
H_2 \leq A \cW_1(\mu,\nu).
\]
Therefore,
\[
\| S[p_{\delta} \mu]-S[\nu p_{\delta}] \|_{L^1(D)} \leq A\cW_1(\mu,\nu).
\]
Since $\cW_1(\mu,\nu)\leq \cW_2(\mu,\nu)$ by H\"{o}lder's inequality, we conclude that there exists a constant $A$ such that
\[
\frac{d}{dt}\cW_2(\mu_t,\mu^{\eta}_t) \leq A(\cW_2(\mu_t,\mu^{\eta}_t)+\eta^{\frac{1}{2}-\gamma})
\]
for a.e. $t\in [0,T]$. By integrating both sides and applying Gronwall's inequality, we obtain \eqref{order}. The same proof works for the setting of Subsection \ref{mincomp} by combining Korn's inequality with the Poincar\'e inequality.
\end{proof}
We adopt the filtered gradient flow \eqref{filteredGF} for the update of the distribution of mass. In practice, we fix the time step $\tau >0$ and solve the discretized equation
\begin{align}\label{update}
\int_D \psi(x) \frac{\rho_{i+1}(x)-\rho_i(x)}{\tau} \, dx - \int_D\rho_i(x) \nabla \psi(x)\cdot \nabla S^{\eta}_i(x)\, dx = 0,\ \psi \in H^1(D)
\end{align}
by the finite element method. We show the procedure of the computation in the following Algorithm in the case of the thermal diffusion. We assume that $f \in L^2(D)$ and $g \in L^2(\Gamma_1)$. For fixed $a>0$ and $p>0$, we set
\[
\kappa (s)=\left(\frac{1}{1-e^{-as}} \right)^p.
\]

\begin{algorithm}
\caption{Optimization of the density function}
\begin{enumerate}
\item Let $i=0$. Set $a, \delta, \ep, \eta, \tau>0$.
\item Solve the following equation for $\tilde \rho_i$:
\begin{align*}
\begin{cases}
\tilde{\rho_i}-\rho_i &= \delta \Delta \tilde \rho_i\ \text{in}\ D,\\
\partial_{\bn}\tilde{\rho_i}&=0\ \text{on}\ \partial{D}.
\end{cases}
\end{align*}
\item Solve the governing equation for $u_i$:
\begin{align*}
\begin{cases}
-\mathrm{div}(\kappa(\tilde{\rho_i}) \nabla u_i) &=f\ \text{in}\ D,\\
u_i&=0\ \text{on}\ \Gamma_0,\\
\partial_{\bn}u_i&=g\ \text{on}\ \Gamma_1,\\
\partial_{\bn}u_i&=0\ \text{on}\ \Gamma_2.
\end{cases}
\end{align*}
\item Set $S[\tilde{\rho_i}]$ to \eqref{heatSensitivity} and solve the equation for $S_{\delta}[\rho_i]$:
\begin{align*}
\begin{cases}
S_{\delta}[\rho_i]-S[\tilde{\rho_i}] &= \delta \Delta S_{\delta}[\rho_i]\ \text{in}\ D,\\
\partial_{\bn}S_{\delta}[\rho_i]&=0\ \text{on}\ \partial{D}.
\end{cases}
\end{align*}
\item Solve the following equation for $\bar \rho_i$:
\begin{align*}
\begin{cases}
\bar{\rho_i}-\rho_i &= \ep \Delta \bar \rho_i\ \text{in}\ D,\\
\partial_{\bn}\bar{\rho_i}&=0\ \text{on}\ \partial{D}.
\end{cases}
\end{align*}
\item Solve the following equation for $S^{\eta}_{\delta}[\rho_i]$.
\begin{align*}
\begin{cases}
S^{\eta}_{\delta}[\rho_i]-S_{\delta}[\rho_i] &= \eta \mathrm{div}(\bar \rho_i \nabla S^{\eta}_{\delta}[\rho_i])\ \text{in}\ D,\\
\partial_{\bn}S^{\eta}_{\delta}[\rho_i]&=0\ \text{on}\ \partial{D}.
\end{cases}
\end{align*}
\item Update the density by solving
\begin{align*}
\begin{cases}
\rho_{i+1}-\rho_i&=\tau \mathrm{div}(\rho_i \nabla S^{\eta}_{\delta}), \\
\partial_{\bn}\rho_{i+1}&=0.
\end{cases}
\end{align*}
\item Let $i=i+1$ and go back to (2).
\end{enumerate}
\end{algorithm}

\clearpage
\section{Numerical results}\label{results}
Based on Algorithm 1, we perform the numerical calculation in the setting of Subsection \ref{minheat} and \ref{mincomp}. In the algorithm of the optimization, we have parameters $a, \delta, \ep, \eta, \tau >0$. We construct candidates of optimal mass distribution with $\delta = 1.0 \times 10^{-2}, 1.0 \times 10^{-3}, 1.0 \times 10^{-4}$ and $\eta = 1.0 \times 10^{-2}, 1.0 \times 10^{-3}, 1.0 \times 10^{-4}$ and exhibit the figures of obtained results. Following the custom of the SIMP method, we take $p=3$. We fix $\ep=1.0 \times 10^{-7}$.

\subsection{Heat conduction}\label{heatresult}
First we exhibit the result of numerical calculation in the setting of Subsection \ref{minheat}.
We set $a=1.3$. The design domain is set to $D=[0,1]\times [0,1]$. The boundary for the zero-Dirichlet condition is set to $\Gamma_0=\{0\} \times [0.5-0.06,0.5+0.06]$. Then we set $\Gamma_1 = \emptyset$ and $\Gamma_2=\partial D \backslash \Gamma_0$, i.e. we do not impose the non-homogeneous Neumann boundary condition. We set the constant heat source $f=0.5$. We set the initial distribution to the uniform distribution, i.e. $\mu_0=dx$. The results are exhibited in Figure \ref{figmass1} below. Since we do not restrict the value of the density to the interval $[0,1]$, the range of those values varies for each $(\delta,\eta)$. However, we can still obtain the final shape of the material since the values of the densities are separated into $0$ and the higher values due to the penalization by $\kappa$ and $p=3$.
\begin{figure}[H]
   \centering
   \begin{minipage}[b]{0.3\textwidth}
      \includegraphics[width=5cm]{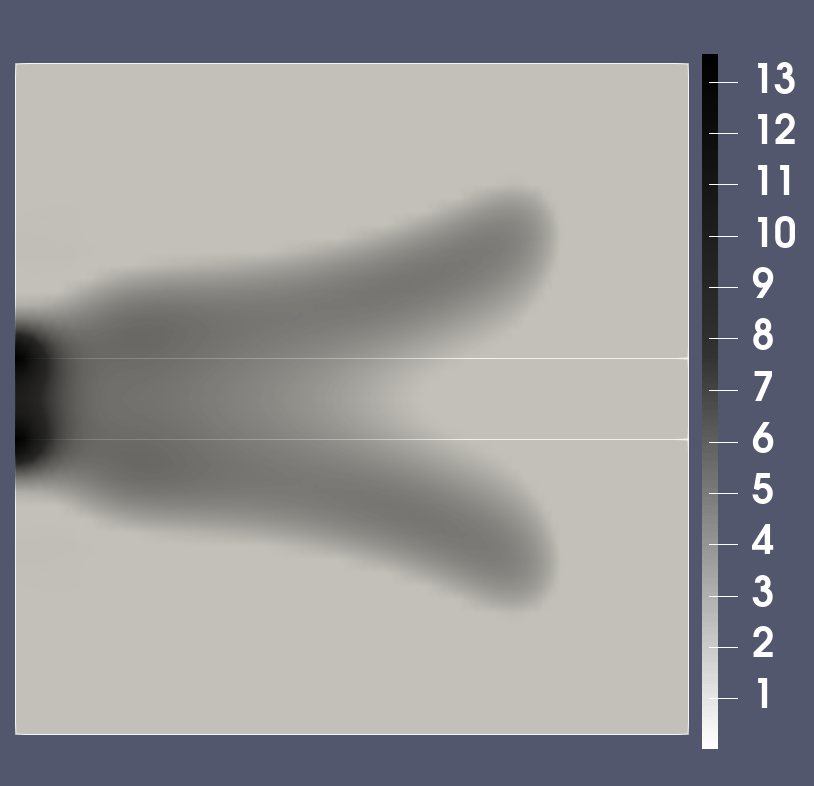}
      \caption*{$(\delta,\eta)=(10^{-2},10^{-2})$}
   \end{minipage}
   \h
   \begin{minipage}[b]{0.3\textwidth}
      \includegraphics[width=5cm]{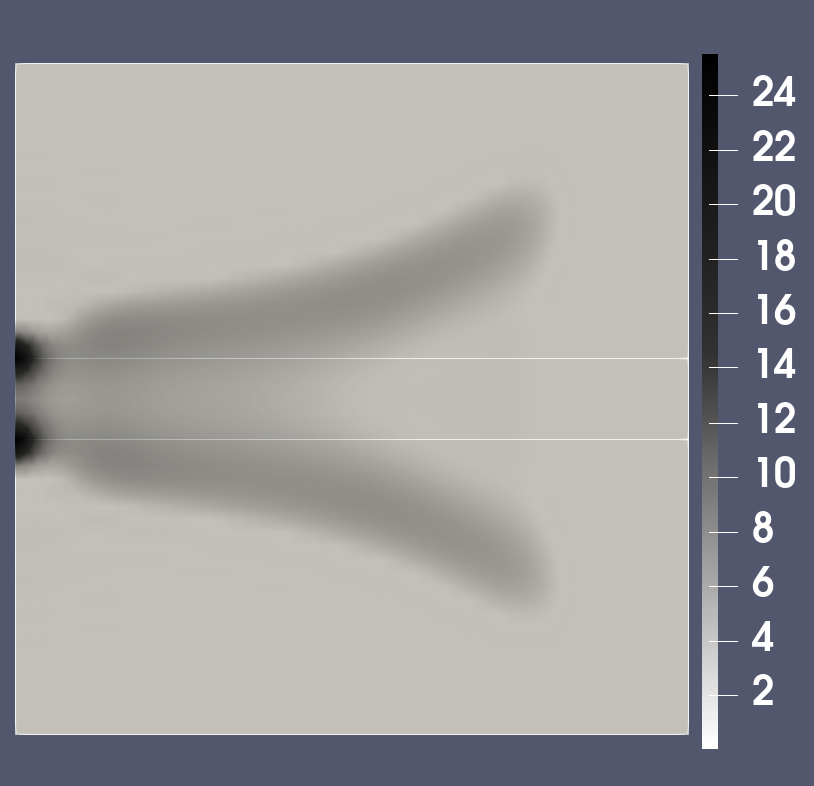}
      \caption*{$(\delta,\eta)=(10^{-2},10^{-3})$}
   \end{minipage}
   \h
   \begin{minipage}[b]{0.3\textwidth}
      \includegraphics[width=5cm]{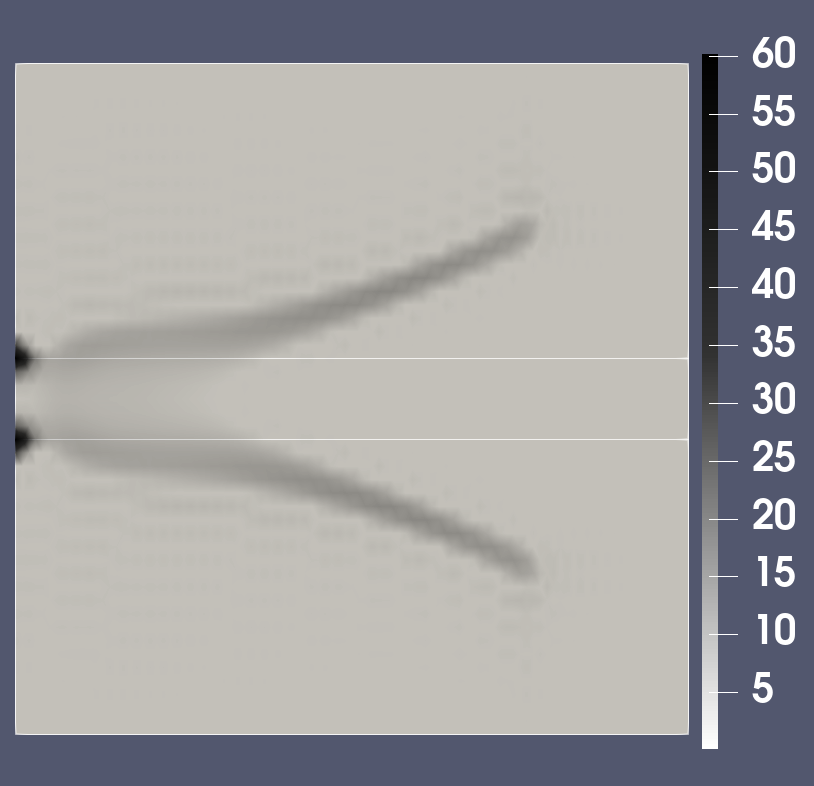}
      \caption*{$(\delta,\eta)=(10^{-2},10^{-4})$}
   \end{minipage}
\end{figure}

\begin{figure}[H]
   \centering
   \begin{minipage}[b]{0.3\textwidth}
      \includegraphics[width=5cm]{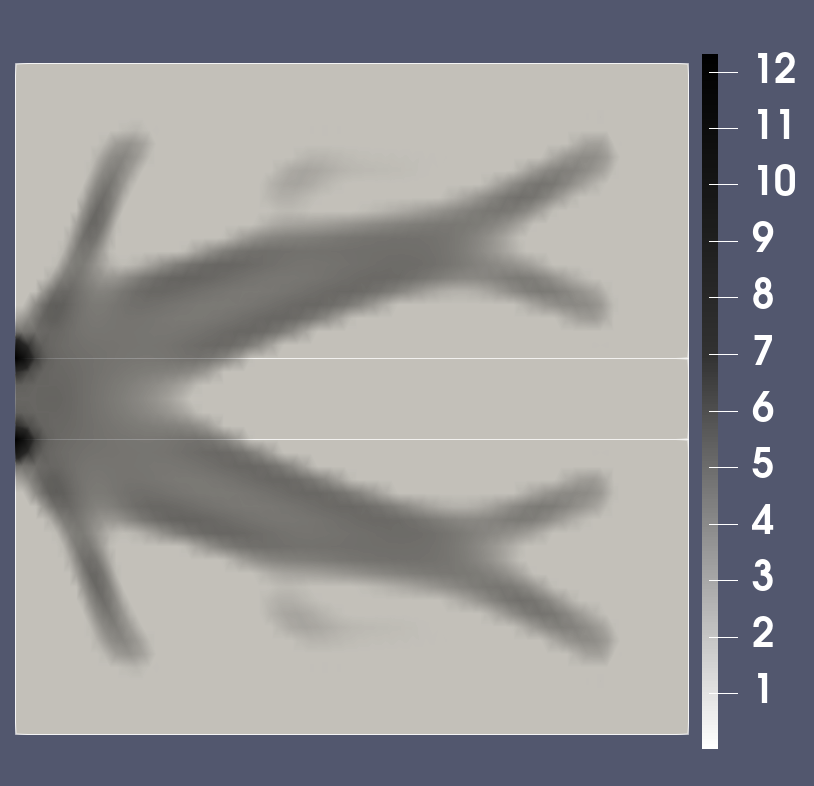}
      \caption*{$(\delta,\eta)=(10^{-3},10^{-2})$}
   \end{minipage}
   \h
   \begin{minipage}[b]{0.3\textwidth}
      \includegraphics[width=5cm]{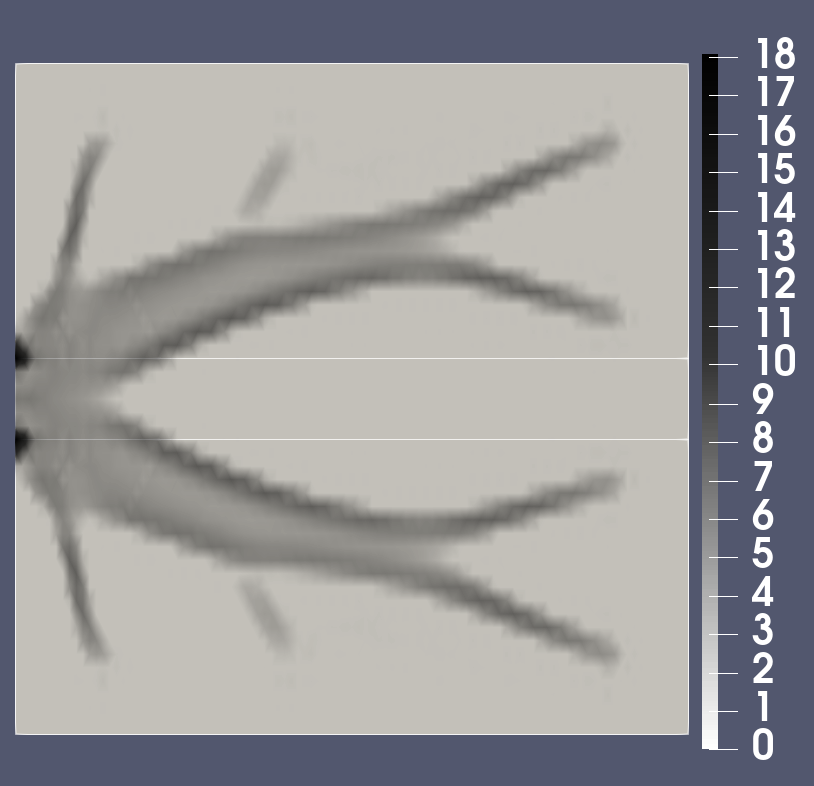}
      \caption*{$(\delta,\eta)=(10^{-3},10^{-3})$}
   \end{minipage}
   \h
   \begin{minipage}[b]{0.3\textwidth}
      \includegraphics[width=5cm]{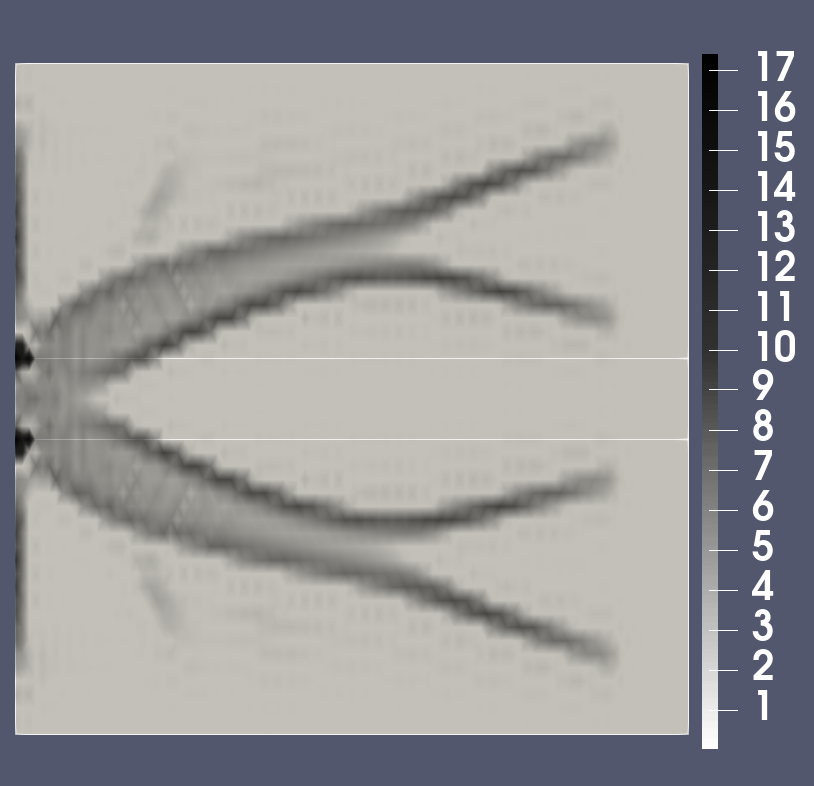}
      \caption*{$(\delta,\eta)=(10^{-3},10^{-4})$}
   \end{minipage}
\end{figure}

\begin{figure}[H]
   \centering
   \begin{minipage}[b]{0.3\textwidth}
      \includegraphics[width=5cm]{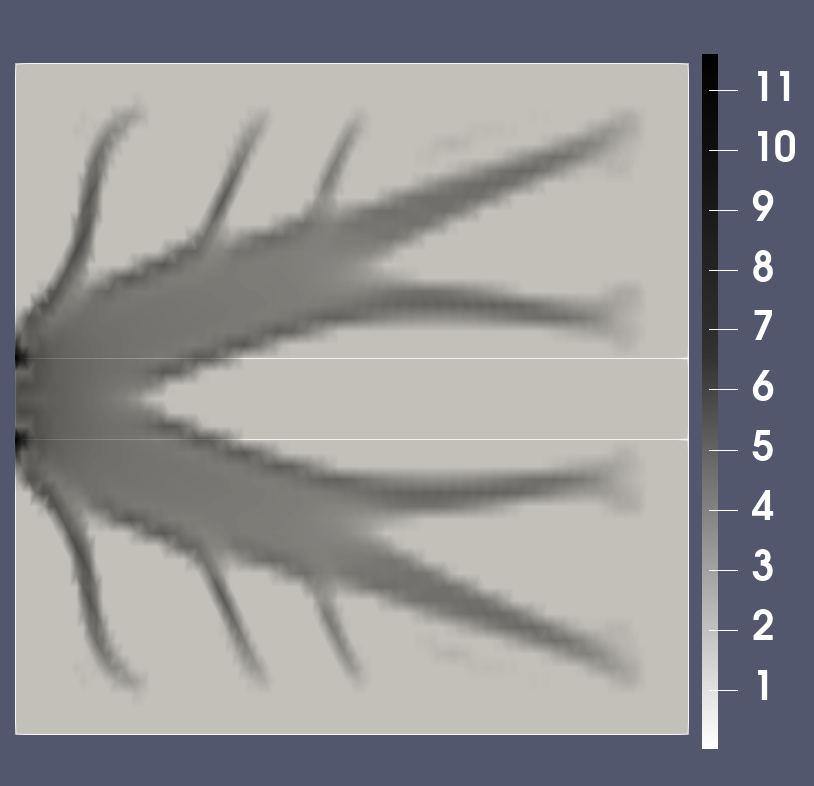}
      \caption*{$(\delta,\eta)=(10^{-4},10^{-2})$}
   \end{minipage}
   \h
   \begin{minipage}[b]{0.3\textwidth}
      \includegraphics[width=5cm]{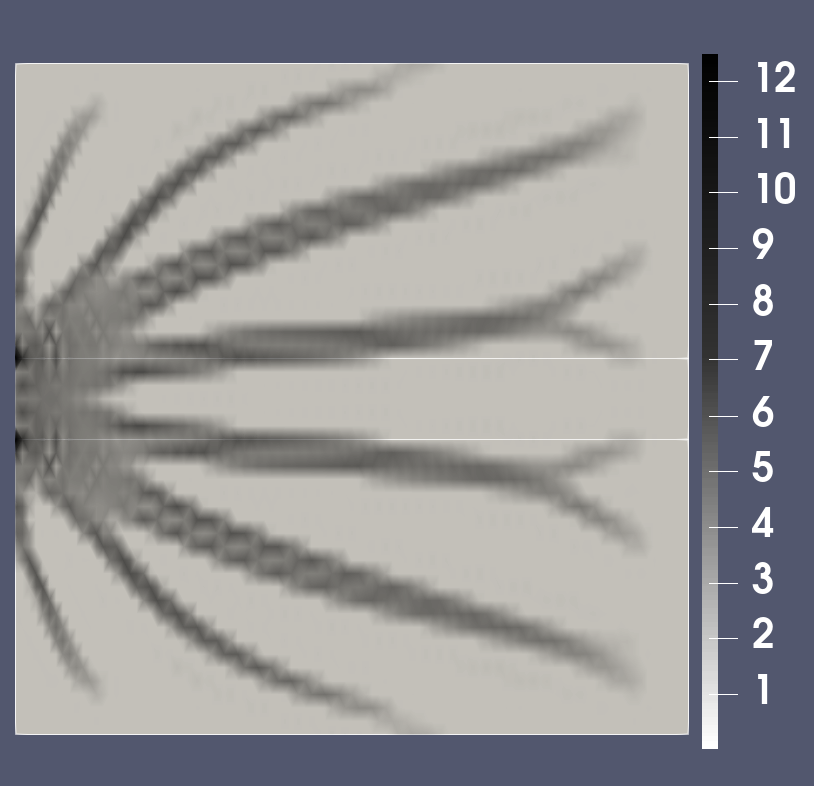}
      \caption*{$(\delta,\eta)=(10^{-4},10^{-3})$}
   \end{minipage}
   \h
   \begin{minipage}[b]{0.3\textwidth}
      \includegraphics[width=5cm]{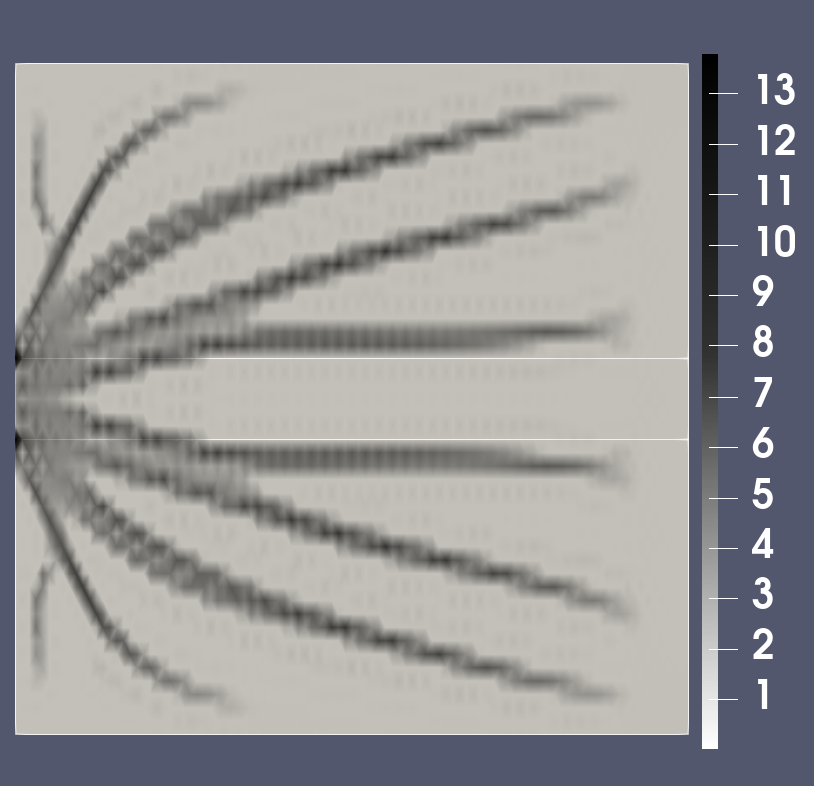}
      \caption*{$(\delta,\eta)=(10^{-4},10^{-4})$}
   \end{minipage}
\caption{Mass distribution}\label{figmass1}
\end{figure}
Figure \ref{figobj1} below describes how the objective function decreases for each pair $(\delta,\eta)$. Note that the time step $\tau$ is set differently for each $(\delta, \eta)$ since we do not care about the rate of convergence in this paper. In our numerical calculation, we set $\tau=1.0 \times 10^{-3}$ for $(\delta,\eta)=(10^{-2},10^{-2}), (10^{-2},10^{-3}), (10^{-2},10^{-4}), (10^{-3},10^{-2}), (10^{-3},10^{-3}), (10^{-3},10^{-4}), (10^{-4},10^{-2})$ and $\tau=3.0 \times 10^{-4}$ for $(\delta,\eta)=(10^{-4},10^{-3}),(10^{-4},10^{-4})$.
\begin{figure}[H]
   \centering
   \begin{minipage}[b]{0.3\textwidth}
      \includegraphics[width=5cm]{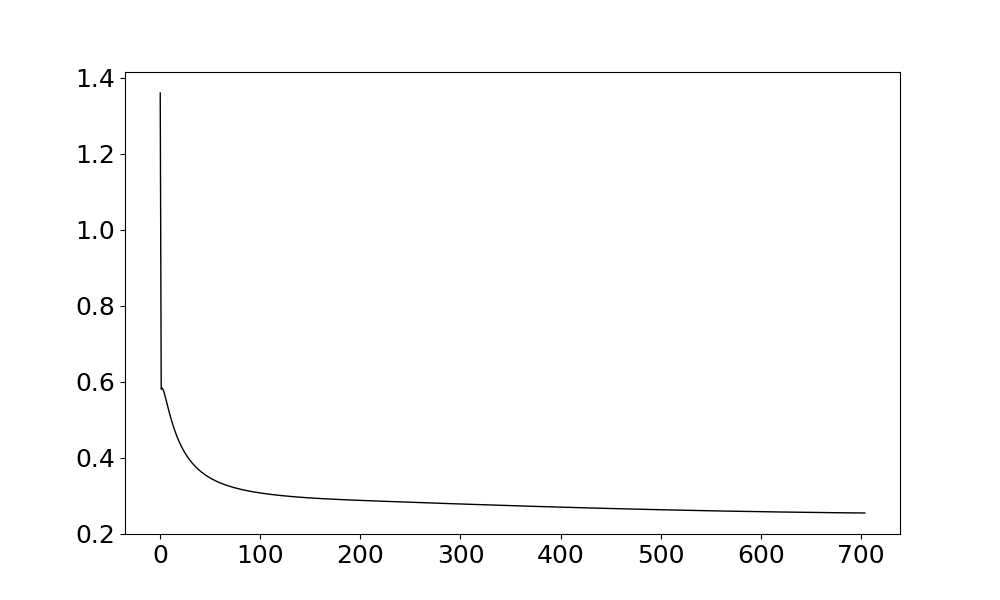}
      \caption*{$(\delta,\eta)=(10^{-2},10^{-2})$}
   \end{minipage}
   \h
   \begin{minipage}[b]{0.3\textwidth}
      \includegraphics[width=5cm]{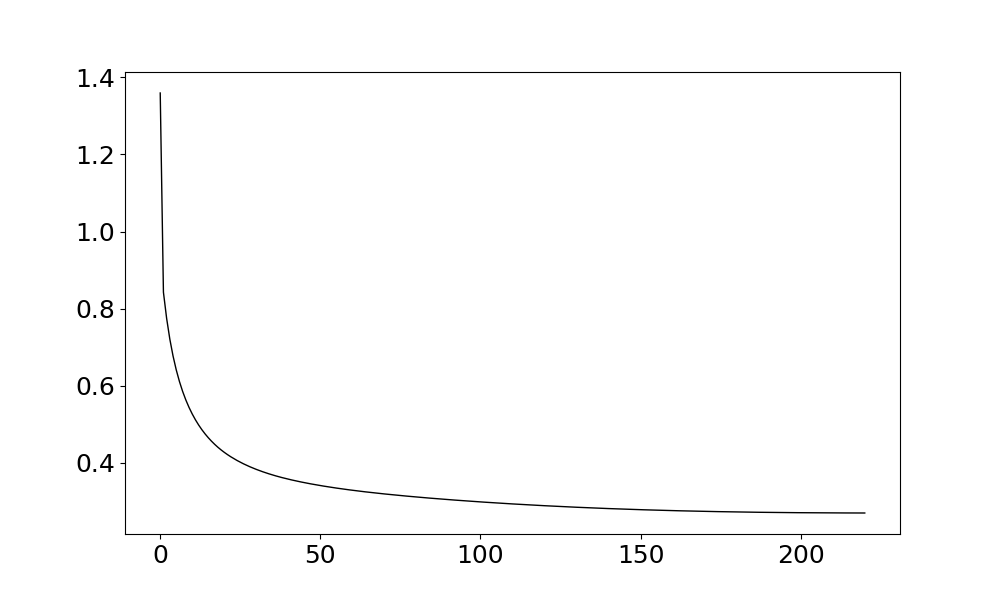}
      \caption*{$(\delta,\eta)=(10^{-2},10^{-3})$}
   \end{minipage}
   \h
   \begin{minipage}[b]{0.3\textwidth}
      \includegraphics[width=5cm]{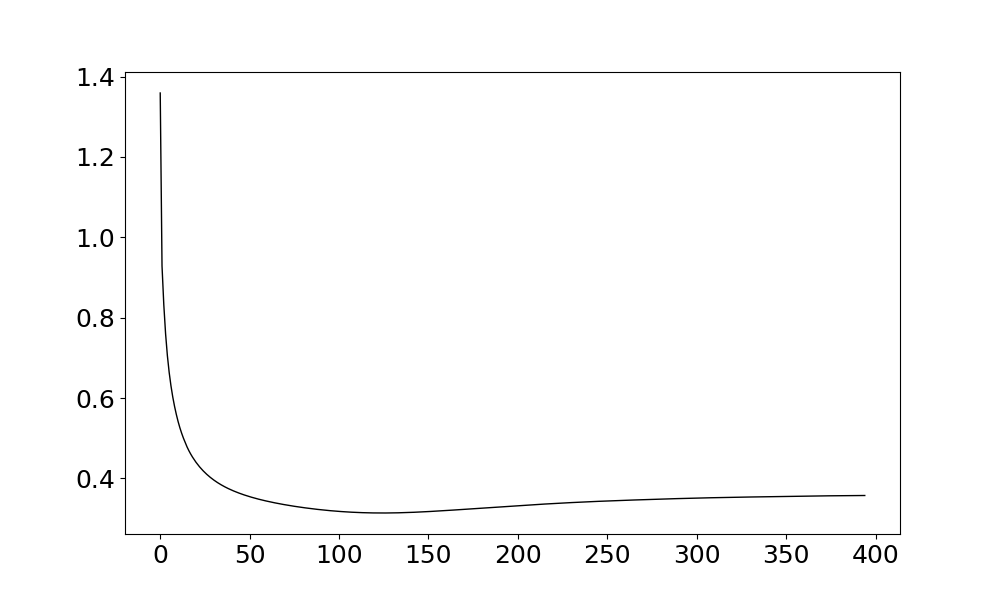}
      \caption*{$(\delta,\eta)=(10^{-2},10^{-4})$}
   \end{minipage}
\end{figure}

\begin{figure}[H]
   \centering
   \begin{minipage}[b]{0.3\textwidth}
      \includegraphics[width=5cm]{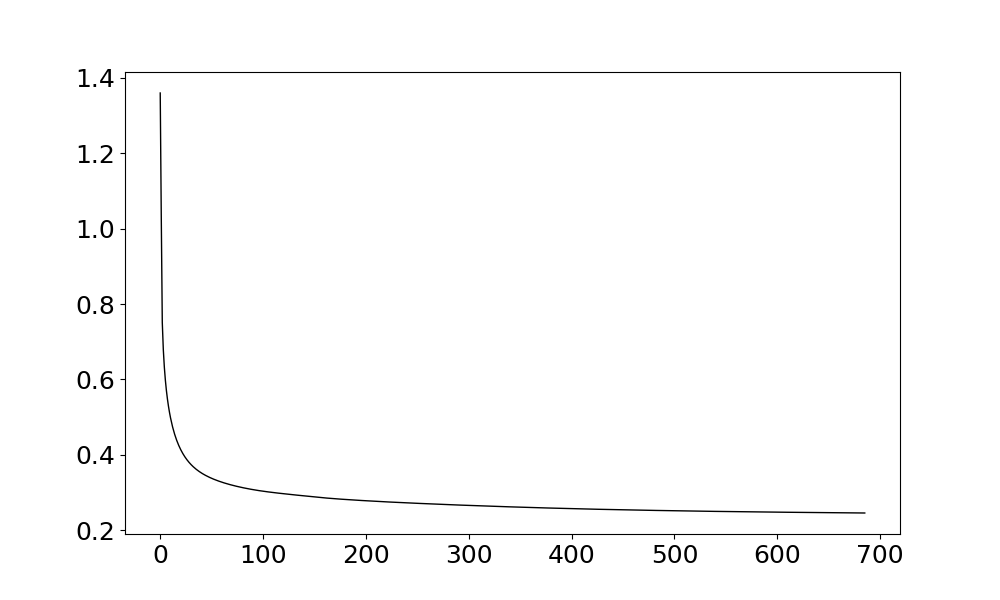}
      \caption*{$(\delta,\eta)=(10^{-3},10^{-2})$}
   \end{minipage}
   \h
   \begin{minipage}[b]{0.3\textwidth}
      \includegraphics[width=5cm]{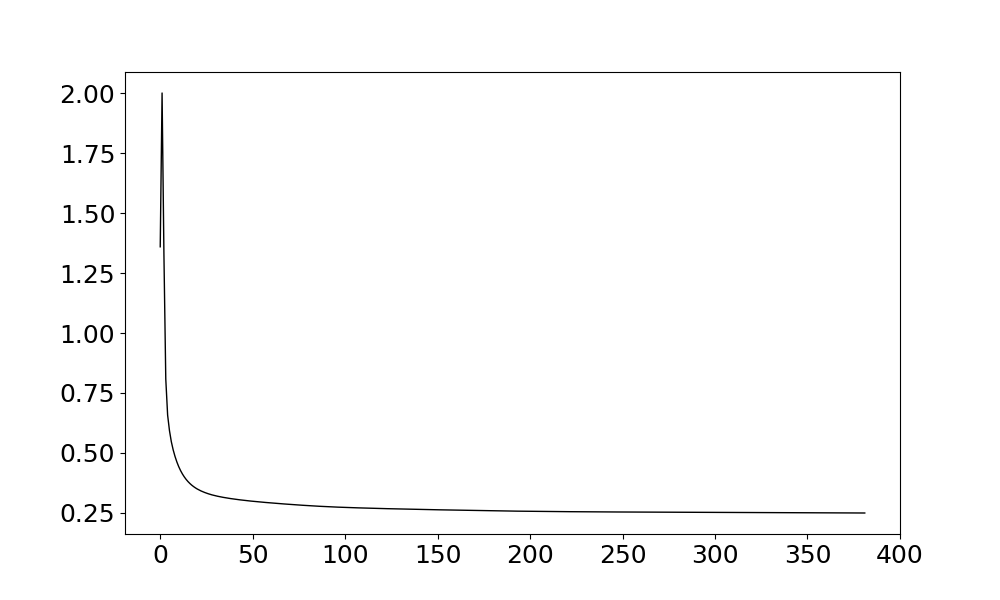}
      \caption*{$(\delta,\eta)=(10^{-3},10^{-3})$}
   \end{minipage}
   \h
   \begin{minipage}[b]{0.3\textwidth}
      \includegraphics[width=5cm]{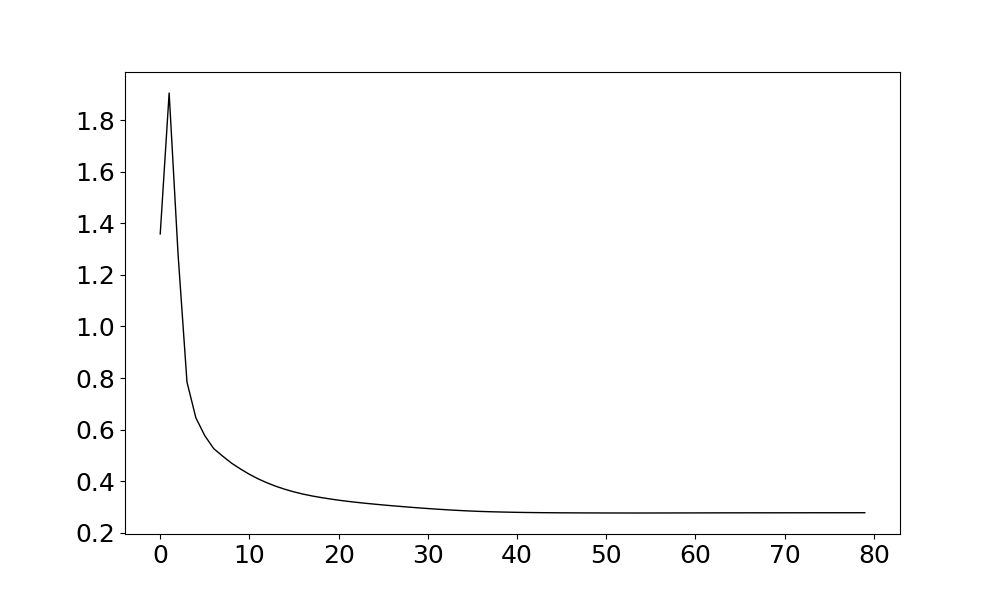}
      \caption*{$(\delta,\eta)=(10^{-3},10^{-4})$}
   \end{minipage}
\end{figure}

\begin{figure}[H]
   \centering
   \begin{minipage}[b]{0.3\textwidth}
      \includegraphics[width=5cm]{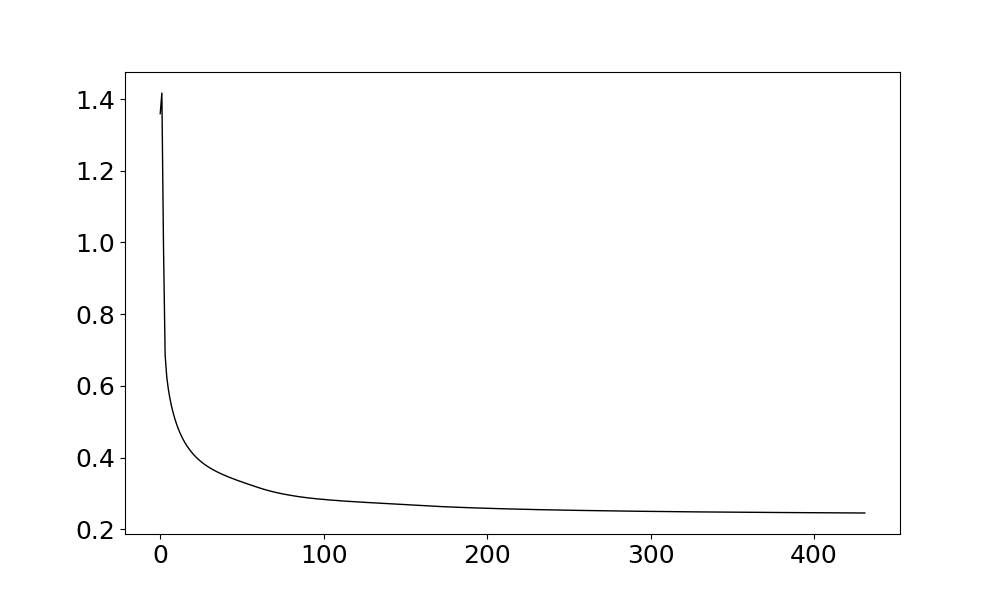}
      \caption*{$(\delta,\eta)=(10^{-4},10^{-2})$}
   \end{minipage}
   \h
   \begin{minipage}[b]{0.3\textwidth}
      \includegraphics[width=5cm]{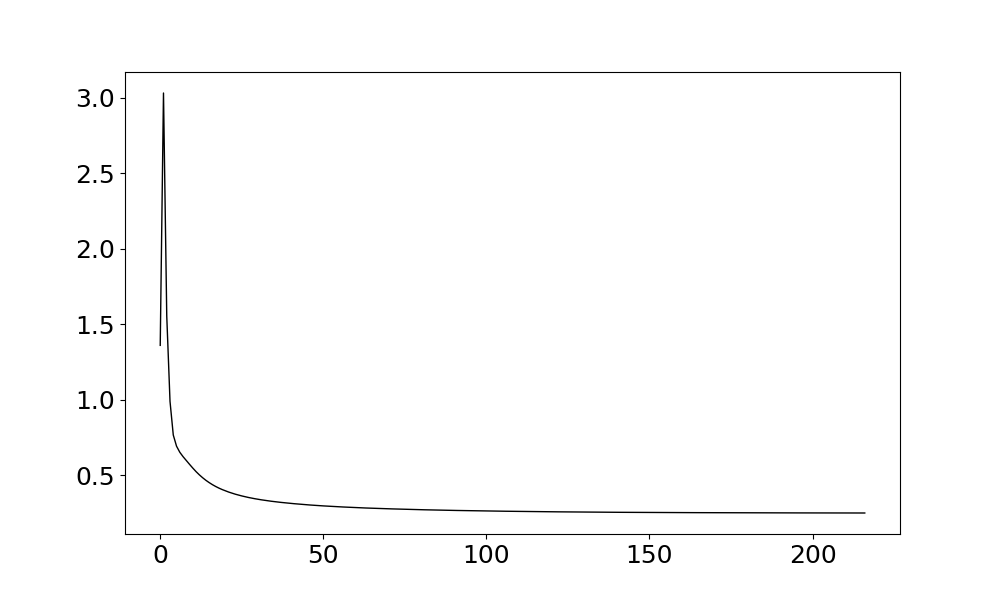}
      \caption*{$(\delta,\eta)=(10^{-4},10^{-3})$}
   \end{minipage}
   \h
   \begin{minipage}[b]{0.3\textwidth}
      \includegraphics[width=5cm]{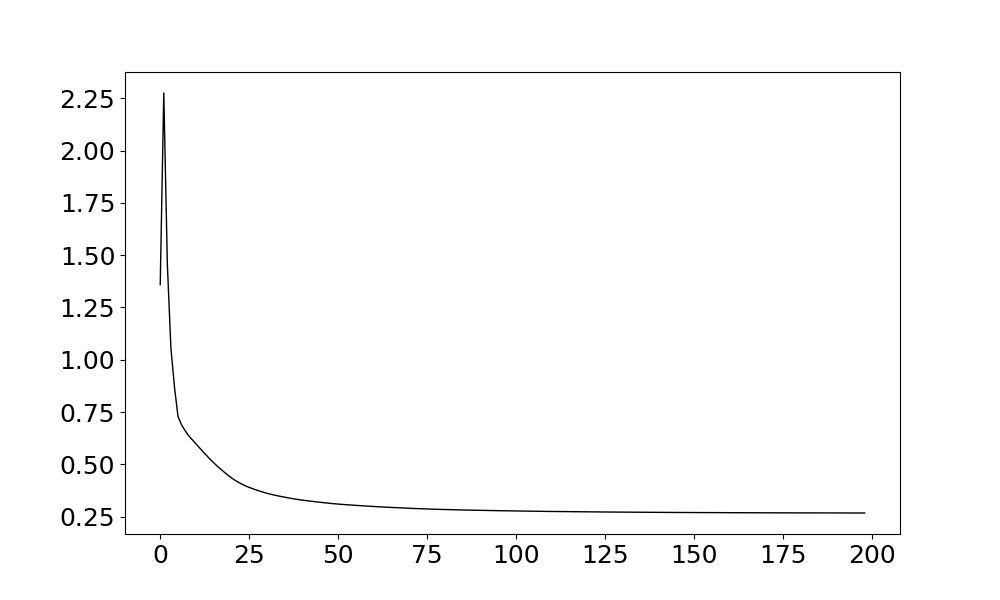}
      \caption*{$(\delta,\eta)=(10^{-4},10^{-4})$}
   \end{minipage}
\caption{Convergence history of the objective function}\label{figobj1}
\end{figure}
We also exhibit Figure \ref{figerr1} which describes the history of the total mass. The lateral axis indicates the step number and the vertical axis indicates the logarithm of the relative error with respect to the initial mass, namely, $\mathrm{log}\frac{\mu^{\eta}_t(\clD)}{\mu_0(\clD)}$. From these results, we observe that we can let the objective function decrease while keeping the total mass of the material.
\begin{figure}[H]
   \centering
   \begin{minipage}[b]{0.3\textwidth}
      \includegraphics[width=5cm]{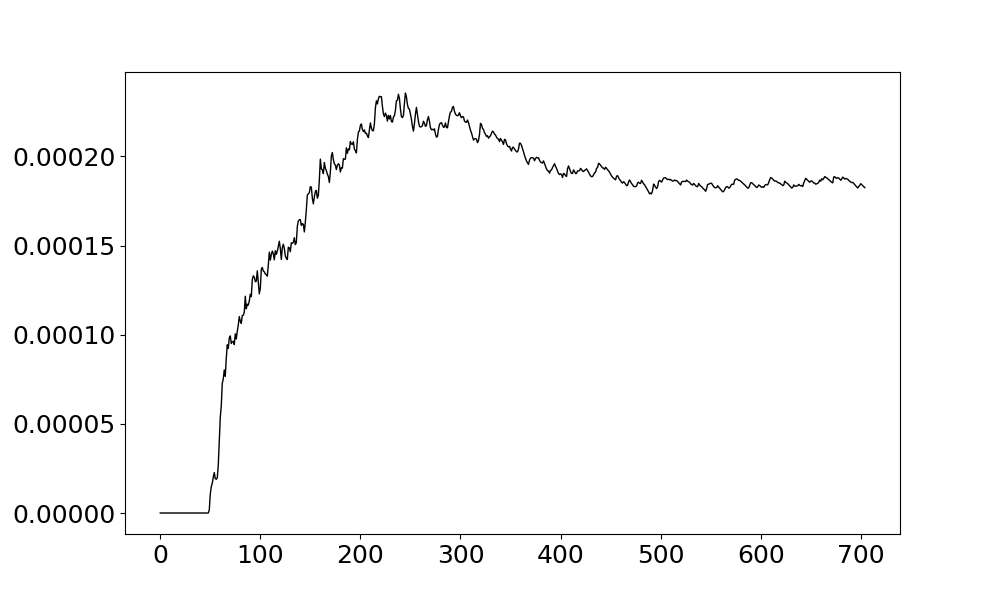}
      \caption*{$(\delta,\eta)=(10^{-2},10^{-2})$}
   \end{minipage}
   \h
   \begin{minipage}[b]{0.3\textwidth}
      \includegraphics[width=5cm]{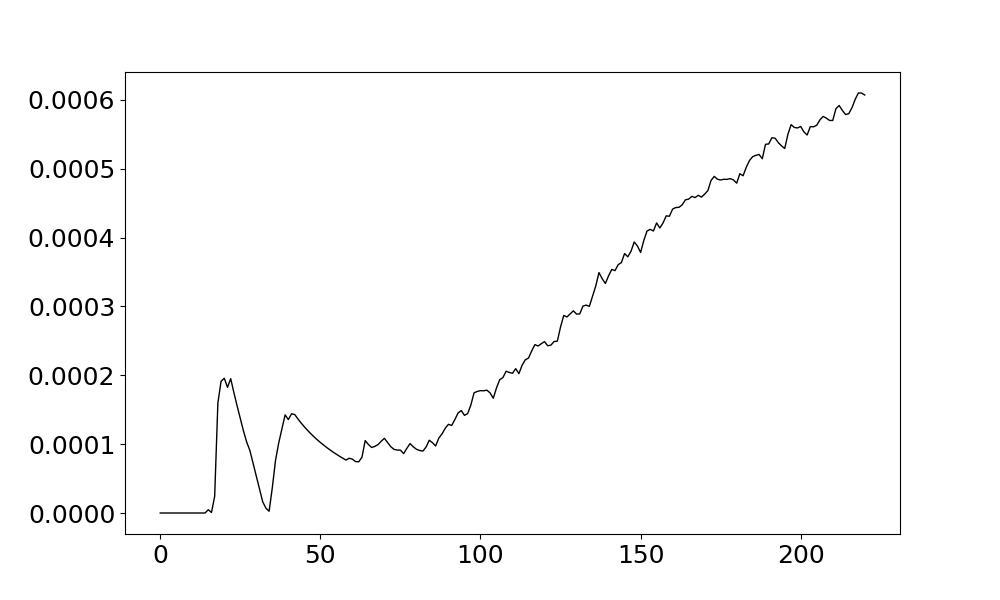}
      \caption*{$(\delta,\eta)=(10^{-2},10^{-3})$}
   \end{minipage}
   \h
   \begin{minipage}[b]{0.3\textwidth}
      \includegraphics[width=5cm]{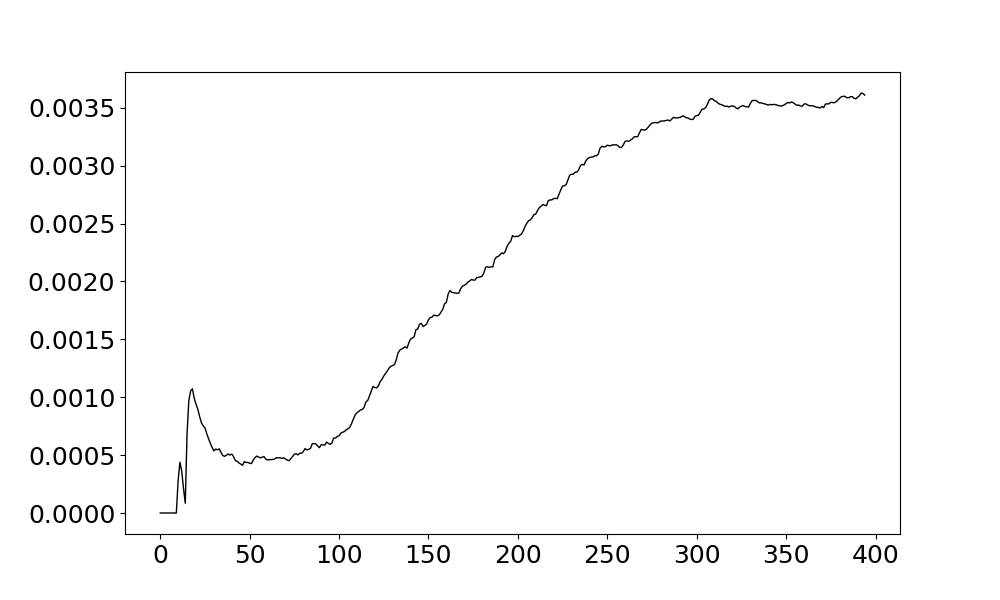}
      \caption*{$(\delta,\eta)=(10^{-2},10^{-4})$}
   \end{minipage}
\end{figure}

\begin{figure}[H]
   \centering
   \begin{minipage}[b]{0.3\textwidth}
      \includegraphics[width=5cm]{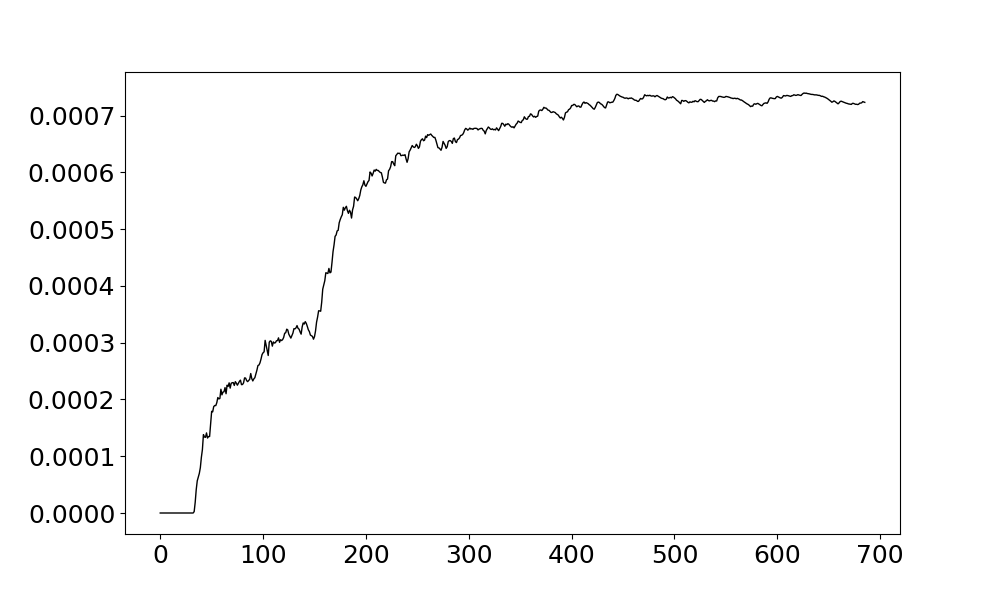}
      \caption*{$(\delta,\eta)=(10^{-3},10^{-2})$}
   \end{minipage}
   \h
   \begin{minipage}[b]{0.3\textwidth}
      \includegraphics[width=5cm]{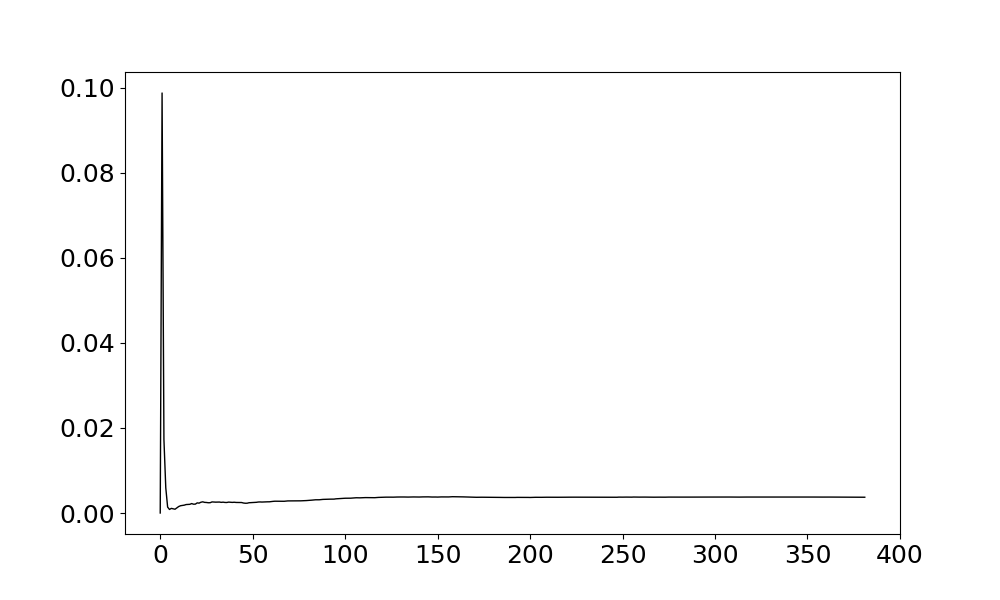}
      \caption*{$(\delta,\eta)=(10^{-3},10^{-3})$}
   \end{minipage}
   \h
   \begin{minipage}[b]{0.3\textwidth}
      \includegraphics[width=5cm]{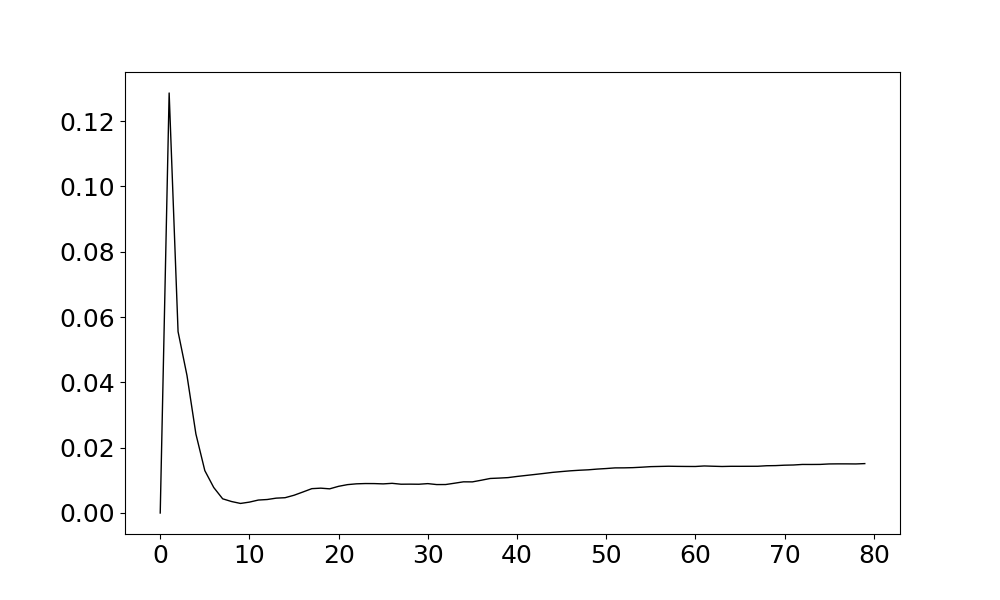}
      \caption*{$(\delta,\eta)=(10^{-3},10^{-4})$}
   \end{minipage}
\end{figure}

\begin{figure}[H]
   \centering
   \begin{minipage}[b]{0.3\textwidth}
      \includegraphics[width=5cm]{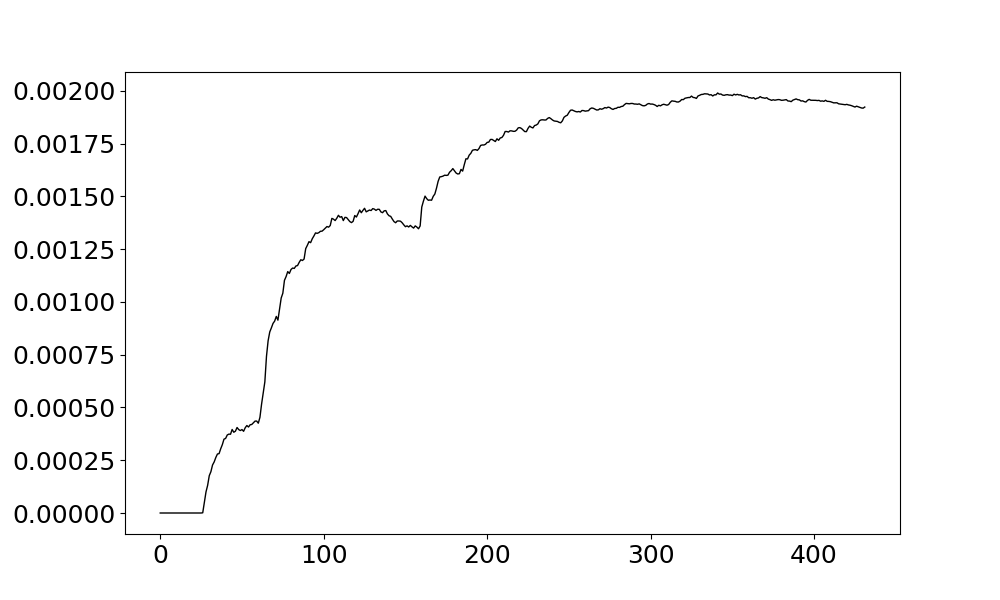}
      \caption*{$(\delta,\eta)=(10^{-4},10^{-2})$}
   \end{minipage}
   \h
   \begin{minipage}[b]{0.3\textwidth}
      \includegraphics[width=5cm]{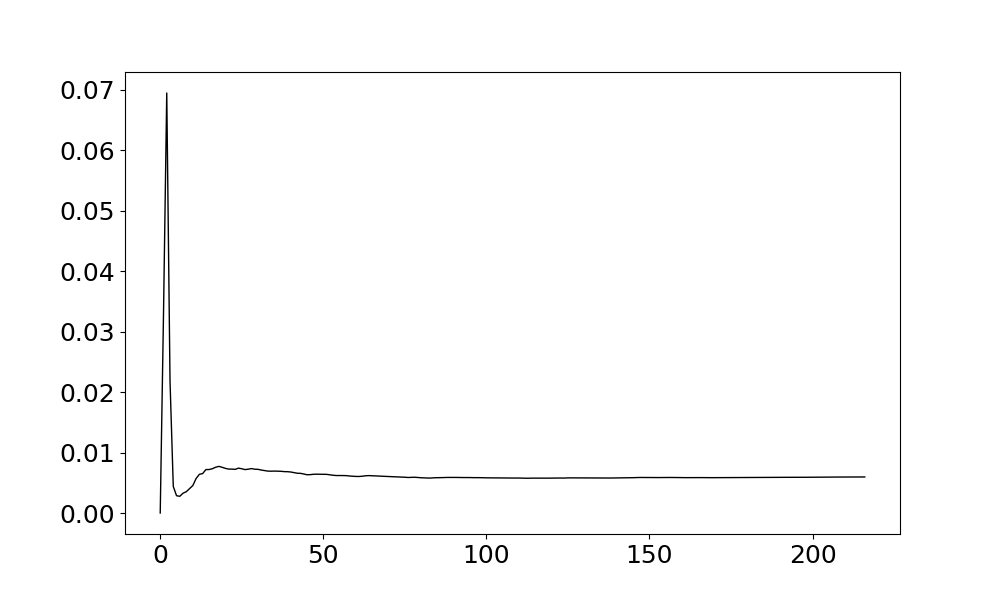}
      \caption*{$(\delta,\eta)=(10^{-4},10^{-3})$}
   \end{minipage}
   \h
   \begin{minipage}[b]{0.3\textwidth}
      \includegraphics[width=5cm]{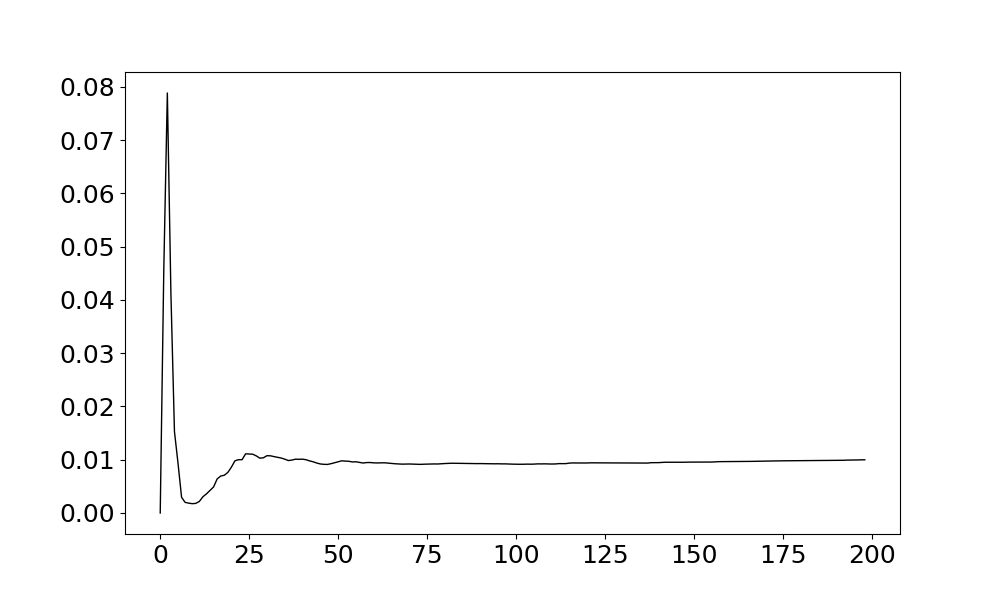}
      \caption*{$(\delta,\eta)=(10^{-4},10^{-4})$}
   \end{minipage}
\caption{Logarithmic relative error}\label{figerr1}
\end{figure}

\clearpage
\subsection{Mean compliance of linear elastic structures}\label{elasticresult}
We also exhibit the result in the setting of Subsection \ref{mincomp}.
We set $b=2.0$. We set the Lam\'e's constants $\lambda_1=\frac{15}{26}$, $\lambda_2=\frac{5}{13}$.
The design domain is set to $D=[0,2]\times [0,1]$ and
\begin{align*}
\Gamma_0&=\{0\} \times [0,1],\\
\Gamma_1&=\{1\} \times [0.5-0.06,0.5+0.06],\\
\Gamma_2&=\partial D \backslash (\Gamma_0 \cup \Gamma_1).
\end{align*}
We set the body force in $D$ to $f=0$ and the traction on $\Gamma_1$ to
\begin{align*}
g=
\begin{bmatrix}
 0 \\ -1 
\end{bmatrix}
\end{align*}
We set the initial mass $\mu_0=2dx$. We remark that $\mu_0$ is not a probability measure but we can derive the gradient flow just by replacing the functional $\cJ(\rho)$ with $\cJ(2\rho)$.
Figure \ref{figmass2} below show the results.
\begin{figure}[H]
   \centering
   \begin{minipage}[b]{0.3\textwidth}
      \includegraphics[width=5cm]{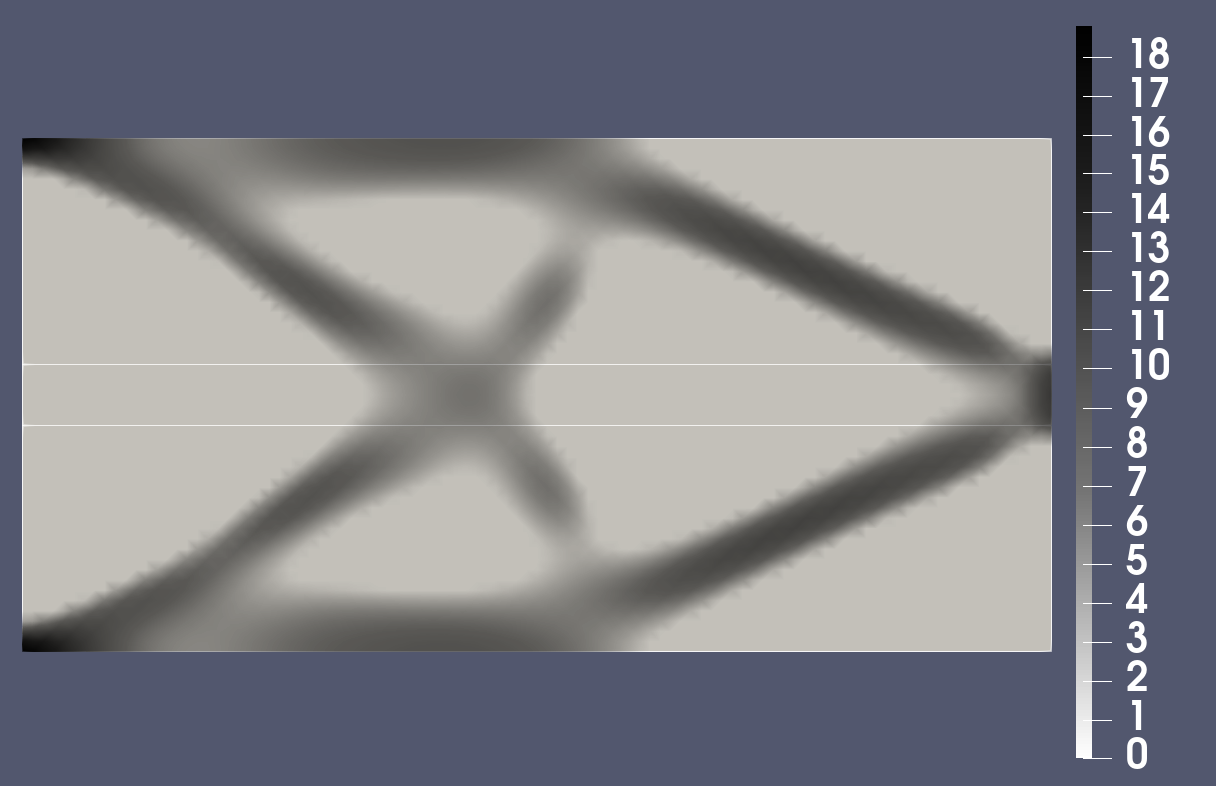}
      \caption*{$(\delta,\eta)=(10^{-2},10^{-2})$}
   \end{minipage}
   \h
   \begin{minipage}[b]{0.3\textwidth}
      \includegraphics[width=5cm]{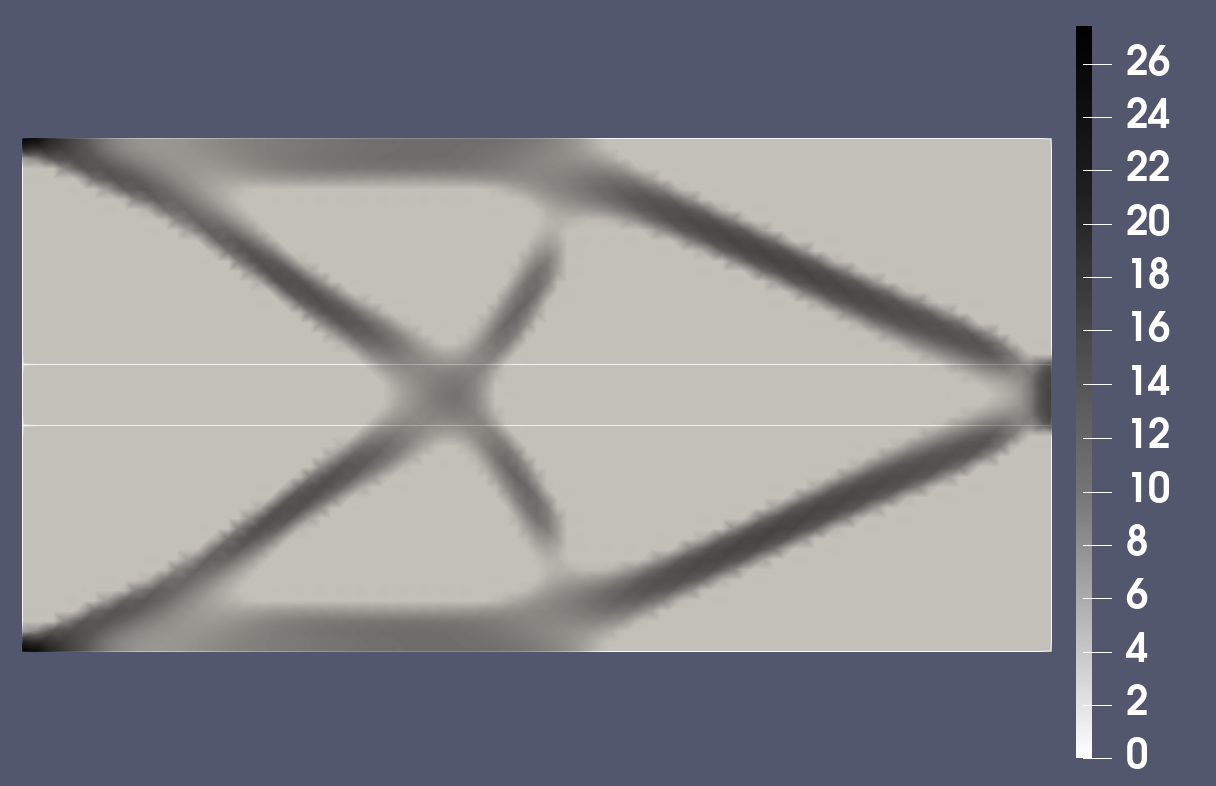}
      \caption*{$(\delta,\eta)=(10^{-2},10^{-3})$}
   \end{minipage}
   \h
   \begin{minipage}[b]{0.3\textwidth}
      \includegraphics[width=5cm]{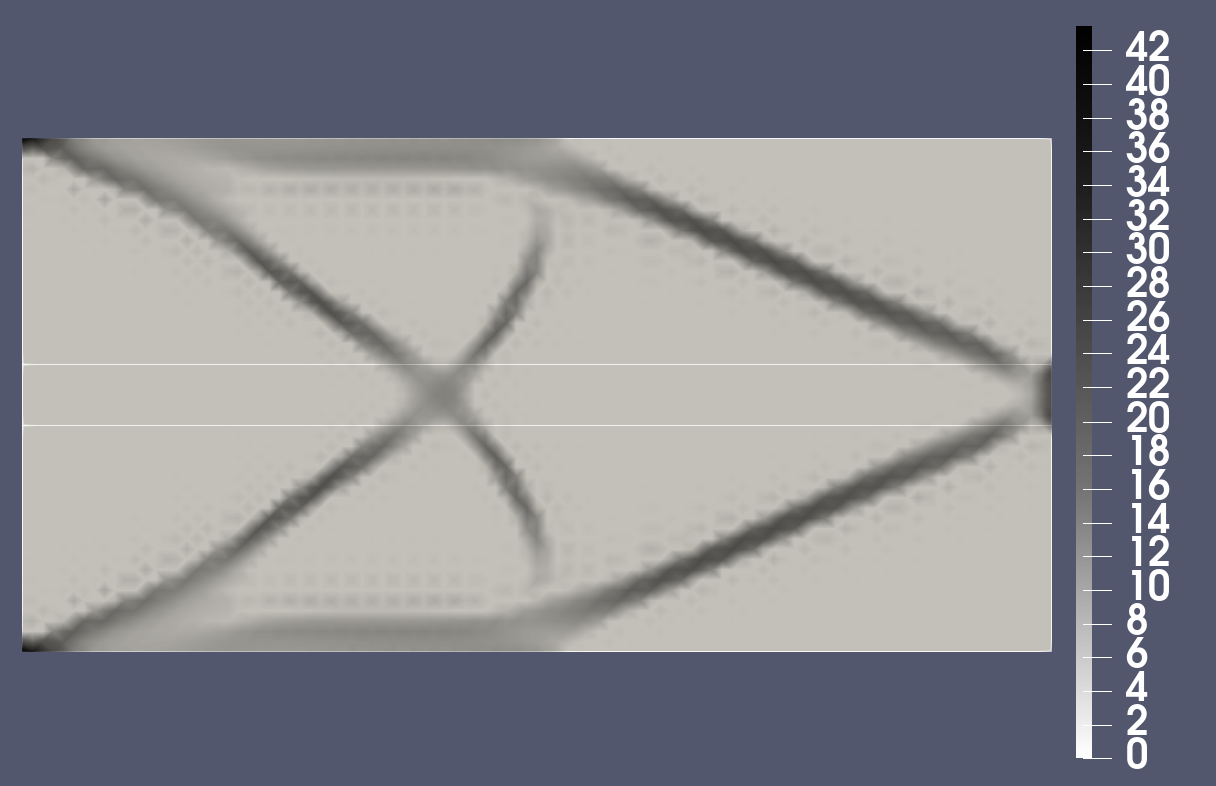}
      \caption*{$(\delta,\eta)=(10^{-2},10^{-4})$}
   \end{minipage}
\end{figure}

\begin{figure}[H]
   \centering
   \begin{minipage}[b]{0.3\textwidth}
      \includegraphics[width=5cm]{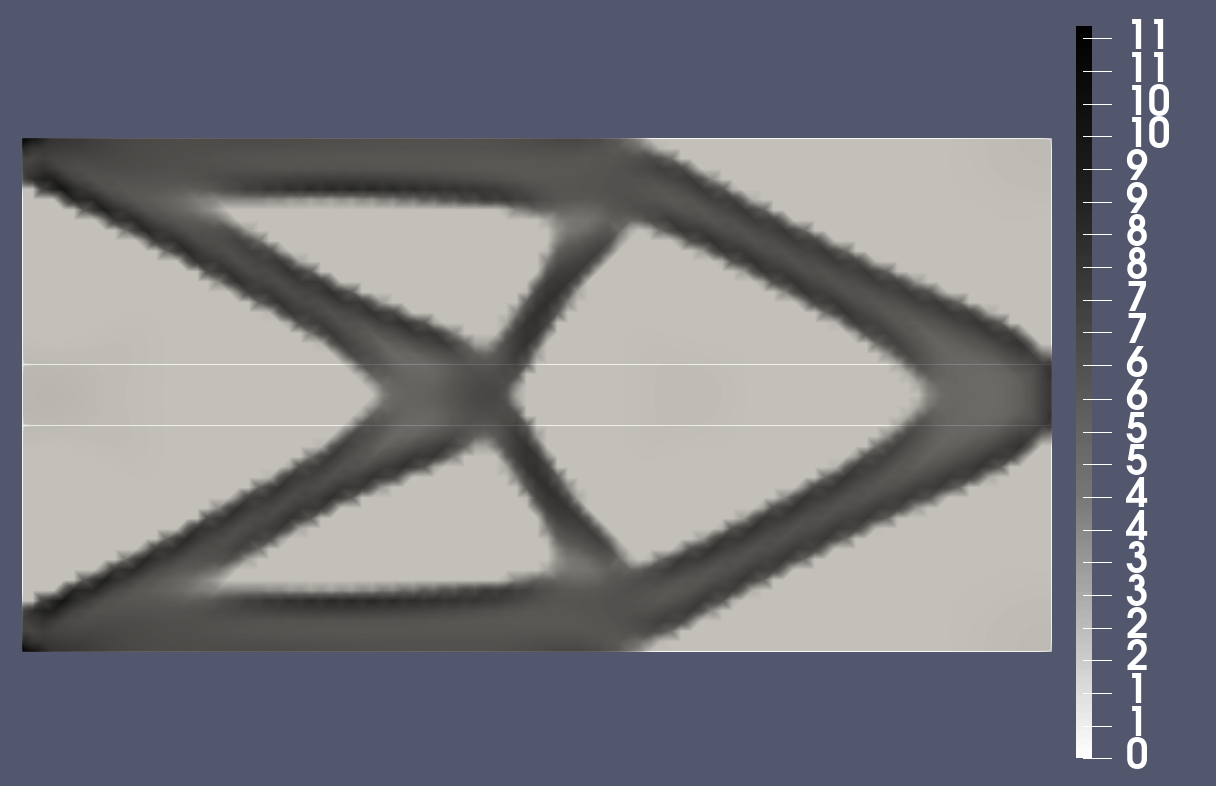}
      \caption*{$(\delta,\eta)=(10^{-3},10^{-2})$}
   \end{minipage}
   \h
   \begin{minipage}[b]{0.3\textwidth}
      \includegraphics[width=5cm]{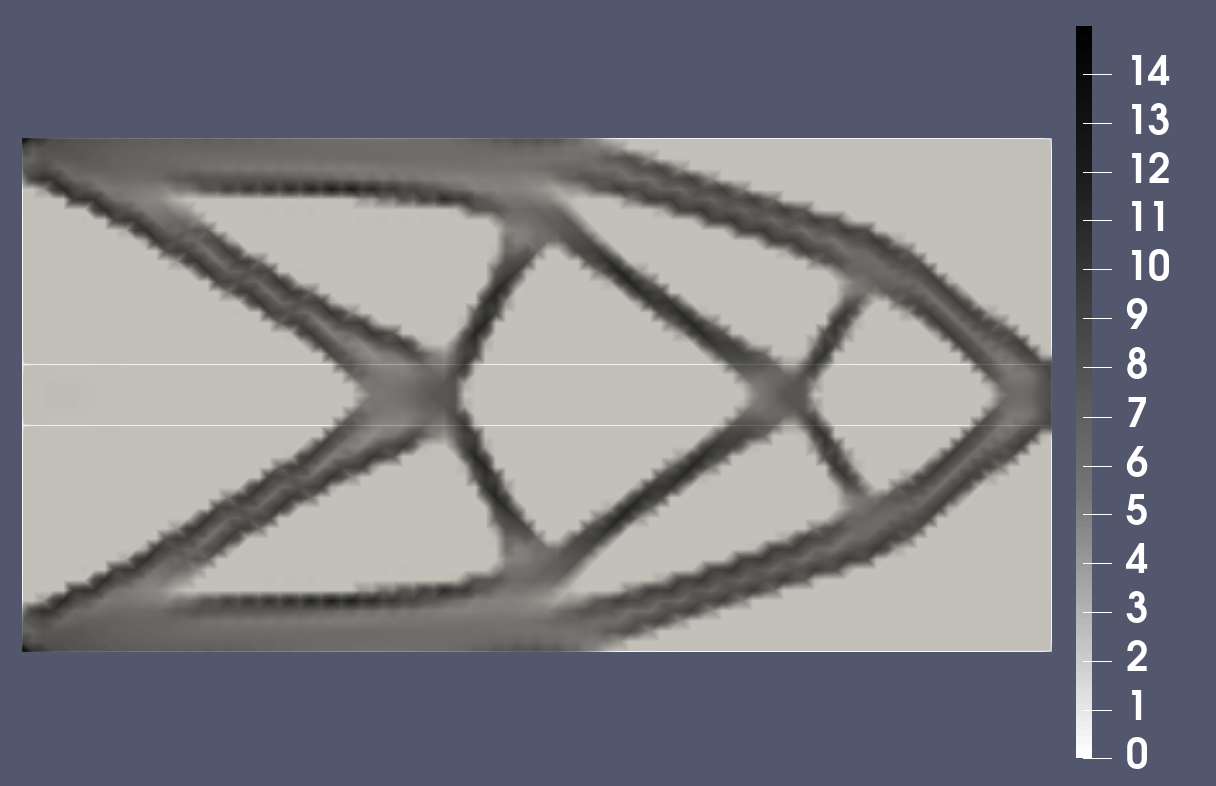}
      \caption*{$(\delta,\eta)=(10^{-3},10^{-3})$}
   \end{minipage}
   \h
   \begin{minipage}[b]{0.3\textwidth}
      \includegraphics[width=5cm]{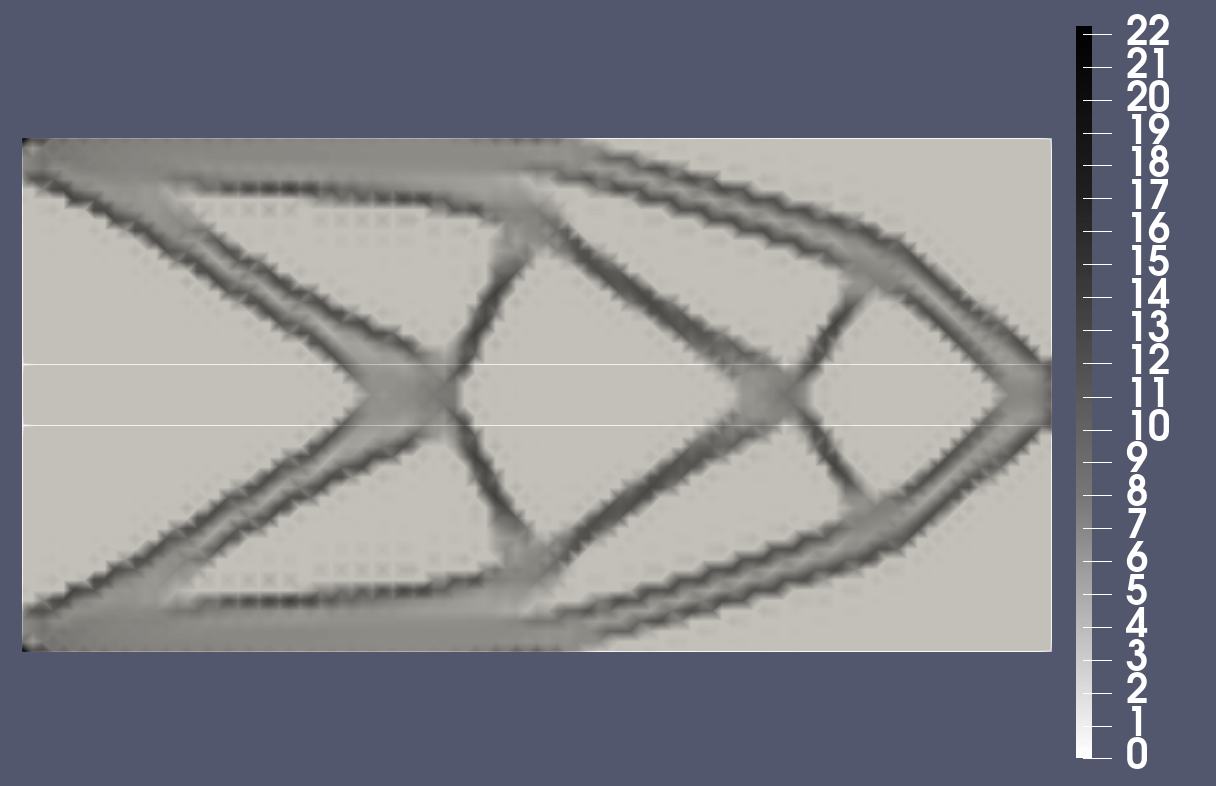}
      \caption*{$(\delta,\eta)=(10^{-3},10^{-4})$}
   \end{minipage}
\end{figure}

\begin{figure}[H]
   \centering
   \begin{minipage}[b]{0.3\textwidth}
      \includegraphics[width=5cm]{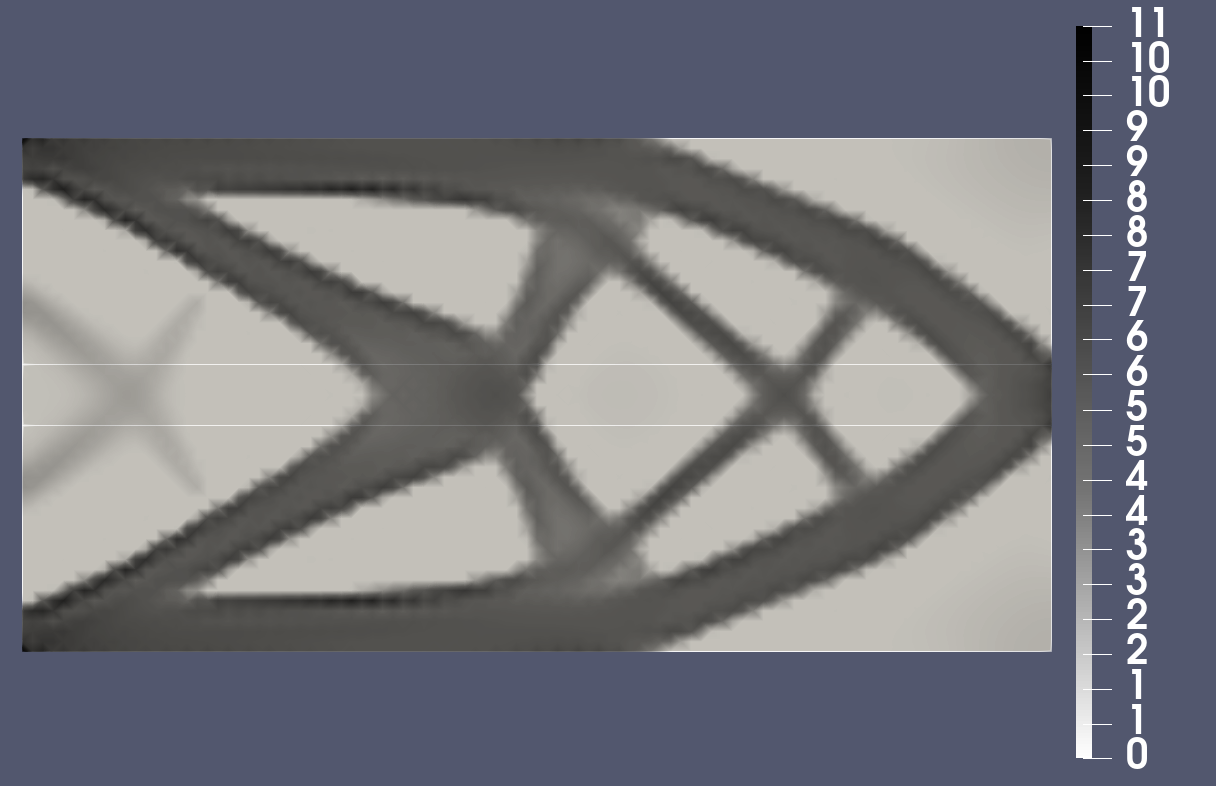}
      \caption*{$(\delta,\eta)=(10^{-4},10^{-2})$}
   \end{minipage}
   \h
   \begin{minipage}[b]{0.3\textwidth}
      \includegraphics[width=5cm]{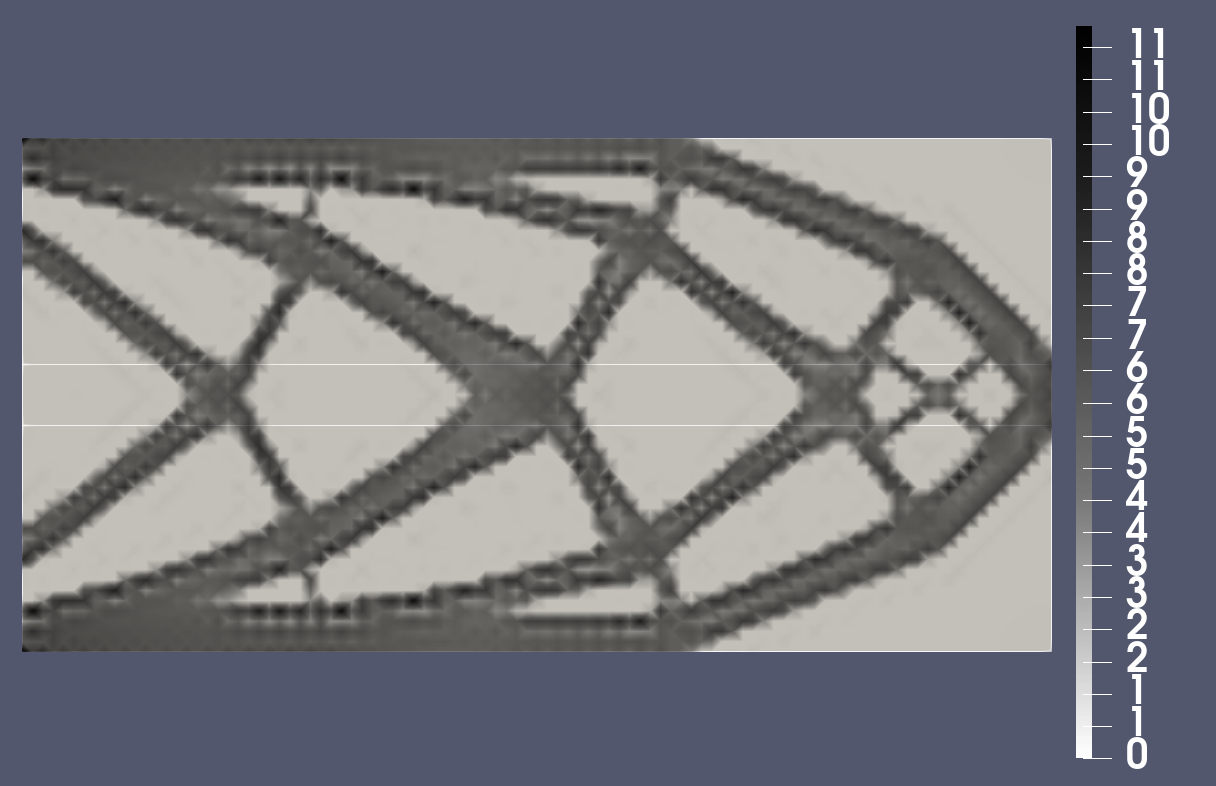}
      \caption*{$(\delta,\eta)=(10^{-4},10^{-3})$}
   \end{minipage}
   \h
   \begin{minipage}[b]{0.3\textwidth}
      \includegraphics[width=5cm]{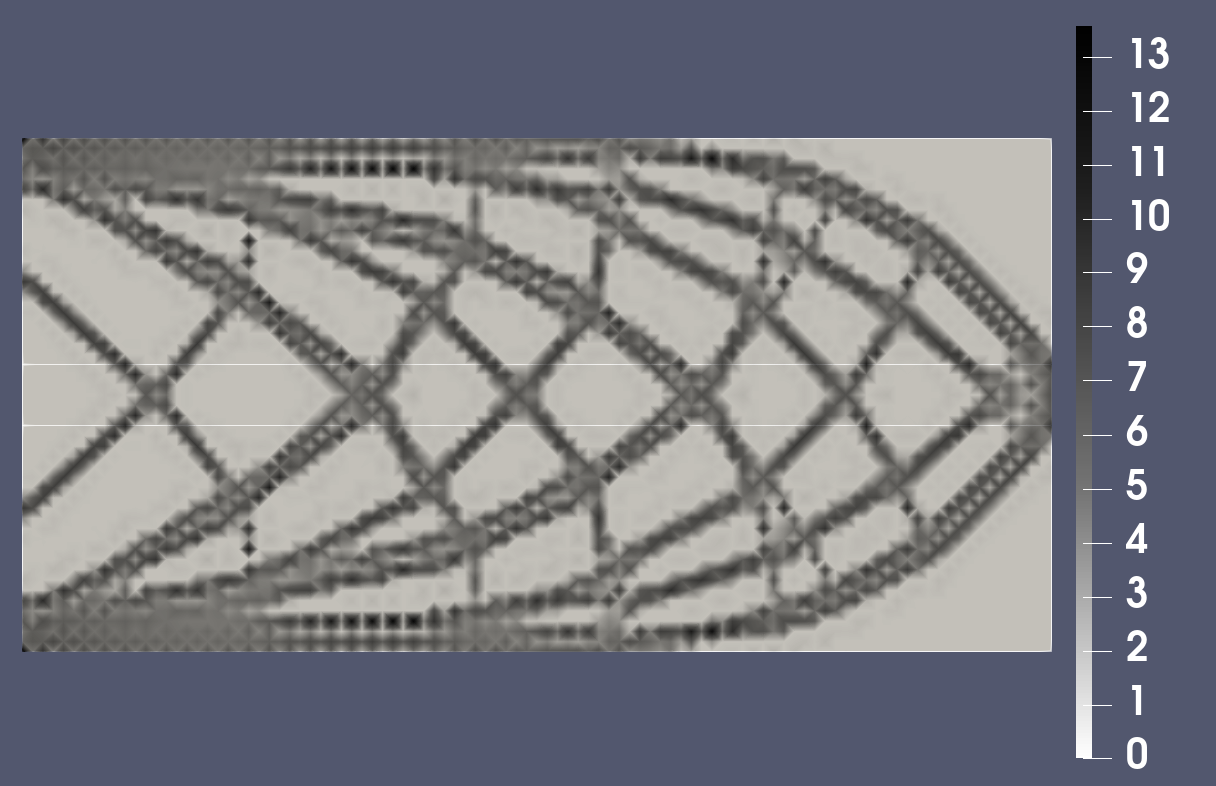}
      \caption*{$(\delta,\eta)=(10^{-4},10^{-4})$}
   \end{minipage}
\caption{Mass distribution}\label{figmass2}
\end{figure}
Figure \ref{figobj2} below shows how the objective function decreases. In this case, we set $\tau=3.0 \times 10^{-3}$ for $(\delta,\eta)=(10^{-2},10^{-2}), (10^{-2},10^{-3}), (10^{-2},10^{-4}), (10^{-3},10^{-2}), (10^{-3},10^{-3}), (10^{-3},10^{-4})$, $\tau=3.0 \times 10^{-4}$ for $(\delta, \eta)=(10^{-4},10^{-2}), (10^{-4},10^{-4})$ and $\tau=1.0 \times 10^{-3}$ for $(\delta,\eta)=(10^{-4},10^{-2})$.
\begin{figure}[H]
   \centering
   \begin{minipage}[b]{0.3\textwidth}
      \includegraphics[width=5cm]{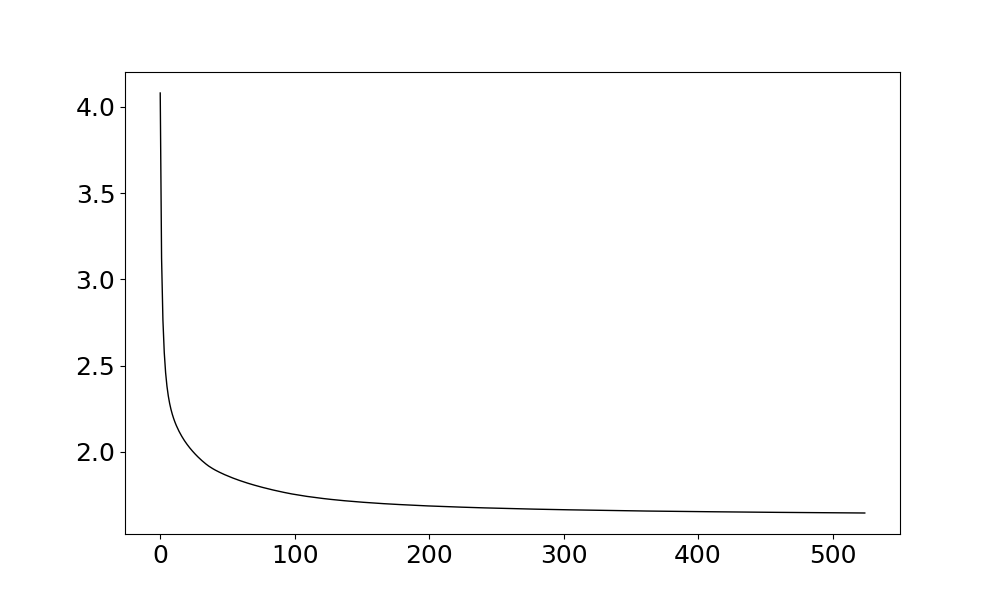}
      \caption*{$(\delta,\eta)=(10^{-2},10^{-2})$}
   \end{minipage}
   \h
   \begin{minipage}[b]{0.3\textwidth}
      \includegraphics[width=5cm]{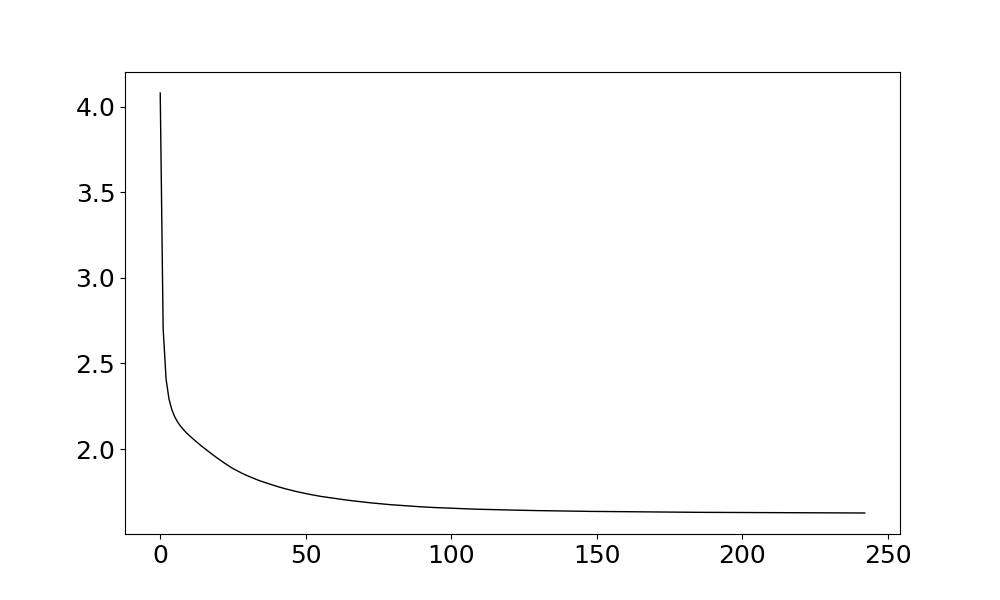}
      \caption*{$(\delta,\eta)=(10^{-2},10^{-3})$}
   \end{minipage}
   \h
   \begin{minipage}[b]{0.3\textwidth}
      \includegraphics[width=5cm]{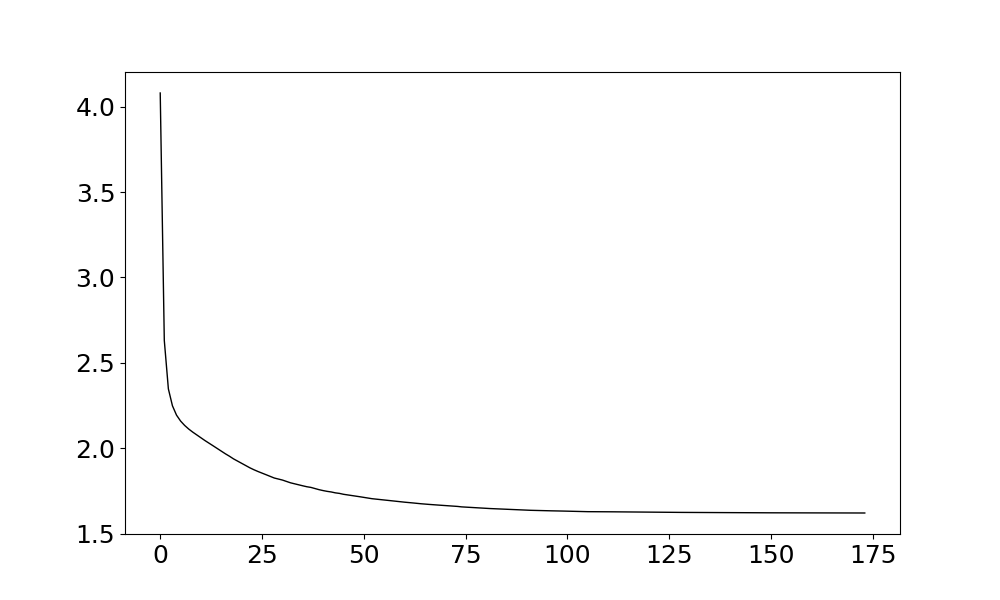}
      \caption*{$(\delta,\eta)=(10^{-2},10^{-4})$}
   \end{minipage}
\end{figure}

\begin{figure}[H]
   \centering
   \begin{minipage}[b]{0.3\textwidth}
      \includegraphics[width=5cm]{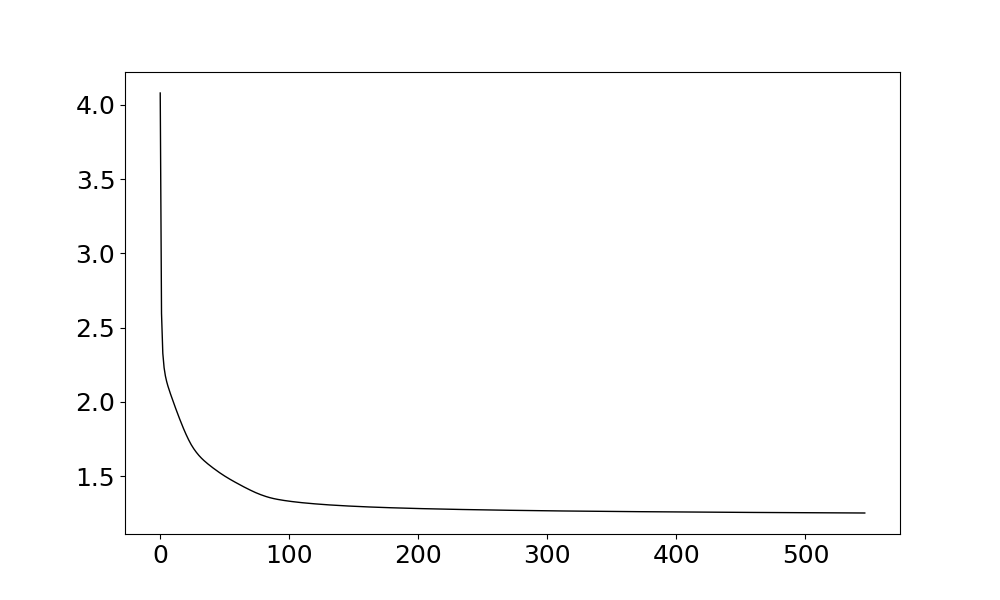}
      \caption*{$(\delta,\eta)=(10^{-3},10^{-2})$}
   \end{minipage}
   \h
   \begin{minipage}[b]{0.3\textwidth}
      \includegraphics[width=5cm]{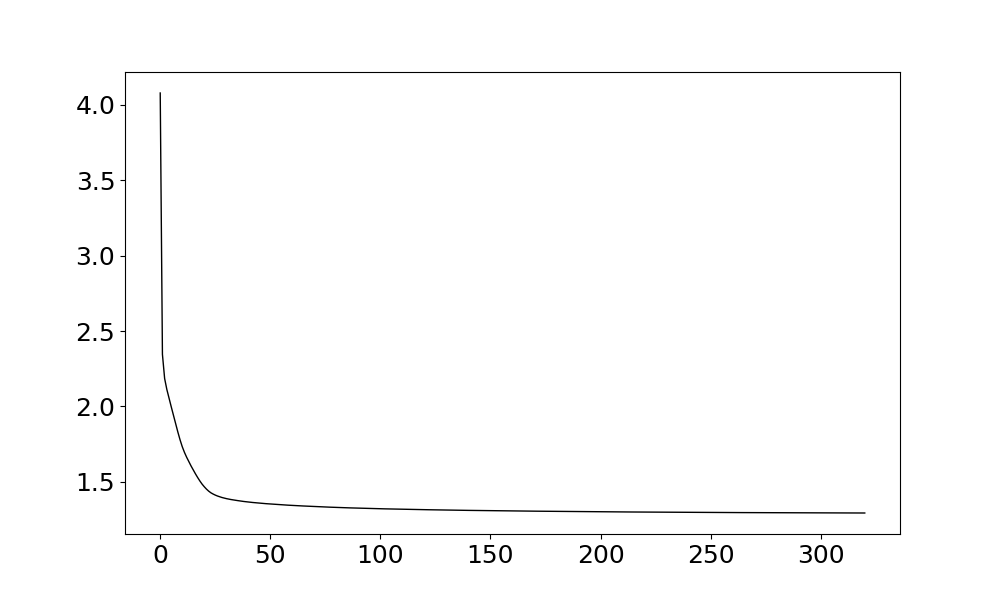}
      \caption*{$(\delta,\eta)=(10^{-3},10^{-3})$}
   \end{minipage}
   \h
   \begin{minipage}[b]{0.3\textwidth}
      \includegraphics[width=5cm]{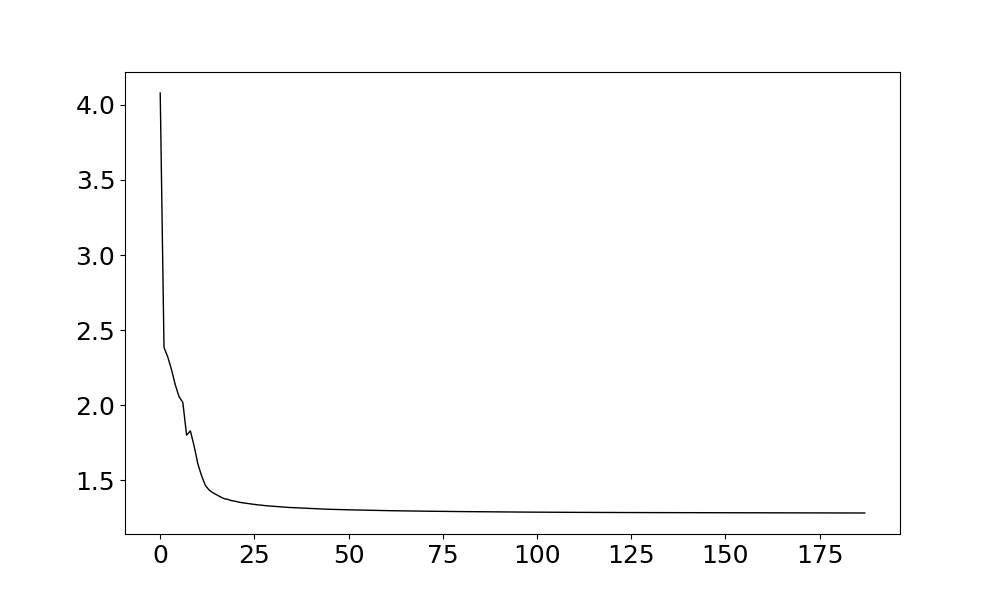}
      \caption*{$(\delta,\eta)=(10^{-3},10^{-4})$}
   \end{minipage}
\end{figure}

\begin{figure}[H]
   \centering
   \begin{minipage}[b]{0.3\textwidth}
      \includegraphics[width=5cm]{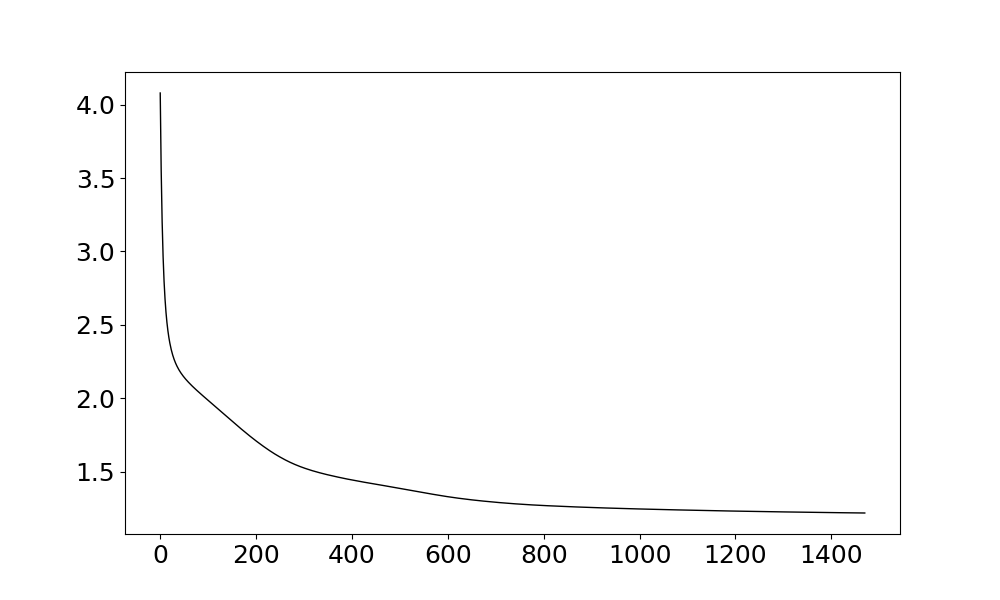}
      \caption*{$(\delta,\eta)=(10^{-4},10^{-2})$}
   \end{minipage}
   \h
   \begin{minipage}[b]{0.3\textwidth}
      \includegraphics[width=5cm]{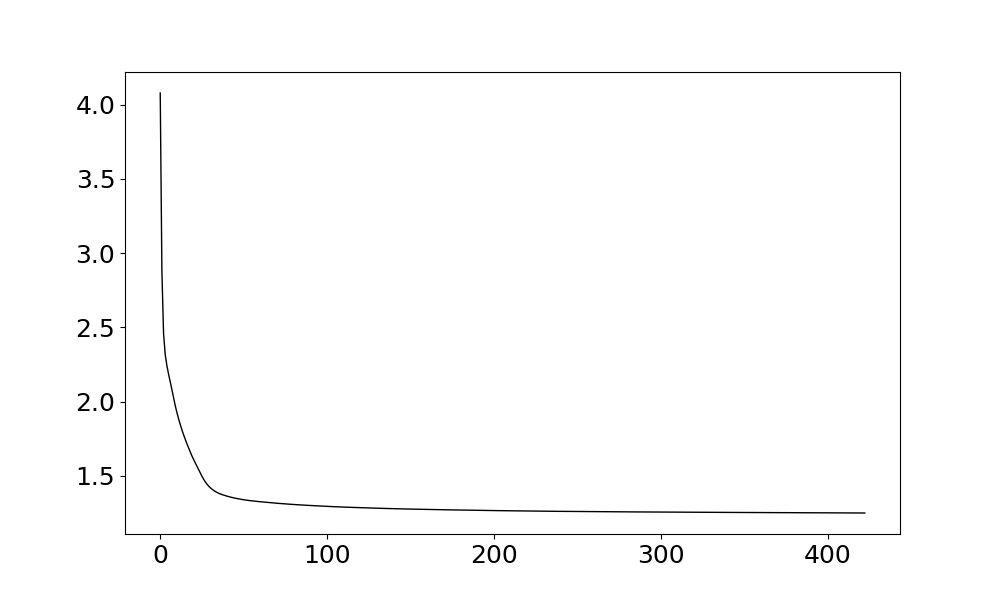}
      \caption*{$(\delta,\eta)=(10^{-4},10^{-3})$}
   \end{minipage}
   \h
   \begin{minipage}[b]{0.3\textwidth}
      \includegraphics[width=5cm]{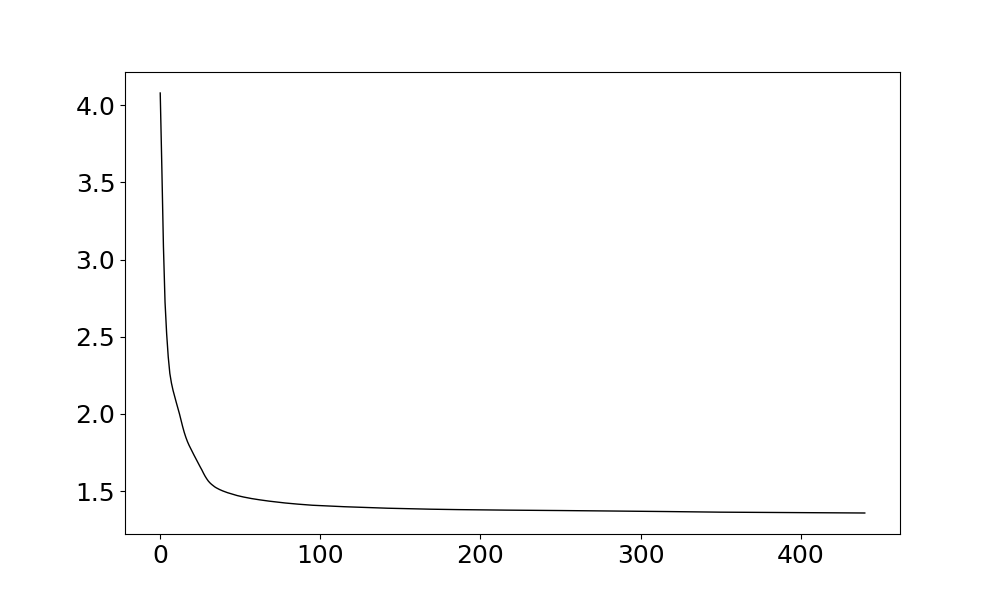}
      \caption*{$(\delta,\eta)=(10^{-4},10^{-4})$}
   \end{minipage}
\caption{Convergence history of the objective function}\label{figobj2}
\end{figure}
In the same way as Subsection \ref{heatresult}, Figure \ref{figerr2} descrives the logarithm of the relative error with respect to the initial mass.
\begin{figure}[H]
   \centering
   \begin{minipage}[b]{0.3\textwidth}
      \includegraphics[width=5cm]{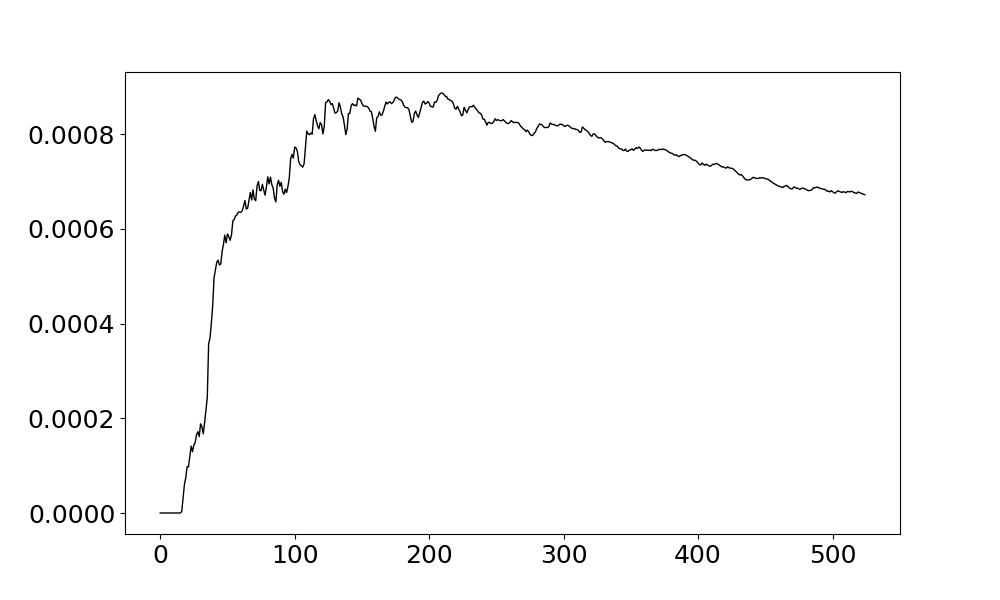}
      \caption*{$(\delta,\eta)=(10^{-2},10^{-2})$}
   \end{minipage}
   \h
   \begin{minipage}[b]{0.3\textwidth}
      \includegraphics[width=5cm]{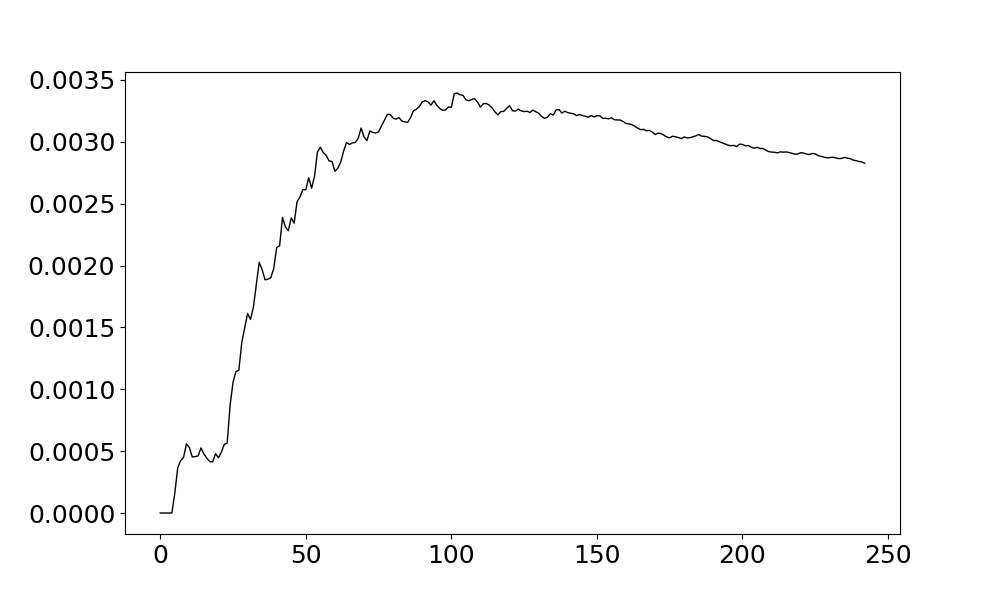}
      \caption*{$(\delta,\eta)=(10^{-2},10^{-3})$}
   \end{minipage}
   \h
   \begin{minipage}[b]{0.3\textwidth}
      \includegraphics[width=5cm]{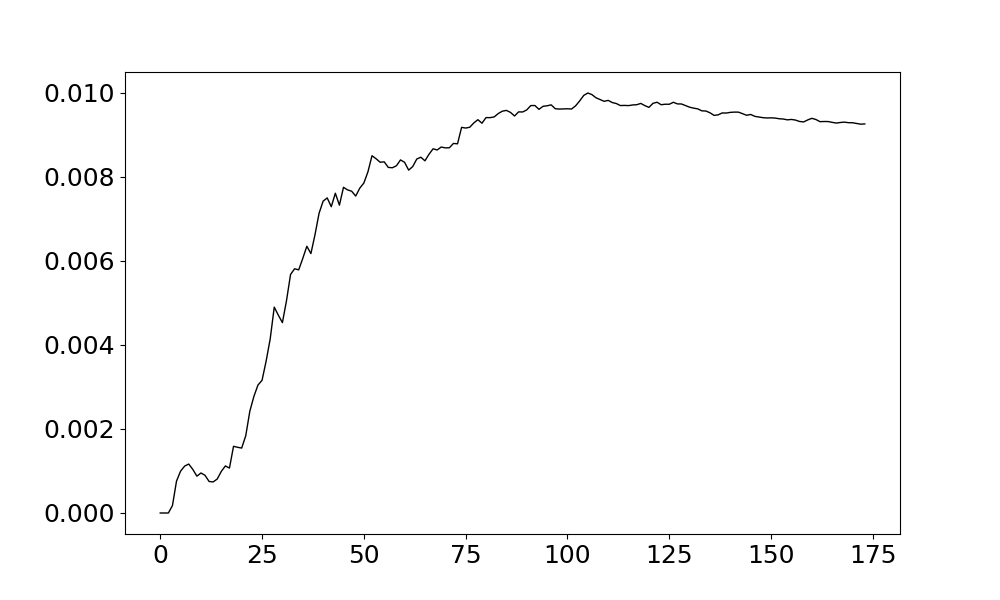}
      \caption*{$(\delta,\eta)=(10^{-2},10^{-4})$}
   \end{minipage}
\end{figure}

\begin{figure}[H]
   \centering
   \begin{minipage}[b]{0.3\textwidth}
      \includegraphics[width=5cm]{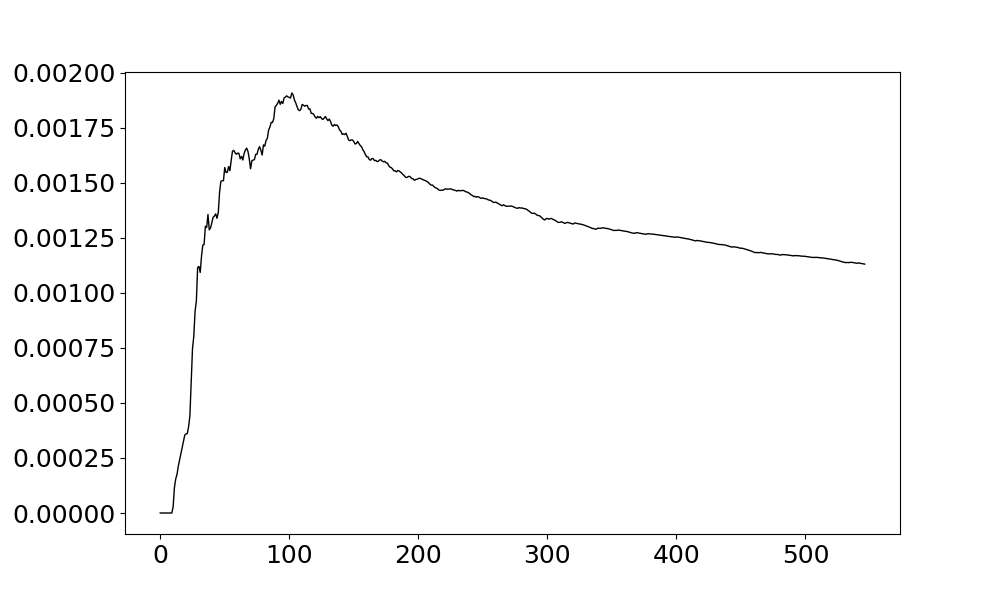}
      \caption*{$(\delta,\eta)=(10^{-3},10^{-2})$}
   \end{minipage}
   \h
   \begin{minipage}[b]{0.3\textwidth}
      \includegraphics[width=5cm]{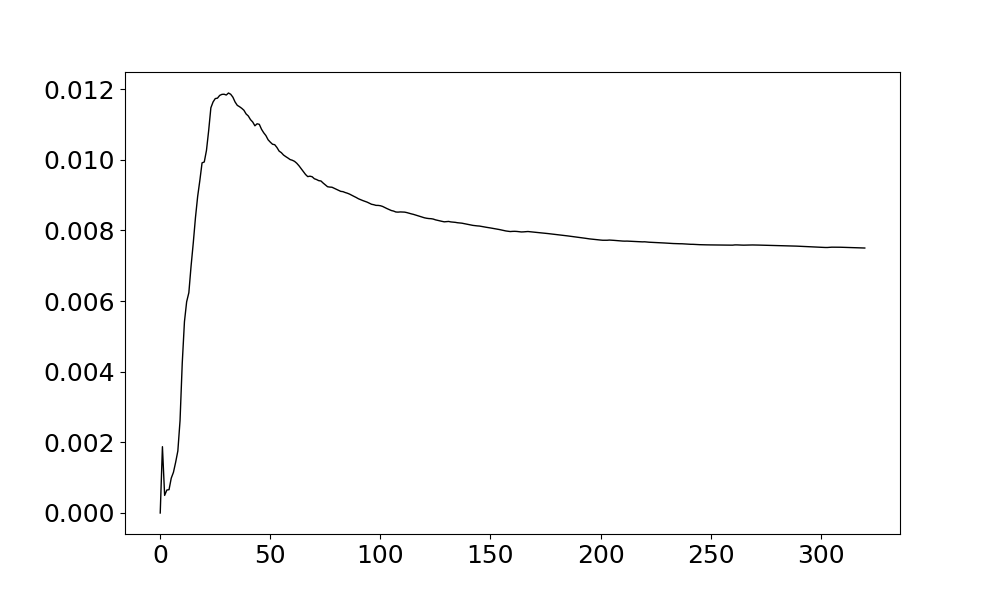}
      \caption*{$(\delta,\eta)=(10^{-3},10^{-3})$}
   \end{minipage}
   \h
   \begin{minipage}[b]{0.3\textwidth}
      \includegraphics[width=5cm]{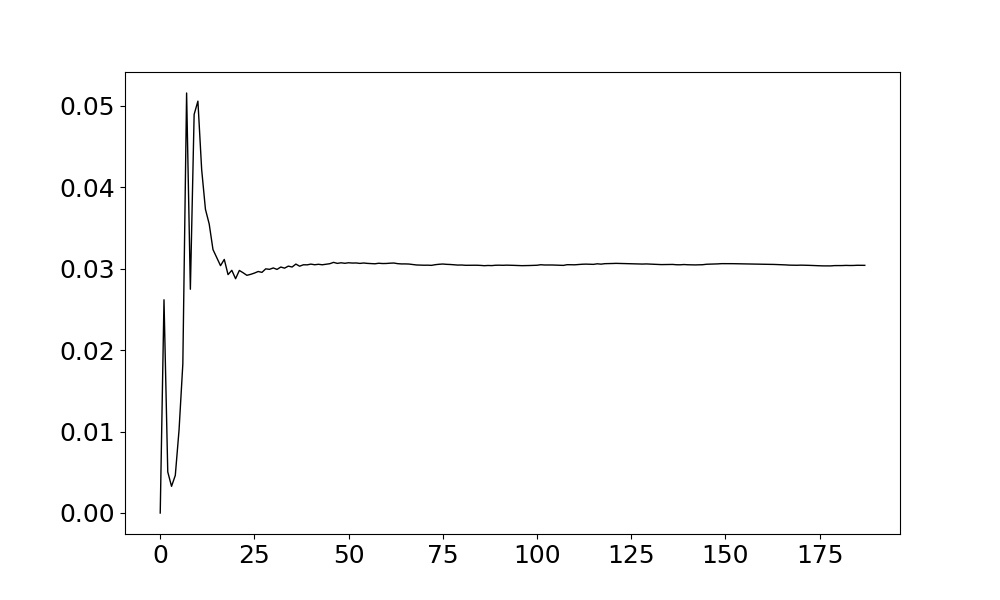}
      \caption*{$(\delta,\eta)=(10^{-3},10^{-4})$}
   \end{minipage}
\end{figure}

\begin{figure}[H]
   \centering
   \begin{minipage}[b]{0.3\textwidth}
      \includegraphics[width=5cm]{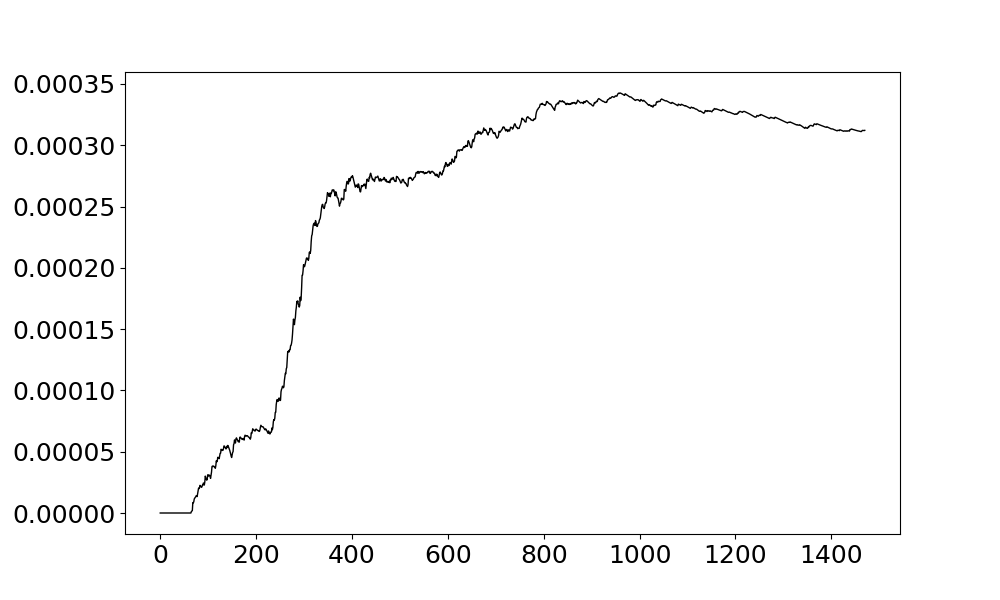}
      \caption*{$(\delta,\eta)=(10^{-4},10^{-2})$}
   \end{minipage}
   \h
   \begin{minipage}[b]{0.3\textwidth}
      \includegraphics[width=5cm]{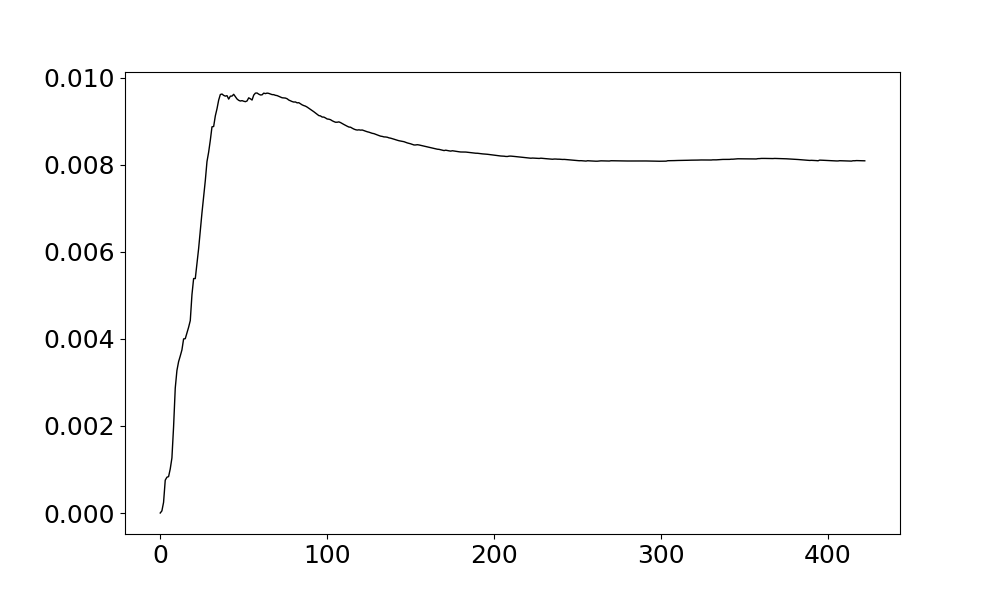}
      \caption*{$(\delta,\eta)=(10^{-4},10^{-3})$}
   \end{minipage}
   \h
   \begin{minipage}[b]{0.3\textwidth}
      \includegraphics[width=5cm]{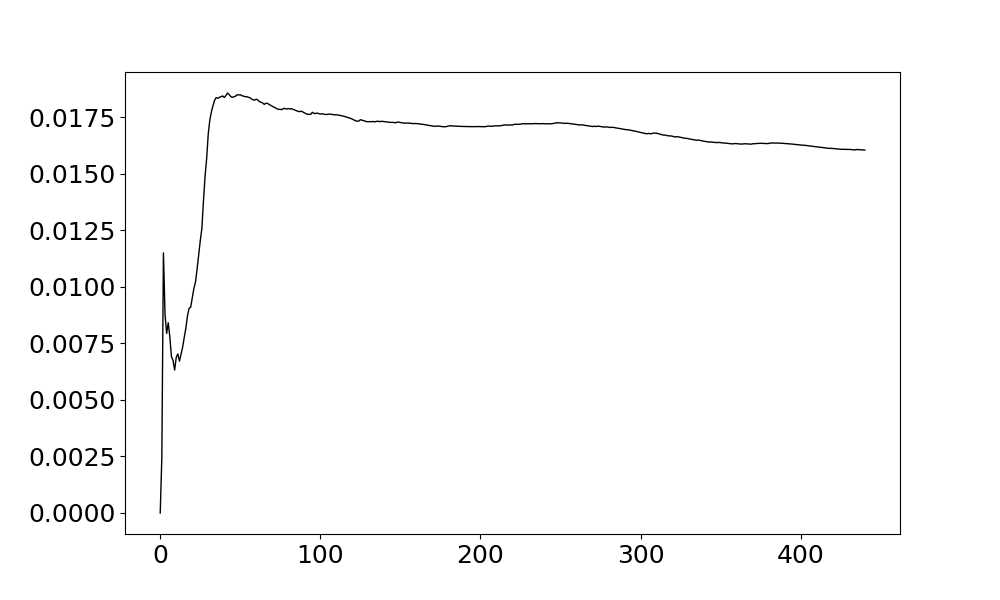}
      \caption*{$(\delta,\eta)=(10^{-4},10^{-4})$}
   \end{minipage}
\caption{Logarithmic relative error}\label{figerr2}
\end{figure}

\section*{CRediT authorship contribution statement}
F. Okazaki: Conceptualization, Software, Formal Analysis, Investigation, Writing -- Original Draft, Writing -- Review \& Editing (Lead).\\
T. Yamada: Supervision, Writing -- Review \& Editing (Supporting).
\section*{Acknowledgements}
The authors are grateful to A. Nakayasu, K. Sakai and K. Matsushima for their valuable advice.
\section*{Statements and Declarations}
{\bf Competing interests:} No conflict is related to this article, and the author has no relevant financial or non-financial interests to disclose.\\
{\bf Ethical Approval:} Not applicable to this article.\\
{\bf Funding:} This work was supported by JSPS KAKENHI Grant Number 24K22827 and 25K17264.\\
{\bf Data Availability Statements:} Data will be made available on request.
\begin{bibdiv}
\begin{biblist}




\bib{Allaire02}{book}{
   author={Allaire, G.},
   title={Shape Optimization by the Homogenization Method, in: Applied Mathematical Sciences},
   publisher={Springer-Verlag, New York},
   volume={146},
   number={},
   date={2002},
   pages={},
   issn={},
}
\bib{AC79}{article}{
   author={Allen, S. M.},
   author={Cahn, J. W.},
   title={A microscopic theory for antiphase boundary motion and its application to antiphase domain coarsening},
   journal={Acta Metallurgica},
   volume={27},
   number={6},
   date={1979},
   pages={1085--1095},
}

\bib{Ambrosio00}{book}{
   author={Ambrosio, L.},
   title={Lecture Notes on Optimal Transport Problems. In: Mathematical Aspects of Evolving Interfaces. Lecture Notes in Mathematics},
   publisher={Springer Berlin, Heidelberg},
   volume={1812},
   date={2003},
}
\bib{AGS21}{book}{
   author={Ambrosio, L.},
   author={Gigli, N.},
   author={Savar\'e, G.},
   title={Gradient Flows in Metric Spaces and in the Space of Probability Measures},
   publisher={Birkh\"{a}user},
   date={2005},
}

\bib{Bendsoe89}{article}{
   author={Bendsoe, M. P.},
   title={Optimal shape design as a material distribution
 problem}, 
   journal={Struct. Optim.},
   volume={1},
   number={},
   date={1989},
   pages={193--202},
   issn={},
}
\bib{BK88}{article}{
   author={Bendsoe, M. P.},
   author={Kikuchi, N.},
   title={Generating optimal topologies in structural design using a homogenization method},
   journal={Comput. Meth. Appl. Mech. Eng.},
   volume={71},
   number={},
   date={1989},
   pages={197--224},
   issn={},
}
\bib{BS99}{article}{
   author={Bendsøe, M. P.},
   author={Sigmund, O.},
   title={Material interpolation schemes in topology optimization},
   journal={Archive of Applied Mechanics},
   volume={69},
   date={1999},
   pages={635--654},
   issn={},
}

\bib{Bourdin01}{article}{
   author={Bourdin, B.},
   title={Filters in topology optimization},
   journal={International Journal for Numerical Methods in Engineering},
   volume={50},
   number={},
   date={2001},
   pages={2143--2158},
   issn={},
}
\bib{BBP97}{article}{
   author={Bouchitte, \'Guy},
   author={Buttazzo, Giuseppe},
   author={Seppecher, Pierre},
   title={Shape optimization solutions via Monge-Kantorovich equation},
   journal={C. R. Acad. Sci. Paris},
   volume={324-I},
   number={},
   date={1997},
   pages={1185--1191},
   issn={},
}
\bib{BB01}{article}{
   author={Bouchitte, \'G.},
   author={Buttazzo, G.},
   title={Characterization of optimal shapes and masses through Monge-Kantorovich equation},
   journal={J. Eur. Math. Soc.},
   volume={3},
   number={},
   date={2001},
   pages={139--168},
   issn={},
}

\bib{GT}{book}{
   author={Gilbarg, D.},
   author={Trudinger, N.},
   title={Elliptic Partial Differential Equations of Second Order, (2nd ed.)},
   publisher={Springer Berlin, Heidelberg},
   date={1983},
}

\bib{Kono25}{article}{
   author={Kono, H.},
   title={Well-posedness of second-order uniformly elliptic PDEs with Neumann conditions},
   journal={Applied Mathematics Letters},
   volume={171},
   date={2025},
   pages={p. 109670},
   issn={},
}

\bib{MM05}{article}{
   author={Michpr, P. M.},
   author={Mumford, D.},
   title={Vanishing geodesic distance on spaces of submanifolds and diffeomorphisms},
   journal={Doc. Math.},
   volume={10},
   date={2005},
   pages={217--245},
   issn={},
}
\bib{MM06}{article}{
   author={Michpr, P. M.},
   author={Mumford, D.},
   title={Riemannian geometries on spaces of plane curves},
   journal={J. Eur. Math. Soc.},
   volume={8},
   number={1},
   date={2006},
   pages={1--48},
   issn={},
}
\bib{MM07}{article}{
   author={Michpr, P. M.},
   author={Mumford, D.},
   title={An overview of the Riemannian metrics on spaces of curves using the Hamiltonian approach},
   journal={Appl. Comput. Harmon. Anal.},
   volume={23},
   number={1},
   date={2007},
   pages={74--113},
   issn={},
}
\bib{OkaYama25}{article}{
   author={Okazaki, Fumiya},
   author={Yamada, Takayuki},
   title={On sensitivities regarding shape and topology optimization as derivatives on Wasserstein spaces},
   journal={Tohoku Series of Mathematical Science, in press},
   volume={},
   date={2025},
   pages={},
   issn={},
}
\bib{Otto01}{article}{
   author={Otto, F},
   title={The geometry of dissipative evolution equations: the porous medium equation},
   journal={Comm. Partial Differential Equations},
   volume={26},
   date={2001},
   pages={101--174},
   issn={},
}
\bib{RZB92}{article}{
   author={Rozvany, G.},
   author={Zhou, M.},
   author={Birker, T.},
   title={Generalized shape optimization without homogenization},
   journal={Struct. Optim.},
   volume={4},
   number={},
   date={1992},
   pages={250--252},
   issn={},
}
\bib{Schulz14}{article}{
   author={Schulz, V. H.},
   title={A Riemannian view on shape optimization},
   journal={Found. Comput. Math.},
   volume={14},
   number={3},
   date={2014},
   pages={483--501},
   issn={},
}
\bib{SSW15}{article}{
   author={Schulz, V. H.},
   author={Siebenborn, M.},
   author={Welker, K.},
   title={Structured inverse modeling in parabolic diffusion processess},
   journal={SIAM J. Control. Optim.},
   volume={53},
   number={6},
   date={2015},
   pages={3319--3338},
   issn={},
}
\bib{SSW16}{article}{
   author={Schulz, V. H.},
   author={Siebenborn, M.},
   author={Welker, K.},
   title={Efficient PDE constrained shape optimization based on Steklov-Poincaré type metrics},
   journal={SIAM J. Optim.},
   volume={26},
   number={4},
   date={2016},
   pages={2800--2819},
   issn={},
}
\bib{TNK10}{article}{
   author={Takezawa, Akihiko},
   author={Nishiwaki, Shinji},
   author={Kitamura, Mitsuru},
   title={Shape and topology optimization based on the phase field method and sensitivity analysis},
   journal={J. Comput. Phys.},
   volume={229},
   number={7},
   date={},
   pages={2697--2718},
   issn={},
}
\bib{Villani03}{book}{
   author={Villani, C.},
   title={Topics in optimal transportation},
   publisher={Graduate Studies in Mathematics, American Mathematical Society, Providence, RI},
   volume={58},
   number={},
   date={2003},
   pages={},
   issn={},
}
\bib{WangZhou04}{article}{
   author={Wang, Michael Yu},
   author={Zhou, Shiwei},   
   title={Phase field: a variational method for structural topology optimization},
   journal={Comput. Model. Eng. Sci.},
   volume={6},
   number={2},
   date={2004},
   pages={547--566},
   issn={},
}
\bib{Welker21}{article}{
   author={Welker, K.},
   title={Suitable Spaces for Shape Optimization},
   journal={Appl. Math. Optim.},
   volume={84},
   number={1},
   date={2021},
   pages={869--902},
   issn={},
}
\bib{YCL94}{article}{
   author={Yang, R.J.},
   author={Chen, C.J.},
   author={Lee, C.H.},
   title={Optimal topology design using linear programming},
   journal={Comput. Struct.},
   volume={52},
   number={2},
   date={1994},
   pages={265--276},
   issn={},
}
\bib{Zhou05}{article}{
   author={Zhou, Shiwei},
   title={Phase Field Model for Optimization of Multi-material Structural Topology in Two and Three Dimensions},
   journal={Ph.D. Thesis, The Chinese University of Hong Kong, Hong Kong China},
   volume={},
   number={},
   date={2005},
   pages={},
   issn={},
}
\bib{ZhouWang06}{article}{
   author={Zhou, Shiwei},
   author={Wang, Michael Yu},
   title={3D Multi-Material Structural Topology Optimization with the Generalized Cahn-Hilliard Equations},
   journal={CMES},
   volume={16},
   number={2},
   date={2006},
   pages={83--102},
   issn={},
}
\bib{ZhouWang07}{article}{
   author={Zhou, Shiwei},
   author={Wang, Michael Yu},
   title={Multimaterial structural topology optimization with a generalized Cahn–Hilliard model of multiphase transition},
   journal={Structural and Multidisciplinary Optimization},
   volume={33},
   date={2007},
   pages={89--111},
   issn={},
}
\end{biblist}
\end{bibdiv}

\end{document}